\documentclass[11pt]{article}

\usepackage[a4paper,margin=1in]{geometry}
\usepackage{amsmath,amssymb,amsthm,mathtools}
\usepackage{bm}
\usepackage{enumitem}
\usepackage[colorlinks=true,linkcolor=blue,citecolor=blue,urlcolor=blue]{hyperref}
\usepackage{natbib}
\usepackage{graphicx,booktabs,tabularx}
\usepackage{subcaption}
\newtheorem{theorem}{Theorem}[section]
\newtheorem{lemma}[theorem]{Lemma}
\newtheorem{remark}[theorem]{Remark}
\newtheorem{assumption}[theorem]{Assumption}
\newtheorem{definition}[theorem]{Definition}

\usepackage{appendix}

\title{Equilibrium Singular Dividend Control with
Regime Switching and Bankruptcy under
Non-Exponential Discounting}
\author{Xiaochen Chen \and Guohui Guan \and Zongxia Liang \and Xiaodong Luo}
\date{}

\begin{document}
\maketitle

\begin{abstract}
This paper studies an infinite-horizon equilibrium dividend problem for a firm whose surplus follows a diffusion with observable Markov regime switching and is absorbed at bankruptcy. Dividend payments are modeled as singular controls, and non-exponential discounting gives rise to time inconsistency. We formulate equilibrium control laws and establish a verification theorem for a general finite-state model with surplus-dependent coefficients. For a two-state model with constant positive drifts and volatilities and bi-exponential discounting, we prove the existence of a regime-dependent equilibrium barrier strategy for arbitrary positive switching intensities. The proof relies on uniform estimates for a system of coupled boundary-value problems and the Poincaré–Miranda theorem. We also verify that the resulting strategy satisfies the equilibrium variational inequalities. Numerical results show that greater dispersion in discount rates, with the initial average discount rate held fixed, raises both barriers. The effect of switching intensity can be nonmonotone and may reverse the ordering of the barriers.
\end{abstract}

\noindent\textbf{Keywords:}
Time inconsistency; Singular dividend control;
Regime switching; Equilibrium barrier strategy;
Bankruptcy; Non-exponential discounting.

\medskip

\noindent\textbf{2020 Mathematics Subject Classification:}
Primary 93E20; 
Secondary 49L20, 35R35, 91G50

\section{Introduction}
\label{sec:introduction}

Dividend distribution is a classical problem in insurance and corporate finance, where a firm must balance immediate payments to shareholders against retaining capital for future operations and protection against adverse fluctuations. The classical dividend problem, originating with \citet{deFinetti1957}, aims to maximize expected discounted dividends before ruin. For diffusion models, foundational results were established by \citet{JeanblancShiryaev1995} and \citet{AsmussenTaksar1997}. When dividend payments are unrestricted in rate and incur no fixed transaction costs, they can be modeled as a nondecreasing singular control. In many classical models, the optimal strategy is characterized by a barrier: surplus is retained below a threshold, while any excess is paid out immediately.
\vskip 5pt
A firm's operating environment may change over time, affecting both the profitability and volatility of its cash flows. Markov regime-switching models provide a tractable framework for capturing such changes, following \citet{Hamilton1989}. In the context of dividend optimization, \citet{sotomayor2011classical} study classical and singular control problems under regime switching, while \citet{jiang2012optimal} characterize optimal dividend policies for a Markov-modulated diffusion. These studies show that optimal dividend barriers may depend on the prevailing regime. In particular, a switch to a regime with a lower barrier may trigger an immediate lump-sum dividend payment, even though the surplus itself does not jump at the switching time.
\vskip 5pt
Discounting also plays an important role in dividend decisions. While exponential discounting is widely used, a single discount rate may not adequately reflect heterogeneous shareholder preferences. \citet{zhao2014dividend} consider non-exponential discounting in dividend problems to account for differences in shareholder preferences, while \citet{ebert2020weighted} show that aggregating exponential discount functions with different rates leads to weighted discounting. For example, two groups of shareholders with different discount rates give rise to
\begin{equation}
    \beta(t)=\theta e^{-\delta_1 t}
    +(1-\theta)e^{-\delta_2 t},
    \qquad 0<\theta<1,\quad 0<\delta_1<\delta_2.
    \label{eq:intro_discount}
\end{equation}
When the same discount function is applied to future payments at each decision date, preferences are generally time inconsistent. In particular, the relative valuation of future dividends changes as time passes, and thus a policy that is optimal at one decision date may no longer be optimal when reconsidered later.
\vskip 5pt
Without commitment, time-inconsistent decision making can be formulated as a game among successive selves. The idea of consistent planning dates back to \citet{Strotz1955}, while \citet{HarrisLaibson2001} study equilibrium behavior under hyperbolic discounting. In continuous time, \citet{EkelandPirvu2008} study investment and consumption without commitment, and \citet{BjorkKhapkoMurgoci2021} provide a general framework for time-inconsistent stochastic control. An equilibrium policy requires that no current self can improve its objective, to first order, by deviating over a short time interval, given that future selves follow the prescribed policy. In a regime-switching setting, \citet{ChenGuanLiang2025} study portfolio selection with regime-dependent market coefficients and beliefs-dependent utilities, where time inconsistency arises from the aggregation of certainty equivalents across terminal regimes. They derive the corresponding extended HJB equations and establish a verification theorem for equilibrium investment strategies.
\vskip 5pt
Several studies have examined time-inconsistent dividend problems. \citet{zhao2014dividend} derive equilibrium dividend strategies under non-exponential discounting with bounded dividend rates, while \citet{LiLiZeng2016} consider a dual risk model. \citet{ChenLiZeng2014} and \citet{ZhuSiuYang2020} study dividend problems under stochastic quasi-hyperbolic discounting, including singular dividend strategies. \citet{ChristensenLindensjo2022} investigate precommitment and consistent planning in a dividend problem with moment constraints and impulse payments. These studies consider different sources of time inconsistency and forms of dividend control. In the case of unrestricted singular controls, the equilibrium formulation must also account for instantaneous payments and the subsequent evolution of the controlled surplus.
\vskip 5pt
A general framework for this purpose is developed by \citet{liang2024equilibria}. 
They formulate equilibrium singular control through control laws specified by waiting and action regions, with the corresponding controlled process constructed through a Skorokhod problem. A local deviation is followed by a regenerated control law starting from the state reached after the deviation. This formulation incorporates the responses of future selves into the policy, rather than fixing a single cumulative control process in advance. They establish verification results and study the existence of equilibria for a finite-horizon singular control problem. Their framework provides the basis for the equilibrium formulation in the present paper, while the Markov-modulated dividend model of \citet{jiang2012optimal} provides the regime-switching foundation.
\vskip 5pt
Despite these developments, establishing the existence of equilibrium singular dividend policies in an infinite-horizon regime-switching model remains challenging. Non-exponential discounting requires auxiliary payoff functions, while regime-dependent intervention boundaries lead to coupled continuation equations across different regions. Moreover, solving the free-boundary equations alone does not guarantee an equilibrium, because the resulting candidate must also satisfy the variational inequalities, together with the admissibility and transversality conditions on an infinite horizon. This paper studies an infinite-horizon equilibrium dividend problem with regime switching, non-exponential discounting, and bankruptcy. The firm's surplus follows a diffusion whose drift and volatility depend on an observable finite-state Markov chain. Dividends are represented by a nondecreasing cumulative process $\xi$, which may include lump-sum payments. For initial surplus $x$ and regime $i$, the decision maker evaluates a dividend policy through
\begin{equation}
    J(x,i;\xi)
    =\mathbb{E}_{x,i}\!\left[
        \int_{[0,\tau^{\xi}]}\beta(t)\,d\xi_t
      \right],
    \label{eq:intro_objective}
\end{equation}
where $\tau^{\xi}$ is the bankruptcy time and the surplus is absorbed at zero. In the general formulation, the drift and volatility may also depend on the surplus, and $\beta$ is a positive, continuously differentiable, nonincreasing discount function with $\beta(0)=1$. Adapting the control-law approach of \citet{liang2024equilibria}, we seek a stationary equilibrium that is consistent with the decisions of successive selves. After a short admissible deviation, the prescribed law is restarted from the resulting surplus and the observed regime. This formulation also allows a regime transition to trigger an immediate dividend payment when required by the action region of the new regime.
\vskip 5pt
The combination of singular intervention, regime switching, and non-exponential discounting presents several analytical difficulties.
The current equilibrium value alone does not describe how future payments are reweighted after a delay; thus a family of auxiliary payoff functions is needed. 
Regime transitions couple these functions across states, while different dividend barriers give rise to different continuation intervals. 
Moreover, in the bi-exponential case, the two discount components must be evaluated under the same dividend law.
It is their weighted sum, rather than each component separately, that satisfies the equilibrium high-contact conditions at the policy barriers. 
Solving the resulting boundary equations is therefore only part of the argument. 
The candidate must also satisfy the equilibrium variational inequalities throughout the state space, and its reflected surplus process must meet the admissibility and infinite-horizon integrability requirements.
\vskip 5pt
The main contributions of this paper are as follows.
\begin{enumerate}
    \item We establish a verification theorem for equilibrium singular dividend control in a general finite-state regime-switching model with surplus-dependent coefficients. 
    The theorem uses auxiliary delayed-payoff functions and a coupled system of equilibrium variational inequalities. 
    It supplies sufficient analytic and probabilistic conditions under which a candidate control law is an equilibrium in the presence of the absorbing bankruptcy boundary.

    \item For a two-state model with constant positive drifts and volatilities and the bi-exponential discount function in \eqref{eq:intro_discount}, we reduce the barrier construction to a system of four linear ordinary differential equations, coupled across regimes, together with two free-boundary conditions. 
    We first establish uniqueness of the component payoffs for fixed positive barriers and continuity of the boundary residuals, including at coincident barriers.
    Small- and large-barrier sign estimates then allow us to apply the Poincar\'e--Miranda theorem and obtain a positive barrier pair. 
    This construction permits arbitrary positive transition intensities and does not require a prescribed ordering of the two barriers.
    
    \item We verify that the resulting candidate satisfies the full equilibrium conditions. 
    In particular, the equilibrium value is concave in the surplus, its marginal value is strictly greater than one in each waiting region, and the generator inequality holds in each action region. 
    We also establish the admissibility of the reflected dividend policy and the integrability and transversality properties needed on the infinite horizon. 
    Together, these results prove the existence of a regime-dependent equilibrium barrier policy in the two-state bi-exponential model.
    
    \item We investigate how shareholder patience and the economic environment affect the equilibrium barriers numerically. 
    The experiments distinguish changes in the average discount rate from changes in discount-rate dispersion at a fixed initial average rate. 
    We also examine transition speeds, regime composition, profitability, and volatility. 
    In the parameter ranges considered, switching speed can have a nonmonotone effect on the barriers and can change their ordering, illustrating how expectations about future regimes interact with current dividend incentives.

\end{enumerate}
\vskip 5pt
The remainder of the paper is organized as follows. 
Section~\ref{sec:model-formulation} formulates the model and the equilibrium concept. 
Section~\ref{sec:verification} presents the general verification theorem. 
Section~\ref{sec:special-case} proves the existence of the equilibrium for the two-state bi-exponential model. 
Section~\ref{sec:numerical-illustrations} presents the numerical results.

\section{Model formulation}
\label{sec:model-formulation}

Let $(\Omega,\mathcal F,\mathbb F,\mathbb P)$ be a complete filtered probability space supporting a standard Brownian motion $B = \{B_t\}_{t\ge0}$ and an independent continuous-time Markov chain $\varepsilon=\{\varepsilon_t\}_{t\ge0}$ with state space $S=\{1,\cdots,N\}$ and generator $Q=(q_{ij})_{i,j\in S}$, where $q_{ij}\geq0\,(i\neq j)$, $q_{ii}=-\sum_{j\neq i}q_{ij}$.
The filtration $\mathbb F=\{\mathcal F_t\}_{t\geq0}$ is the usual augmentation of the natural filtration of $(B,\varepsilon)$ and represents the information available to the firm.

\subsection{Surplus dynamics and objective function}
\label{subsec:surplus-objective}

We consider a firm whose operating cash flows depend on the observable regime $\varepsilon$. 
The regime describes macroeconomic or industry conditions, while $B$ captures cash-flow fluctuations within each regime.
Dividend decisions do not affect the transition intensities. This setting follows the regime-switching dividend models of \citet{sotomayor2011classical} and \citet{jiang2012optimal}.

A singular control $\xi = \{\xi_t\}_{t\ge0}$ is an $\mathbb F$-adapted, nondecreasing, c\`adl\`ag process with $\xi_{0-}=0$. Here $\xi_t$ is the cumulative dividend paid by time $t$, and an initial payment $\Delta\xi_0=\xi_0$ is allowed.
For initial surplus $x\geq0$, the firm's controlled surplus $X = \{X_t\}_{t\ge0}$ satisfies
\begin{equation}
\label{eq:surplus}
X_t^\xi
=x+\int_0^{t\wedge\tau^\xi}
       \mu_{\varepsilon_{r}}(X_{r}^\xi)\,dr
  +\int_0^{t\wedge\tau^\xi}
       \sigma_{\varepsilon_{r}}(X_{r}^\xi)\,dB_r
  -\xi_{t\wedge\tau^\xi},
\qquad X_{0-}^\xi=x,\, \varepsilon_0 = i,
\end{equation}
where $\tau^\xi:=\inf\{t\geq0:X_t^\xi=0\}$ is the bankruptcy time, and $\inf\varnothing=+\infty$.
In regime $i$, $\mu_i(x)$ is the expected rate of net cash inflow and $\sigma_i(x)$ measures cash-flow volatility at surplus $x$. 

For convenience, we make the following assumption: 
\begin{assumption}
\label{ass:coefficient}
for each $i\in S$, $\mu_i$ and $\sigma_i$ are Lipschitz continuous on $[0,+\infty)$ and $\sigma_i(x)>0$ for $x>0$.
\end{assumption}
Let $\beta:[0,+\infty)\to(0,+\infty)$ denote the discount function of shareholders, where $\beta\in C^1([0,\infty)),\,\beta(0)=1$ and $\beta'(t)\leq0$.
This paper considers the following discounted cumulative dividend as the objective function:
\begin{equation}
\label{eq:dividend-payoff}
J(x,i;\xi)
:=\mathbb E_{x,i}\!\left[
\int_{[0,\tau^\xi]}\beta(t)\,d\xi_t
\right],
\end{equation}
where $\mathbb E_{x,i}(\cdot) = \mathbb E[\cdot| X_{0-} = x,\varepsilon_0 = i]$ and $\int_{[0,\tau^\xi]} \beta(t)\,d\xi_t:= \int_0^{\tau^\xi}\beta(t)\,d\xi^c_t + \sum_{t\in[0,\tau^\xi]}\beta(t)\Delta \xi_t$.
\vskip 5pt
We consider maximizing the objective function, i.e., the following optimization problem: $sup\{J(x,i;\xi)\}$.
However, this problem is time-inconsistent because the discount function $\beta$ might not be exponential.
Therefore, this paper considers the equilibrium solution to this problem rather than maximizing it based on the concepts of the equilibrium framework in singular control proposed by \citet{liang2024equilibria}.

\begin{remark}
\label{rem:regime-brownian-independence}
Considering that $\{\varepsilon_t\}_{t\geq0}$ represents the firm's overall operating environment, directly influencing the expected rate of net cash inflow and cash-flow volatility, and $\{B_t\}_{t\geq0}$ captures cash-flow fluctuations within a given regime, it is reasonable to assume that $B$ and $\varepsilon$ are independent.
\end{remark}

\begin{remark}
\label{rem:simultaneous-jumps}
If a regime switch and a dividend jump occur at the same time, we assume that the regime switches first and the dividend is chosen after the new regime is observed.
Thus, if $X_{t-}^{\xi}=x$, $\varepsilon_{t-}=i$, $\varepsilon_t=j$, and
$\Delta\xi_t=a$, the state changes as
$$
(x,i)\longrightarrow(x,j)\longrightarrow(x-a,j),
\qquad 0\leq a\leq x.
$$
The regime switch itself does not change the surplus.
This assumption is reasonable, given that the regime is observable to the firm.
\end{remark}

\begin{remark}
\label{rem:shareholder-discounting}
\citet{ebert2020weighted} motivate non-exponential discounting through the aggregation of heterogeneous time preferences.
For example, shareholders with a long investment horizon may value delayed dividends more highly than shareholders with immediate liquidity needs.
Assigning fixed evaluation weights $\theta$ and $1-\theta$ to two shareholder groups with respective discount rates $0<\delta_1<\delta_2$ gives $\beta(t)=\theta e^{-\delta_1t}+(1-\theta)e^{-\delta_2t},\; 0<\theta<1$, which is a standard non-exponential discount function.
\end{remark}

\subsection{Admissible controls and control laws}
\label{subsec:admissible-laws}
Motivated by the equilibrium framework of singular control proposed by \citet{liang2024equilibria}, we introduce the following definitions.
\begin{definition}[Admissible singular control]
\label{def:admissible-control}
Given $(x,i)\in E:=[0,+\infty)\times S$, a singular control $\xi$ is an admissible singular control if the following (a) and (b) hold:

\medskip
\noindent
(a) the surplus dynamics
$$
dX_t^\xi=\mu_{\varepsilon_{t}}(X_{t}^\xi)\,dt
+\sigma_{\varepsilon_{t}}(X_{t}^\xi)\,dB_t
-d\xi_{t},\;t\in [0,\tau^\xi],\,X^\xi_{0-} = x,\;\;\varepsilon_0 = i.
$$
has a unique strong solution $\{X_t^\xi\}_{t\in [0,\tau^\xi]}$ and $X_t^\xi\geq0$, $0\leq\Delta\xi_t\leq X_{t-}^\xi$. 

\medskip
\noindent
(b) $J(x,i;\xi)=\mathbb E_{x,i}[\int_{[0,\tau^\xi]}\beta(t)\,d\xi_t]<+\infty$.

We denote the set of all admissible singular controls by $\mathcal D(x,i)$.
\end{definition}

An admissible singular control is defined for a given initial state. 
To specify the firm's response after a perturbation (see next subsection~\ref{subsec:equilibrium}), we define a singular control law as a partition of the state-regime space $E$, following \citet{liang2024equilibria}. 

\begin{definition}[Admissible singular control law]
\label{def:admissible-law}
Suppose that $\Xi=(W^\Xi,P^\Xi)$ is a partition of $E$, where $W^\Xi$ is a relative open set of $E$ denoting the waiting region and $P^\Xi=E\setminus W^\Xi$ is a relative closed set of $E$ denoting the payout region. 
We call $\Xi$ an admissible singular control law if the following (a) and (b) hold.

\medskip
\noindent
(a) For any initial $(x,i)\in E$, the Skorokhod reflection problem
\begin{equation}
\label{eq:state-reflection}
\left\{
\begin{aligned}
&dX_t^\xi=\mu_{\varepsilon_{t}}(X_{t}^\xi)\,dt
+\sigma_{\varepsilon_{t}}(X_{t}^\xi)\,dB_t
-d\xi_{t},\;t\in [0,\tau^\xi],\\
&(X_t^\xi,\varepsilon_t)
\in\overline{W^\Xi},
\;t\in [0,\tau^\xi],\\
&\int_0^{t\wedge\tau^\xi}
\mathbf1_{\{(X_r^\xi,\varepsilon_r)\notin P^\Xi\}}
\,d(\xi_r)^c=0,
\;t\geq0,\\
&\Delta\xi_t
=\inf\left\{
a\in[0,X_{t-}^\xi]:
(X_{t-}^\xi-a,\varepsilon_t)
\in\overline{W^\Xi})
\right\},\\
&X^\xi_{0-} = x,\;\;\varepsilon_0 = i 
\end{aligned}
\right.
\end{equation}
has a unique strong solution $(X^\xi,\xi):=(X^{(x,i,\Xi)},\xi^{(x,i,\Xi)})$. We call $\xi$ the singular control generated by $\Xi$ at $(x,i)$.

\medskip
\noindent
(b) $\xi$ generated by $\Xi$ at $(x,i)$ is an admissible singular control, i.e., $\xi^{(x,i,\Xi)}\in \mathcal{D}(x,i)\in E$.
\end{definition}

The definition of admissible singular control law corresponds to the whole state-regime space $E$. It is different from admissible control $\mathcal{D}(x,i)$ (Definition~\ref{def:admissible-control}), which is defined for each initial $(t,x)\in E$.
However, there are some relations. The singular control generated by an admissible singular law at $(x,i)$ is indeed an admissible singular control in $\mathcal{D}(x,i)$.
Hence, an admissible singular control law corresponds to a certain combination of admissible singular controls by varying the initial $(x,i)$.

\subsection{Equilibrium singular control laws}
\label{subsec:equilibrium}
In this subsection, based on the admissible definition~\ref{def:admissible-control} and ~\ref{def:admissible-law}, we introduce the equilibrium singular control law proposed by \citet{liang2024equilibria}.

Let $\widehat\Xi$ be an admissible law. Fix $(x,i)\in E$,
$\eta\in\mathcal D(x,i)$, and $h>0$. The perturbed control follows
$\eta$ on $[0,h)$ and then restarts $\widehat\Xi$ from
$(X_{h-}^\eta,\varepsilon_h)$. Denote the incremental control generated
at this restart by $\zeta^{h,X_{h-}^\eta,\varepsilon_h,\widehat\Xi}$,
normalized to be zero at $h-$. On $\{\tau^\eta\geq h\}$, set
\begin{equation}
\label{eq:perturbed-control}
\xi_t^h=
\begin{cases}
\eta_t, &0\leq t<h,\\[2pt]
\eta_{h-}+\zeta_t^{h,X_{h-}^\eta,\varepsilon_h,\widehat\Xi},
&t\geq h.
\end{cases}
\end{equation}
On $\{\tau^\eta<h\}$, the process remains stopped at $\tau^\eta$.
Any payment at $h$ is thus generated by $\widehat\Xi$ after the regime
at $h$ has been observed.

\begin{definition}[Equilibrium singular control law]
\label{def:equilibrium-law-revised}
An admissible law $\widehat\Xi$ is an equilibrium if, for every
$(x,i)\in E$ and $\eta\in\mathcal D(x,i)$, the controls $\xi^h$ in
\eqref{eq:perturbed-control} belong to $\mathcal D(x,i)$ for all
sufficiently small $h>0$ and
\begin{equation}
\label{eq:equilibrium-condition-revised}
\liminf_{h\downarrow0}
\frac{J(x,i;\xi^{x,i,\widehat\Xi})-J(x,i;\xi^h)}{h}
\geq0.
\end{equation}
The corresponding equilibrium value is
$V_i^{\widehat\Xi}(x):=J(x,i;\xi^{x,i,\widehat\Xi})$.
\end{definition}

This paper aims to determine the equilibrium singular control law of the objective function \eqref{eq:dividend-payoff}.

\begin{remark}
\label{rem:classical-dividends}
\citet{jiang2012optimal} study dividend distribution under regime switching with a regime-dependent discount rate.
Their discount factor has the form 
$$\exp(-\int_0^t r(\varepsilon_s)\,ds),\; r(i)>0,\; i\in S,$$
which is multiplicative along the regime path and yields a time-consistent control problem.
In the present model, the non-exponential function $\beta$ is applied, thus introducing time-inconsistency.
We therefore consider an equilibrium law.
\end{remark}

\begin{remark}
\label{rem:singular-framework}
\citet{liang2024equilibria} introduce equilibrium singular control laws for a finite-horizon problem with non-exponential discounting.
Our perturbation follows their construction: after a short deviation, the prescribed law is restarted from the state reached at the end of the deviation. 
The present problem has an infinite horizon, regime-dependent cash flows, and an absorbing bankruptcy boundary.
Moreover, the coefficients do not depend on cumulative payments,  thus $\xi$ need not be included as a separate state variable.
\end{remark}

\section{Verification theorem}
\label{sec:verification}

In this section, we establish a verification theorem for the equilibrium singular control law.
The assumption~\ref{ass:coefficient} remains in force.
For convenience, we extend the surplus process $X$ by absorption condition: $X_t^\xi=0$, $\xi_t=\xi_{\tau^\xi}$ for $t\geq\tau^\xi$.

For $\varphi=(\varphi_i)_{i\in S}$ with $\varphi_i\in C^2((0,+\infty))$, the infinitesimal generator of the uncontrolled surplus and regime process is
\begin{equation}
\label{eq:verification-generator}
(\mathcal L\varphi)_i(x)
=\mu_i(x)\varphi_i'(x)
+\frac12\sigma_i^2(x)\varphi_i''(x)
+\sum_{j\neq i}q_{ij}\bigl(\varphi_j(x)-\varphi_i(x)\bigr),
\qquad x>0.
\end{equation}
Let $V=(V_i:x\in[0,+\infty)\to\mathbb{R})_{i\in S}$ and $f=(f_i:(x,s)\in[0,+\infty)\times[0,+\infty)\to\mathbb{R})_{i\in S}$ be candidate functions.
The variable $s$ denotes a discount lag. Define
$$
\begin{aligned}
W&=(\{0\}\times S)
\cup\{(x,i)\in E:x>0,\ V_i'(x)>1\},\quad
P=E\setminus W,\\
W_i&=\{x>0:(x,i)\in W\},\qquad
P_i=\{x>0:(x,i)\in P\},\qquad
\widehat\Xi=(W,P).
\end{aligned}
$$
The absorbing boundary is included in $W$. 
All differential equations below are imposed only for $x>0$, and closures are taken relative to $E$.

\begin{theorem}[Verification theorem]
\label{thm:verification}
Suppose that $\widehat\Xi$ is an admissible singular control law and that $V$ and $f$ satisfy the following conditions:

\medskip
\noindent
(a) For every $i\in S$,
$$
\begin{aligned}
&V_i\in C^2((0,+\infty))\cap C([0,+\infty)),\\
&f_i\in C^{0,1}([0,+\infty)^2)
\cap C^{1,1}((0,+\infty)\times[0,+\infty))
\cap C^{2,1}((\overline{W_i}\cap(0,+\infty))\times[0,+\infty)),
\end{aligned}
$$
and $V_i(x)=f_i(x,0),\;f_i(0,s)=0,\; x,s\geq0$.
Derivatives on $\overline{W_i}$ are taken from $W_i$, and those at $s=0$ are right derivatives.

\medskip
\noindent
(b) For every $i\in S$, the extended HJB system holds:
\begin{equation}
\label{eq:verification-hjb}
\left\{
\begin{aligned}
&\max\bigl\{(\mathcal LV)_i(x)+f_{i,s}(x,0),\,1-V_i'(x)\bigr\}=0,
&&x>0,\\
&f_{i,s}(x,s)+(\mathcal Lf(\cdot,s))_i(x)=0,
&&(x,i)\in\overline W,\ x>0,\ s\geq0,\\
&f_{i,x}(x,s)=\beta(s),
&&(x,i)\in P,\ s\geq0.
\end{aligned}
\right.
\end{equation}
At a payout interface, $f_{i, xx}$ in the second equation is evaluated by its continuous extension from $W_i$.

\medskip
\noindent
(c) There exists $p\geq1$ such that, for every $T>0$, some constant $C_T>0$ satisfies
\begin{equation}
\label{eq:verification-growth}
\max_{i\in S}\left\{
|V_i'(x)|+|(\mathcal LV)_i(x)|+\sup_{0\leq s\leq T}|f_{i,s}(x,s)|
\right\}
\leq C_T(1+x^p),\qquad x>0.
\end{equation}
For every $(x,i)\in E$ and $s\geq0$, the control $\widehat\xi=\xi^{x,i,\widehat\Xi}$, its surplus $\widehat X=X^{x,i,\widehat\Xi}$ and its bankruptcy time $\widehat\tau$ satisfy
\begin{equation}
\label{eq:verification-transversality}
\lim_{T\to\infty}
\mathbb E_{x,i}\!\left[
\left|f_{\varepsilon_{T\wedge\widehat\tau}}
(\widehat X_{T\wedge\widehat\tau},s+T\wedge\widehat\tau)\right|
\right]=0.
\end{equation}

Then $\widehat\Xi$ is an equilibrium singular control law. In addition,
\begin{equation}
\label{eq:verification-representation}
f_i(x,s)=\mathbb E_{x,i}\!\left[
\int_{[0,\widehat\tau]}\beta(s+t)\,d\widehat\xi_t
\right],\qquad
V_i(x)=f_i(x,0)=J(x,i;\widehat\xi).
\end{equation}
\end{theorem}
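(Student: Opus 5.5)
The proof has two parts. First I establish the representation \eqref{eq:verification-representation}. Then I use it to evaluate the first-order effect of a local deviation.

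\textbf{Step 1 (representation).} Fix $(x,i)$ and $s\ge0$, and let $\widehat X,\widehat\xi,\widehat\tau$ be generated by $\widehat\Xi$. I apply the generalized It\^o formula for semimartingales with jumps (Markov-chain part included) to $t\mapsto f_{\varepsilon_t}(\widehat X_t,s+t)$ on $[0,T\wedge\widehat\tau]$, localized by stopping times $\tau_n$ at which $\widehat X$ exits $[1/n,n]$.

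\begin{itemize}
\item Since $(\widehat X_t,\varepsilon_t)\in\overline W$, the $C^{2,1}$ regularity on $\overline{W_i}$ and the second line of \eqref{eq:verification-hjb} make the $dt$-terms $f_{i,s}+(\mathcal Lf)_i$ vanish. The Markov-chain compensator appears inside $\mathcal L$, and the remaining chain term is a martingale.
\item The continuous part of $\widehat\xi$ charges only times with $(\widehat X_t,\varepsilon_t)\in P$. There $f_{i,x}=\beta(s+t)$, so it contributes $-\int\beta(s+t)\,d\widehat\xi^c_t$.
\item A jump $\Delta\widehat\xi_t=a$ occurs from $(x',j)$ with $[x'-a,x']\subset P_j\cup\{\text{boundary}\}$, by the minimality in \eqref{eq:state-reflection}. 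Integrating $f_{j,x}=\beta$ along the segment gives $f_j(x'-a,\cdot)-f_j(x',\cdot)=-\beta(s+t)a$. This requires $(y,j)\in P$ for $y\in(x'-a,x']$, which holds because of the infimum in the jump rule.
\end{itemize}

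Taking expectations, the stochastic integrals are martingales by the growth bound \eqref{eq:verification-growth} together with moment bounds for $\widehat X$ from Lipschitz coefficients. Letting $n\to\infty$ and then $T\to\infty$, with $f_i(0,\cdot)=0$, the transversality condition \eqref{eq:verification-transversality}, and monotone convergence of the dividend integral, yields the formula for $f_i$. Setting $s=0$ gives $V_i=J$.

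\textbf{Step 2 (deviation).} For $\eta\in\mathcal D(x,i)$, the strong Markov property at $h$ and Step 1 give
\[
J(x,i;\xi^h)=\mathbb E_{x,i}\Big[\int_{[0,h)}\beta(t)\,d\eta_t+f_{\varepsilon_h}(X^\eta_{h-},h)\mathbf 1_{\{\tau^\eta\ge h\}}\Big].
\]
The restart payment at $h$ is absorbed by the segment argument above, since $\widehat\Xi$ starts from $(X^\eta_{h-},\varepsilon_h)$. Admissibility of $\xi^h$ follows from this identity and the finiteness of $f$. Next I write
\[
f_{\varepsilon_h}(X^\eta_{h-},h)=V_{\varepsilon_h}(X^\eta_{h-})+\big[f_{\varepsilon_h}(X^\eta_{h-},h)-f_{\varepsilon_h}(X^\eta_{h-},0)\big],
\]
and bound the bracket by $\int_0^h f_{\cdot,s}$, which equals $h\,f_{i,s}(x,0)+o(h)$ by continuity and the growth bound. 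I apply It\^o to $V_{\varepsilon_t}(X^\eta_t)$ on $[0,h)$, using $V\in C^2$ and the localization as before; the first line of \eqref{eq:verification-hjb} gives $V'\ge1$ and $\mathcal LV+f_{\cdot,s}(\cdot,0)\le0$. Jumps of $\eta$ satisfy $V(x'-a)-V(x')\le -a\le-\beta(t)a$ since $\beta\le1$, and the continuous part similarly gives $-V'\,d\eta^c\le-\beta\,d\eta^c$. Combining these,
\[
J(x,i;\xi^h)-V_i(x)\le\mathbb E\int_0^h\big[(\mathcal LV)_{\varepsilon_t}(X^\eta_t)+f_{i,s}(x,0)\big]dt+o(h)\le o(h).
\]
For this I need $\mathbb E\int_0^h|(\mathcal LV)(X_t)-(\mathcal LV)_i(x)|\,dt=o(h)$ and the same for $f_{\cdot,s}$. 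These follow from right-continuity of paths at $0$, the probability of a regime switch being $O(h)$, and dominated convergence via polynomial growth. On $\{\tau^\eta<h\}$ the terms are handled by $f(0,\cdot)=V(0)=0$. This gives \eqref{eq:equilibrium-condition-revised}.

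\textbf{Main obstacle.} The hardest point is the $o(h)$ control when $\eta$ makes an initial lump payment $\eta_0$ or when $x$ sits in $P_i$. There the process immediately moves to $x-\Delta\eta_0$, so the continuity argument must be run from $(X^\eta_0,i)$ rather than from $(x,i)$. The $f_{\cdot,s}(\cdot,0)$ term must then be matched with $\mathcal LV$ at that point. I would handle this by applying the HJB inequality pointwise along the path, keeping $f_{\varepsilon_t,s}(X_t,0)$ instead of freezing at $x$. This also requires a bound $|f_s(y,r)-f_s(y,0)|\to0$ uniformly on compacts, which comes from $f\in C^{0,1}$ and the $C^{1,1}$ regularity away from $0$.
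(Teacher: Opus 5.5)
Your proposal follows the paper's proof essentially step for step: It\^o's formula for $f_{\varepsilon_t}(\widehat X_t,s+t)$ along the reflected process gives the representation (the drift vanishes on $\overline W$, the continuous and jump dividend terms reduce to $\beta$ via $f_{i,x}=\beta$ on $P$ and minimal intervention, and transversality passes $T\to\infty$), while the restart identity combined with It\^o's formula for $V_{\varepsilon_t}(X^\eta_t)$ gives the deviation estimate. The initial-payment issue you flag is resolved in the paper essentially as you suggest: it expands around $z=x-\eta_0$ (deterministic because $\mathcal F_0$ is trivial) instead of $x$, treats $z=0$ directly via $V_i'\ge 1$ and $V_i(0)=0$, and applies the HJB inequality at $z$, so your expansion at $x$ in Step 2 should simply be replaced by one at $z$ (the case $x\in P_i$ needs nothing extra, since the first line of the HJB system holds at every $x>0$).
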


\begin{proof}
The assertions are immediate when $x=0$. Fix $x>0$ and $i\in S$.
The Lipschitz continuity on the coefficients in Assumption~\ref{ass:coefficient} implies linear growth.
Because  dividends $\xi$ are nondecreasing and the corresponding surplus $X^\xi$ is nonnegative, the Burkholder-Davis-Gundy and Gronwall inequalities give, for every $\xi\in\mathcal D(x,i)$, $T>0$, and $m\geq2$,
$$
\mathbb E_{x,i}\!\left[
\sup_{0\leq t\leq T}|X_t^\xi|^m+\xi_T^m
\right]\leq C_{m,T}(1+x^m),
$$
where $C_{m,T}$ does not depend on $\xi$. 
Also, \eqref{eq:verification-growth}, $V_i(0)=0$, and
$f_i(x,s)=V_i(x)+\int_0^s f_{i,s}(x,r)\,dr$ give
$$
\max_{i\in S}\left\{|V_i(x)|+
\sup_{0\leq s\leq T}|f_i(x,s)|\right\}
\leq C_T(1+x^{p+1}),\qquad x\geq0.
$$
These estimates will justify the finite-horizon limits below.

First, we establish the representation of $f$. 
Fix $s\geq0$ and write $\rho_T=T\wedge\widehat\tau$.
Because $f_{i}$ is $C^1$ continuous in $x\in(0,+\infty)$, the following generalized It\^o formula applies across payout interfaces without a local-time term. 
Applying generalized It\^o formula to $f_{\varepsilon_t}(\widehat X_t,s+t)$ on $[0-,\rho_T]$ gives, after localization and taking expectations,
$$
\begin{aligned}
&\mathbb E_{x,i}\!\left[
  f_{\varepsilon_{\rho_T}}(\widehat X_{\rho_T},s+\rho_T)
  \right]-f_i(x,s)
=\mathbb E_{x,i}\int_0^{\rho_T}
  \left[f_{\varepsilon_t,s}(\widehat X_t,s+t)
  +(\mathcal Lf(\cdot,s+t))_{\varepsilon_t}(\widehat X_t)\right]dt\\
&\qquad-\mathbb E_{x,i}\int_0^{\rho_T}
  f_{\varepsilon_t,x}(\widehat X_t,s+t)\,d\widehat\xi_t^c
-\mathbb E_{x,i}
  \sum_{t\in[0,\rho_T]}\int_0^{\Delta\widehat\xi_t}
  f_{\varepsilon_t,x}(\widehat X_{t-}-a,s+t)\,da.
\end{aligned}
$$
Here the regime jump is evaluated before a simultaneous dividend jump; the latter therefore uses the new regime $\varepsilon_t$.
The drift integral vanishes by the state constraint and
\eqref{eq:verification-hjb}.
The continuous dividend term equals $-\mathbb E_{x,i}\int_0^{\rho_T}\beta(s+t)\,d\widehat\xi_t^c$.
For any $\Delta\widehat\xi_t>0$, minimal intervention in \eqref{eq:state-reflection} gives $(\widehat X_{t-}-a,\varepsilon_t)\in P,\; 0\leq a<\Delta\widehat\xi_t$.
Hence the jump term equals $-\mathbb E_{x,i}\sum_{t\in[0,\rho_T]}
\beta(s+t)\Delta\widehat\xi_t$.
The localized terminal values and dividend integrals are dominated by an integrable multiple of $1+\sup_{t\leq T}(\widehat X_t)^{p+1}+\widehat\xi_T$.
We may therefore remove the localization and obtain
\begin{equation}
\label{eq:verification-finite-horizon}
f_i(x,s)=\mathbb E_{x,i}\int_{[0,\rho_T]}
\beta(s+t)\,d\widehat\xi_t
+\mathbb E_{x,i}\!\left[
f_{\varepsilon_{\rho_T}}(\widehat X_{\rho_T},s+\rho_T)
\right].
\end{equation}
Letting $T\to\infty$ in \eqref{eq:verification-finite-horizon}, monotone convergence and \eqref{eq:verification-transversality} establish the first identity in \eqref{eq:verification-representation}. 
Setting $s=0$ establishes the second.

Next, fix $\eta\in\mathcal D(x,i)$ and let $\xi^h$ be the perturbed control in \eqref{eq:perturbed-control}.
Put $a=\eta_0$ and $z=x-a$.
Under $\mathbb P_{x,i}$, the augmented natural filtration $\mathbb{F}$ is trivial at time zero, as such,  $a$ is deterministic up to a null set.
If $z=0$, the deviation pays $x$ immediately and stops. 
As \eqref{eq:verification-hjb} implies $V_i'\geq1$ and $V_i(0)=0$,
$$
J(x,i;\xi^h)-V_i(x)=x-V_i(x)\leq0.
$$
It remains to consider $z>0$.

By restarting $\widehat\Xi$ at time $h$ and using \eqref{eq:verification-representation},
\begin{equation}
\label{eq:verification-perturbed-payoff}
J(x,i;\xi^h)
=\mathbb E_{x,i}\!\left[
\int_{[0,h)}\beta(t)\,d\eta_t
+f_{\varepsilon_h}(X_{h-}^\eta,h)
\right].
\end{equation}
If bankruptcy occurs before $h$, the second term is zero by absorption and $f_j(0,h)=0$. 
The moment and growth estimates above show that this payoff is finite. 
The remaining admissibility conditions follow from concatenation and the admissibility of $\widehat\Xi$.
Thus $\xi^h\in\mathcal D(x,i)$.

Using right continuity of $\eta$, we have
$$
\tau^\eta>0,\qquad X_{h-}^\eta\longrightarrow z,\qquad
\varepsilon_h\longrightarrow i,
\qquad h\downarrow0,\quad\mathbb P_{x,i}\text{-a.s.}
$$
Because of $f_j(y,h)-V_j(y)=\int_0^h f_{j,s}(y,r)\,dr$, joint continuity, \eqref{eq:verification-growth} and dominated convergence give
$$
\mathbb E_{x,i}\!\left[
f_{\varepsilon_h}(X_{h-}^\eta,h)
-V_{\varepsilon_h}(X_{h-}^\eta)
\right]=h f_{i,s}(z,0)+o(h).
$$
Similarly, continuity of $(\mathcal LV)_i$ on $(0,\infty)$ gives
$$
\mathbb E_{x,i}\int_0^h\mathbf1_{\{t<\tau^\eta\}}
(\mathcal LV)_{\varepsilon_t}(X_t^\eta)\,dt
=h(\mathcal LV)_i(z)+o(h).
$$
For both limits, the normalized averages are dominated by an integrable multiple of $1+\sup_{t\leq1}(X_t^\eta)^p$.

Applying the generalized It\^o formula to $V_{\varepsilon_t}(X_t^\eta)$ on $[0-,h)$, we have
$$
\begin{aligned}
&\mathbb E_{x,i}\!\left[
V_{\varepsilon_h}(X_{h-}^\eta)-V_i(x)
+\int_{[0,h)}\beta(t)\,d\eta_t
\right]\quad=\mathbb E_{x,i}\int_0^h\mathbf1_{\{t<\tau^\eta\}}
  (\mathcal LV)_{\varepsilon_t}(X_t^\eta)\,dt\\
&+\mathbb E_{x,i}\int_0^{h\wedge\tau^\eta}
  \bigl[\beta(t)-V_{\varepsilon_t}'(X_t^\eta)\bigr]\,d\eta_t^c+\mathbb E_{x,i}\sum_{t\in[0,h)}
  \int_0^{\Delta\eta_t}
  \bigl[\beta(t)-V_{\varepsilon_t}'(X_{t-}^\eta-a)\bigr]\,da.
\end{aligned}
$$
Similarly, localization can again be removed: the drift term is polynomially bounded, and the control terms are bounded in absolute value by an integrable multiple of $(1+\sup_{t\leq h}(X_t^\eta)^p)\eta_h$.
As $\beta(t)\leq1$ and $V_j'\geq1$, both control terms are
nonpositive. 
Combining this inequality with \eqref{eq:verification-perturbed-payoff} and the preceding limits gives
$$
J(x,i;\xi^h)-V_i(x)
\leq h\bigl[(\mathcal LV)_i(z)+f_{i,s}(z,0)\bigr]+o(h)
\leq o(h),
$$
where the last inequality follows from \eqref{eq:verification-hjb}. 
By \eqref{eq:verification-representation}, we have 
$$
\liminf_{h\downarrow0}
\frac{J(x,i;\xi^{x,i,\widehat\Xi})-J(x,i;\xi^h)}{h}
\geq0.
$$
Therefore $\widehat\Xi$ is an equilibrium singular control law in the sense of Definition~\ref{def:equilibrium-law-revised}.
\end{proof}

\section{A special case: two regimes and bi-exponential discount}
\label{sec:special-case}

In this section, we consider two regimes with constant cash-flow coefficients and bi-exponential discounting. 
We seek a barrier law and reduce the equalities in \eqref{eq:verification-hjb} to a free-boundary problem.

\subsection{The Special model}
\label{subsec:special-model}

Let $S=\{1,2\}$ and
$$
Q=\begin{pmatrix}-q_1&q_1\\q_2&-q_2\end{pmatrix},
\qquad q_1,q_2>0.
$$
For $i\in S$, assume $\mu_i(x)\equiv\mu_i>0$ and $\sigma_i(x)\equiv\sigma_i>0$, and set $a_i=\sigma_i^2/2$ and $j=3-i$. 
The firm's cash-flow characteristics thus depend on the operating regime but not on its cash reserves. 
The expected duration of regime $i$ is $1/q_i$. 
Before bankruptcy, the surplus satisfies
$$
dX_t^\xi=\mu_{\varepsilon_t}\,dt
+\sigma_{\varepsilon_t}\,dB_t-d\xi_t.
$$
The discount function is
\begin{equation}
\label{eq:special-discount}
\beta(s)=\theta e^{-\delta_1s}+(1-\theta)e^{-\delta_2s}
=\sum_{\ell=1}^2\omega_\ell e^{-\delta_\ell s},
\qquad s\geq0,
\end{equation}
where $0<\theta<1$, $0<\delta_1<\delta_2$, $\omega_1=\theta$, and $\omega_2=1-\theta$.
Here $\theta$ is the weight assigned to the more patient shareholder group, whose discount rate is $\delta_1$, following the weighted discounting interpretation of \citet{ebert2020weighted}.

For $b=(b_1,b_2)\in(0,\infty)^2$, consider the barrier law $\Xi^b=(W^b,P^b)$, where
\begin{equation}
\label{eq:special-barrier-law}
W^b=\bigcup_{i=1}^2\bigl([0,b_i)\times\{i\}\bigr),
\qquad P^b=E\setminus W^b.
\end{equation}
In regime $i$, the firm pays any surplus above $b_i$ immediately and reflects the surplus downward at $b_i$. 
A switch to a lower barrier therefore triggers a lump-sum dividend if the surplus exceeds that barrier.

For this partition, the auxiliary equations in \eqref{eq:verification-hjb} become, for $i\in S$ and $s\geq0$,
\begin{equation}
\label{eq:special-auxiliary-system}
\left\{
\begin{aligned}
&f_{i,s}+a_if_{i,xx}+\mu_if_{i,x}+q_i(f_j-f_i)=0,
&&0<x\leq b_i,\\
&f_{i,x}(x,s)=\beta(s),&&x\geq b_i,\\
&f_i(0,s)=0.
\end{aligned}
\right.
\end{equation}
At $x=b_i$, second derivatives are taken from the waiting region.

\subsection{Ansatz}
\label{subsec:special-ansatz}

For a barrier vector $b=(b_1,b_2)$, we make the ansatz
\begin{equation}
\label{eq:special-ansatz}
f_i^b(x,s)=\sum_{\ell=1}^2\omega_\ell e^{-\delta_\ell s}
u_i^{(\ell),b}(x),\qquad i\in S.
\end{equation}
Then
$$
V_i^b(x)=\sum_{\ell=1}^2\omega_\ell u_i^{(\ell),b}(x),
\qquad
f_{i,s}^b(x,0)=-\sum_{\ell=1}^2\omega_\ell\delta_\ell
u_i^{(\ell),b}(x).
$$
Substituting \eqref{eq:special-ansatz} into \eqref{eq:special-auxiliary-system} and using the linear independence of $e^{-\delta_1s}$ and $e^{-\delta_2s}$ gives the fixed-boundary problem
\begin{equation}
\label{eq:special-fixed-boundary}
\left\{
\begin{aligned}
&a_i\bigl(u_i^{(\ell),b}\bigr)''(x)
+\mu_i\bigl(u_i^{(\ell),b}\bigr)'(x)
+q_i\bigl(u_j^{(\ell),b}(x)-u_i^{(\ell),b}(x)\bigr)
-\delta_\ell u_i^{(\ell),b}(x)=0,
&&0<x\leq b_i,\\
&u_i^{(\ell),b}(0)=0,\qquad
\bigl(u_i^{(\ell),b}\bigr)'(b_i-)=1,\\
&u_i^{(\ell),b}(x)=u_i^{(\ell),b}(b_i)+x-b_i,
&&x\geq b_i,
\end{aligned}
\right.
\end{equation}
for $i,\ell\in\{1,2\}$ and $j=3-i$. 
We seek solutions in $C^1([0,\infty))\cap C^2((0,b_i))$, with second derivatives at $b_i$ understood from the left. 

The affine extension in \eqref{eq:special-fixed-boundary} is used in the coupling term whenever $x\geq b_j$. 
Thus, if $b_1<b_2$, the equations are coupled on $(0,b_1)$, while on $(b_1,b_2)$ the first component $u_1^{(\ell),b}$ is affine and the second component $u_2^{(\ell),b}$ solves a scalar ODE.
The second component $u_2^{(\ell),b}$ and its first derivative match at $b_1$; its second derivative at $b_1$ is continuous by the ODE. 
Interchanging the indices covers $b_2<b_1$, and the intermediate interval is empty when $b_1=b_2$.

It remains to determine the barriers $b$. 
As $V_i^b$ is affine for $x\geq b_i$, the $C^2$ regularity required in Theorem~\ref{thm:verification} gives the equations
\begin{equation}
\label{eq:special-free-boundary}
\Gamma_i(b):=(V_i^b)''(b_i-)
=\sum_{\ell=1}^2\omega_\ell
\bigl(u_i^{(\ell),b}\bigr)''(b_i-)=0,
\qquad i=1,2.
\end{equation}
These conditions are imposed on the weighted values $V_i^b$.
For each fixed $b$, \eqref{eq:special-fixed-boundary} is a linear ODE.
The free-boundary problem is therefore to find $b^*\in(0,\infty)^2$ such that $\Gamma(b^*)=0$, where $\Gamma=(\Gamma_1,\Gamma_2)$.
The solvability of the fixed-boundary problem and the existence of such a zero will be established below.

Additionally, for a solution $b^*$, set $V_i=V_i^{b^*}$, $f_i=f_i^{b^*}$, and $\widehat\Xi=\Xi^{b^*}$. 
The remaining inequalities are
\begin{equation}
\label{eq:special-remaining-inequalities}
V_i'(x)\geq1,\quad 0<x<b_i^*,
\qquad
(\mathcal LV)_i(x)+f_{i,s}(x,0)\leq0,\quad x>b_i^*,
\qquad i=1,2.
\end{equation}
These inequalities and the other conditions of Theorem~\ref{thm:verification} will be verified after the existence analysis.

\subsection{Existence of a positive free-boundary pair}
\label{subsec:existence-free-boundaries}

We prove that the fixed-boundary problem \eqref{eq:special-fixed-boundary} has a unique solution and that $\Gamma(b)=0$ has a solution in $(0,\infty)^2$. 
Our main result is the following.

\begin{theorem}[Existence of the free boundaries]
\label{thm:existence-free-boundaries}
There exists $b^*\in(0,+\infty)^2$ such that $\Gamma(b^*)=0$.
Hence \eqref{eq:special-fixed-boundary} and \eqref{eq:special-free-boundary} admit a solution.
\end{theorem}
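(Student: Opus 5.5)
The argument goes in three steps: solve the fixed-boundary problem \eqref{eq:special-fixed-boundary} uniquely for every fixed $b\in(0,\infty)^2$, show that $b\mapsto\Gamma(b)$ is continuous, and find a rectangle $[\underline b,\overline b]^2$ on whose opposite faces $\Gamma_1$ and $\Gamma_2$ have opposite signs. The Poincaré--Miranda theorem then yields a zero $b^*$ of $\Gamma$.

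\emph{Fixed-boundary problem.} For each $\ell$ the problem is a linear two-point boundary-value problem. Take $b_1\le b_2$; the other case is symmetric. On $(b_1,b_2)$ the first component is affine, so the second component solves a scalar linear ODE with an affine source term. On $(0,b_1)$ we have a coupled $4$-dimensional first-order linear system. The conditions $u_i(0)=0$ and $u_i'(b_i-)=1$ give four linear conditions on the four free constants. Uniqueness follows from showing that the homogeneous problem (zero data at $0$, zero slope at the barriers) has only the trivial solution. For this I would use a maximum-principle argument, relying on $\delta_\ell>0$ and $q_i>0$. Look at a positive maximum of $\max_i u_i$. It cannot sit at an interior point, because there $u_i''\le 0$, $u_i'=0$ and $q_i(u_j-u_i)\le 0$, which together with $-\delta_\ell u_i<0$ contradicts the ODE. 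At a barrier the zero slope makes the function constant beyond it, so the maximum is attained at $b_i$ from the left. The ODE with $u_i'(b_i)=0$ then forces $u_i''(b_i-)>0$, which contradicts maximality. Hence the solution is unique. Continuity in $b$, including across the diagonal $b_1=b_2$ where the intermediate interval disappears, follows from continuous dependence of the fundamental matrices on the endpoints and from the invertibility of the $4\times4$ boundary matrix, which is a continuous function of $b$ with nonzero determinant.

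\emph{Sign estimates.} At the barrier the ODE gives
\[
a_i(u_i^{(\ell)})''(b_i-)=-\mu_i+(\delta_\ell+q_i)u_i^{(\ell)}(b_i)-q_iu_j^{(\ell)}(b_i),
\]
so $\Gamma_i(b)=a_i^{-1}\bigl[-\mu_i+\sum_\ell\omega_\ell\bigl((\delta_\ell+q_i)u_i^{(\ell)}(b_i)-q_iu_j^{(\ell)}(b_i)\bigr)\bigr]$.

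\emph{Small barriers.} I need uniform bounds of the form $0\le u_i^{(\ell)}\le C(x+b_i)$ and $|u_j^{(\ell)}(b_i)|\le C\max(b_1,b_2)+|b_i-b_j|$. These should come from comparison with explicit supersolutions, together with monotonicity $u'\ge0$, which again follows by a maximum-principle argument. On the face $b_i=\underline b$ this gives $\Gamma_i\le a_i^{-1}[-\mu_i+C\underline b+q_i\cdot(\text{something})]$. The delicate term is $-q_iu_j(b_i)$. It is bounded above by $0$ because $u_j\ge 0$, and that is the direction we need, so for $\underline b$ small, uniformly in $b_j\in[\underline b,\overline b]$, we get $\Gamma_i<0$.

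\emph{Large barriers.} On the face $b_i=\overline b$ I need $\Gamma_i>0$, uniformly in $b_j$. Since $u_j(b_i)\le u_j(b_j)+(b_i-b_j)^+$ by the affine extension, I have to control $(\delta_\ell+q_i)u_i(b_i)-q_iu_j(b_i)$ from below. This requires a uniform lower growth estimate, namely $u_i(b_i)-u_j(b_i)\ge -K$ and $u_i(b_i)\to\infty$ as $b_i\to\infty$, uniformly in $b_j$. I would get the lower growth of $u_i(b_i)$ from $u_i(b_i)\ge$ the value of a one-regime reflected problem with a rate bounded away, and bound $u_j-u_i$ at the same point by a constant (a Lipschitz-type estimate with $u'\ge 1$ beyond the barriers, plus a Gronwall bound for the coupled difference). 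Then $\delta_\ell u_i(b_i)$ dominates $\mu_i+q_iK$ once $\overline b$ is large.

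The main obstacle is making the large-barrier estimate uniform when $b_j$ is very different from $b_i$, in particular when $b_j\ll b_i$, where $u_j$ is affine over a long interval. Here I would use the explicit exponential solutions on the intermediate interval to bound the difference $u_i-u_j$ directly. With the strict signs on all four faces, Poincaré--Miranda gives $b^*$ with $\Gamma(b^*)=0$.
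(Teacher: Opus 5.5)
Your fixed-boundary, continuity and small-barrier steps are sound and follow the paper's structure. For the small-barrier face you need $u_i^{(\ell),b}(b_i)\le Cb_i$ with $C$ independent of $b_j$. The supersolution $w(x)=x+A(1-e^{-\lambda x})$, with $A=\max_k\mu_k/\delta_1$ and $\lambda$ large, gives this.

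The gap is on the large-barrier face. There you rely on $u_i(b_i)\to\infty$ and $u_i(b_i)-u_j(b_i)\ge -K$, both uniformly in $b_j$, so that $\delta_\ell u_i(b_i)$ dominates. This cannot work. The quantity
\[
-\mu_i+(\delta_\ell+q_i)u_i^{(\ell)}(b_i)-q_iu_j^{(\ell)}(b_i)=a_i\bigl(u_i^{(\ell)}\bigr)''(b_i-)
\]
is bounded independently of $b$, because second derivatives are uniformly bounded (as in the paper's uniform estimates). Your two estimates together would send it to $+\infty$. In fact each of them fails in some regime.

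\emph{Equal barriers.} For $b_j=b_i=b$, $u_i(b)$ stays bounded. In one regime,
$u(b)=(e^{\nu_+b}-e^{\nu_-b})/(\nu_+e^{\nu_+b}-\nu_-e^{\nu_-b})\to1/\nu_+$, where $\nu_\pm$ are the roots of $a\nu^2+\mu\nu-\delta=0$. Hence $\delta u(b)-\mu\to\tfrac12\bigl(\sqrt{\mu^2+4a\delta}-\mu\bigr)$, which is positive but only by a bounded margin. Your comparison with a one-regime reflected problem therefore yields only a constant. In two regimes one has $0\le u_i'(x)\le C(e^{-\kappa x}+e^{-\kappa(b-x)})$, so $u_i(b)\le 2C/\kappa$.

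\emph{Widely separated barriers.} For $b_j\ll b_i$, $u_j'=1$ on $(b_j,b_i)$. Meanwhile $u_i'$ solves $a_ip''+\mu_ip'-(q_i+\delta_\ell)p+q_i=0$ and stays near $c=q_i/(q_i+\delta_\ell)<1$ away from the endpoints. So $u_j(b_i)-u_i(b_i)\approx(1-c)(b_i-b_j)\to\infty$. Because $(q_i+\delta_\ell)c=q_i$, the linear growth cancels exactly.

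In every large-barrier regime the positivity of $\Gamma_i$ is thus an $O(1)$ effect, which growth bounds on $u$ cannot detect.

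The missing idea is to work with the slope $p_i=u_i'$. Since $\Gamma_i=\sum_\ell\omega_\ell(p_i^{(\ell)})'(b_i-)$, it suffices to prove $(p_i^{(\ell)})'(b_i-)>0$ directly. On the face $b_i=B$ of the square you only need $b_j\le b_i$. The paper argues by contradiction along $b_i^{(n)}\to\infty$ and splits into two cases.

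\emph{Bounded gap $b_i-b_j$.} Then $b_j\to\infty$. The barrier $C(e^{-\kappa x}+e^{-\kappa(b_j-x)})$ shows that $p$ decays on $[0,b_j]$ away from the endpoints. An Arzel\`a--Ascoli blow-up at $b_j$ produces a limit system on a half-line with $\bar p\to0$ at $-\infty$ and $\bar p_i=1$ at the barrier. The maximum principle gives $\bar p_i\le1$, so the limiting slope is nonnegative. It is strictly positive, because a zero slope would force $a_i\bar p_i''=\delta_\ell>0$ there, contradicting $\bar p_i\le 1$.

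\emph{Diverging gap $b_i-b_j\to\infty$.} The explicit form $p_i=c+A_ne^{\lambda_+y}+B_ne^{\lambda_-y}$ on the intermediate interval gives $B_n\to0$, hence $p_i'(b_i-)\to(1-c)\lambda_+>0$.

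Your idea of using the exponential solutions on the intermediate interval is the right ingredient for the second case. It must be applied to the slope at $b_i$, however, not to a bound on $u_i-u_j$.
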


To prove Theorem~\ref{thm:existence-free-boundaries}, we first establish the following lemmas. 
Throughout, endpoint derivatives are taken from the interval on which the corresponding ODE holds.

\begin{lemma}[Solvability of the fixed-boundary problem]
\label{lem:existence-fixed-boundary}
For every $b\in(0,+\infty)^2$ and $\ell\in\{1,2\}$, \eqref{eq:special-fixed-boundary} has a unique solution $u^{(\ell),b}=(u_1^{(\ell),b},u_2^{(\ell),b})$.
Each component belongs to $C^1([0,+\infty))\cap C^3((0,b_i))$ and is smooth on the intervals determined by $b_1$ and $b_2$.
\end{lemma}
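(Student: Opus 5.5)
Fix $\ell$ and $b$, and write $\delta=\delta_\ell$. By symmetry in the indices we may assume $b_1\le b_2$. The plan is to reduce \eqref{eq:special-fixed-boundary} to a finite linear system for the constants of integration and then prove uniqueness with a maximum principle. Existence will then follow from the Fredholm alternative, since the system is square.

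\textbf{Structure of solutions.} On $(0,b_1)$ both components solve the constant-coefficient system
\[
a_iu_i''+\mu_iu_i'-(q_i+\delta)u_i+q_iu_j=0,\qquad i=1,2,
\]
with $j=3-i$. This system has a four-dimensional solution space. The condition $u_1(0)=u_2(0)=0$ cuts it down to two dimensions, say with parameters $u_1'(0)=c_1$ and $u_2'(0)=c_2$. On $[b_1,b_2]$ we have $u_1(x)=u_1(b_1)+x-b_1$, so $u_2$ solves the scalar inhomogeneous ODE
\[
a_2u_2''+\mu_2u_2'-(q_2+\delta)u_2=-q_2\bigl(u_1(b_1)+x-b_1\bigr),
\]
with $C^1$ matching at $b_1$ (the data $u_2(b_1)$ and $u_2'(b_1)$ come from the left interval). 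This determines $u_2$ on $[b_1,b_2]$ uniquely and linearly in $(c_1,c_2)$. The two remaining conditions, $u_1'(b_1-)=1$ and $u_2'(b_2-)=1$, give a $2\times2$ linear system in $(c_1,c_2)$. It is therefore enough to show that the homogeneous version, with right-hand sides $0$ in place of $1$, has only the trivial solution. The same setup covers $b_1=b_2$, where the middle interval is empty.

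\textbf{Uniqueness.} Let $w=(w_1,w_2)$ solve the homogeneous problem. Then $w_i(0)=0$, $w_i'(b_i-)=0$, and $w_i$ is constant (equal to $w_i(b_i)$) for $x\ge b_i$. Let $M=\max_{i}\max_{[0,b_i]}w_i$ and suppose $M>0$. Because the extension is constant, $M$ is also the supremum of $w_j$ over all $x\ge0$.

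Case 1: $M$ is attained at an interior point $x_0\in(0,b_i)$ for some component $i$. There $w_i''(x_0)\le0$, $w_i'(x_0)=0$ and $w_j(x_0)\le M$. The equation then gives
\[
0=a_iw_i''+q_i(w_j-w_i)-\delta w_i\le -\delta M<0,
\]
which is a contradiction.

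Case 2: the maximum is attained only at $x_0=b_i$. Then $w_i'(b_i-)=0$, and evaluating the ODE at $b_i$ gives $a_iw_i''(b_i-)=\delta M-q_i(w_j(b_i)-M)\ge\delta M>0$. So $w_i$ is strictly convex near $b_i$ with zero slope at $b_i$. Hence $w_i<w_i(b_i)$ just to the left and $w_i$ has a local minimum at $b_i$ on the left, not a maximum, which again is a contradiction. In the middle interval, where $w_2$ solves the scalar ODE with constant forcing $q_2w_1(b_1)$, the same argument applies.

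Applying the argument to $-w$ gives $w\equiv0$.

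\textbf{Regularity.} On each of the subintervals determined by $b_1$ and $b_2$ the solution is smooth, since it solves a constant-coefficient linear ODE with smooth right-hand side. The $C^1$ gluing holds by construction. Across $x=b_1$ inside $(0,b_2)$, the equation for $u_2$ has right-hand side $q_2u_1$, which is $C^1$ because $u_1'(b_1)=1$ on both sides. Hence $u_2''$ is continuous there, and differentiating the ODE once gives $u_2\in C^3((0,b_2))$.

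\textbf{Main obstacle.} The delicate step is Case 2 of the maximum principle at the barrier, since there the Neumann condition replaces the usual sign information. The argument above handles it through the strict convexity forced by $\delta>0$; the care needed is in checking that $w_j(b_i)\le M$ holds when $b_i$ lies in the affine region of the other component.
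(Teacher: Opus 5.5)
Your proposal is correct and follows essentially the same route as the paper: you shoot from $x=0$ with the two initial slopes as unknowns, reduce the barrier Neumann conditions to a square $2\times2$ system, prove injectivity by the maximum principle on the kernel (a maximum at $b_i$ is excluded via the left second derivative forced by $\delta_\ell>0$), and read off $C^3$ regularity from the ODE because $u_j$ is globally $C^1$. One small sign slip in Case 2: strict convexity with zero left slope gives $w_i>w_i(b_i)=M$ just to the left of $b_i$ (not $w_i<w_i(b_i)$), and it is this that contradicts the maximality of $M$.
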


\begin{proof}
Fix $\ell$ and suppress the superscripts. 
Start the coupled ODEs at $u_i(0)=0$, $u_i'(0)=\alpha_i$, for $i=1,2$.
When the smaller barrier is reached, continue that component with its first derivative held constant and solve the other equation up to the larger barrier. 
Linear ODE theory gives a unique solution for every $\alpha=(\alpha_1,\alpha_2)^\top$, and the derivatives at the two
barriers depend linearly on $\alpha$. 
Write
$$
M_\ell(b)\alpha=\bigl(u_1'(b_1-),u_2'(b_2-)\bigr)^\top.
$$
It suffices to show that the $2\times2$ matrix $M_\ell(b)$ is invertible.

Let $\alpha\in\ker M_\ell(b)$ and denote the associated solution by $h=(h_1,h_2)$. Then $h_i(0)=0$, $h_i'(b_i-)=0$, and each $h_i$ is constant for $x\geq b_i$.
Suppose that $m=\max_{k\in S,\,x\geq0}h_k(x)>0$.
The maximum is attained by some $h_k$ at a point $x_0\in(0,b_k]$; if it is attained in the constant extension, take $x_0=b_k$.
At that point,
$$
h_k'(x_0)=0,\qquad h_k''(x_0)\leq0,\qquad h_j(x_0)\leq m.
$$
When $x_0=b_k$, the second inequality follows from a Taylor expansion from the left and $h_k'(b_k-)=0$.
The ODE, or its left limit at $b_k$, therefore gives
$$
0=a_kh_k''(x_0)+\mu_kh_k'(x_0)
+q_k\bigl(h_j(x_0)-h_k(x_0)\bigr)-\delta_\ell h_k(x_0)
\leq-\delta_\ell m<0,
$$
a contradiction. 
Applying the same argument to $-h$ gives $h=0$, and hence $\alpha=0$.
Thus $M_\ell(b)$ is invertible, and $\alpha=M_\ell(b)^{-1}(1,1)^\top$ gives the unique solution with unit slopes at the barriers. 
Its linear extensions are exactly those in \eqref{eq:special-fixed-boundary}.

The construction gives global $C^1$ regularity. 
On $(0,b_i)$, the ODE expresses $u_i''$ in terms of $u_i,u_i'$ and the globally $C^1$ function $u_j$, as such,  $u_i\in C^3((0,b_i))$.
The solution is smooth between the barriers by linear ODE theory.
We use the same maximum-minimum argument below without repeating the calculation at the extremum.
\end{proof}

\begin{lemma}[Continuity of $\Gamma$]
\label{lem:existence-continuity}
The map $\Gamma:(0,+\infty)^2\to\mathbb R^2$ defined in \eqref{eq:special-free-boundary} is continuous. 
It is real analytic on $\{b_1<b_2\}$ and on $\{b_2<b_1\}$.
\end{lemma}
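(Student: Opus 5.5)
We prove continuity of $\Gamma$ through an explicit representation of $u^{(\ell),b}$ via the shooting construction in Lemma~\ref{lem:existence-fixed-boundary}. By the ODE evaluated from the left at $b_i$, and using $(u_i^{(\ell),b})'(b_i-)=1$,
$$
a_i\bigl(u_i^{(\ell),b}\bigr)''(b_i-)=\delta_\ell u_i^{(\ell),b}(b_i)-\mu_i-q_i\bigl(u_j^{(\ell),b}(b_i)-u_i^{(\ell),b}(b_i)\bigr).
$$
Thus $\Gamma_i(b)$ is a fixed linear combination of the values $u_k^{(\ell),b}(b_i)$, $k,\ell\in\{1,2\}$. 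It therefore suffices to show that $b\mapsto u_k^{(\ell),b}(b_i)$ is continuous on $(0,\infty)^2$ and real analytic off the diagonal.

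\textbf{Analyticity on $\{b_1<b_2\}$.} On this set, let $\Phi_\ell(x)$ be the $4\times4$ fundamental matrix of the constant-coefficient coupled system on $[0,b_1]$; its entries are entire in $x$. For $x\ge b_1$, $u_1$ is affine and $u_2$ solves the scalar ODE
$$
a_2u_2''+\mu_2u_2'-(q_2+\delta_\ell)u_2=-q_2\bigl(u_1(b_1)+x-b_1\bigr),
$$
whose solution is an explicit combination of exponentials and an affine particular solution. Its data at $b_1$ are linear in $\alpha$, with coefficients analytic in $b_1$. Composing these pieces shows that the entries of $M_\ell(b)$ and of the vector of barrier values are real analytic in $(b_1,b_2)$. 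Lemma~\ref{lem:existence-fixed-boundary} gives $\det M_\ell(b)\neq0$, so $\alpha=M_\ell(b)^{-1}(1,1)^\top$ is analytic. Hence every $u_k^{(\ell),b}(b_i)$, and so $\Gamma$, is analytic. The case $\{b_2<b_1\}$ follows by symmetry.

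\textbf{Continuity across the diagonal.} The main obstacle is the diagonal $b_1=b_2$, where the piecewise structure changes. We define the shooting map uniformly: for given $\alpha$, integrate the system with the switching rule ``component $k$ is frozen to slope $u_k'(b_k)$ beyond $b_k$.'' Write it as a first-order linear system $Y'=A(x;b)Y+c(x;b)$, where $A$ and $c$ are piecewise constant in $x$, with discontinuities only at $b_1,b_2$. As $b\to\bar b$, the coefficients converge in $L^1_{\mathrm{loc}}$, because they differ only on an interval of length $O(|b-\bar b|)$. Standard continuous dependence for Carath\'eodory linear ODEs, via Gronwall, then shows that $Y(\cdot;b,\alpha)$ converges uniformly on compacts, linearly in $\alpha$. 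This holds even when $\bar b$ lies on the diagonal. Consequently $M_\ell(b)$ and the barrier evaluations are continuous on all of $(0,\infty)^2$.

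Since each $M_\ell(b)$ is invertible everywhere, $M_\ell(b)^{-1}$ is continuous. Hence $u^{(\ell),b}(b_i)$ is continuous, and therefore so is $\Gamma$.

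One detail needs care. The one-sided derivatives $u_i'(b_i-)$ and evaluations at a moving point $b_i$ must converge, not just the solutions at fixed $x$. This follows from the uniform convergence of $Y$ together with the equicontinuity of $Y$, since $Y'$ is locally bounded uniformly in $b$ near $\bar b$.
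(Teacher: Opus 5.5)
Your proposal is correct and follows essentially the paper's route: the boundary identity \eqref{eq:existence-boundary-identity} reduces $\Gamma$ to barrier values of $u^{(\ell),b}$, analyticity off the diagonal comes from the constant-coefficient propagators together with the invertibility of $M_\ell(b)$, and continuity at $b_1=b_2$ comes from convergence of the shooting data. Your $L^1$/Gronwall continuous-dependence argument is a uniform version of the paper's remark that the propagator over the gap between the barriers tends to the identity, and the only slip is cosmetic: in your analyticity step the frozen component should be extended with slope $u_1'(b_1)$ rather than $1$ (as in your later shooting rule) for the barrier data to be linear in $\alpha$, which does not affect the conclusion.
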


\begin{proof}
For a fixed ordering of the barriers, the construction above uses constant-coefficient ODEs over intervals whose lengths depend linearly on $b$. 
Therefore, their solution matrices $M_\ell(b)$ are real analytic in $b$. 
Lemma~\ref{lem:existence-fixed-boundary} shows that the initial first derivative $\alpha=M_\ell(b)^{-1}(1,1)^\top$ is real analytic and thus $\Gamma$ is real analytic on $\{b_1<b_2\}$ and on $\{b_2<b_1\}$.

As $b\to(r,r)$, $r>0$, the solution matrix over the interval between the barriers tends to the identity. 
Both orderings therefore converge to the same problem with $b_1=b_2=r$. 
Its matrix $M_\ell(r,r)$ is invertible,  thus the initial first derivative $\alpha=M_\ell(b)^{-1}(1,1)^\top$ and boundary values $(u_i^{(\ell),b}(b_i), u_j^{(\ell),b}(b_i))$ are continuous also on the diagonal $\{b_1=b_2\}$. 
Finally, the ODE at $b_i$ gives
\begin{equation}
\label{eq:existence-boundary-identity}
a_i\bigl(u_i^{(\ell),b}\bigr)''(b_i-)
=-\mu_i+(q_i+\delta_\ell)u_i^{(\ell),b}(b_i)
-q_i u_j^{(\ell),b}(b_i).
\end{equation}
Therefore $\Gamma$ is continuous on the diagonal $\{b_1=b_2\}$ and furthermore $\Gamma$ is continuous on $(0,+\infty)^2$.
\end{proof}

\begin{lemma}[Uniform estimates]
\label{lem:existence-uniform}
There is a constant $C\geq1$, independent of $b\in(0,+\infty)^2$ and $\ell\in\{1,2\}$, such that
\begin{equation}
\label{eq:existence-uniform-bounds}
0\leq u_i^{(\ell),b}(x)\leq Cx,\qquad
0\leq\bigl(u_i^{(\ell),b}\bigr)'(x)\leq C,
\qquad x\geq0,\quad i=1,2.
\end{equation}
Moreover,
$$
\left|\bigl(u_i^{(\ell),b}\bigr)''(x)\right|
+\left|\bigl(u_i^{(\ell),b}\bigr)'''(x)\right|
\leq C,\qquad 0\leq x\leq b_i.
$$
\end{lemma}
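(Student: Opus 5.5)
The plan is a maximum-principle argument applied to derivatives, in the same spirit as the proof of Lemma~\ref{lem:existence-fixed-boundary}. First I will show that the derivatives $v_i:=(u_i^{(\ell),b})'$ satisfy $0\le v_i\le C$. Then I will integrate from $u_i(0)=0$ to get $0\le u_i(x)\le Cx$. Finally, I will read off bounds on $u_i''$ and $u_i'''$ from the ODE.

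\textbf{Step 1: sign of $u_i$ and $u_i'$.} Apply the minimum argument of Lemma~\ref{lem:existence-fixed-boundary} to $u=(u_1,u_2)$. If $\min_{k,x} u_k<0$, the minimum is attained at some $x_0\in(0,b_k]$. At an interior minimum $u_k'=0$, while at $b_k$ we have $u_k'(b_k-)=1>0$, so the minimum is not at $b_k$ from the left. Evaluating the ODE at $x_0$ then gives $0\ge -\delta_\ell u_k(x_0)>0$, a contradiction, so $u_i\ge0$.

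Next, differentiate the ODE on each interval of smoothness:
\[
a_i v_i''+\mu_i v_i'+q_i(v_j-v_i)-\delta_\ell v_i=0 \quad\text{on }(0,b_i),
\]
where $v_j\equiv 1$ wherever $x\ge b_j$. The boundary data are $v_i(b_i-)=1$, and, by the ODE at $x=0$, $a_iv_i'(0)=-\mu_iv_i(0)$. If $\min v<0$ were attained at $x=0$, then $v_i(0)<0$ would force $v_i'(0)>0$, so $v_i$ increases and $0$ is not a minimum point. An interior negative minimum is excluded exactly as in Step~1, using the differentiated ODE. Hence $v_i\ge0$, since $v\equiv1$ beyond the barriers.

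\textbf{Step 2: upper bound on $u'$ (the main obstacle).} This is where uniformity in $b$ is delicate, because $v$ may blow up near $x=0$ if $b$ is small. The same argument gives: if $\max v>1$, it cannot be attained at an interior point, because there the differentiated ODE yields $0\le -\delta_\ell\max v<0$. So the maximum sits at $x=0$. At the origin the identity $a_iv_i'(0)=-\mu_iv_i(0)<0$ is consistent with a maximum there, so no contradiction arises and a quantitative estimate is needed.

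I will compare with an explicit supersolution. Take $w_i(x)=1+K e^{-\lambda x}$, with $\lambda$ large enough that $a_i\lambda^2-\mu_i\lambda-\delta_\ell-q_i>0$ fails. Then choose $K$ using the value at the origin. I must control $v_i(0)=\alpha_i$ through the ODE at $0$: there $a_iu_i''(0)+\mu_i\alpha_i=0$ since $u_i(0)=u_j(0)=0$.

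A cleaner route bounds $u$ instead. The function $u_i^{(\ell)}(b_i)$ is bounded above by the payoff of paying everything immediately plus the expected dividends with no ruin. More concretely, the comparison function $\psi(x)=x+\mu_{\max}/\delta_\ell$ is a supersolution of the $u$-system: $a\psi''+\mu\psi'-\delta\psi\le0$, with $\psi'\ge1$ beyond the barriers. This gives $u_i(x)\le x+\mu_{\max}/\delta_1$. Concavity-type control near the origin then converts this linear bound into a derivative bound. From $u_i\le x+c$, $u_i(0)=0$, $v_i$ maximal at $0$, and $|u_i''|\le(\mu_{\max}v_{\max}+(q+\delta_2)(x+c))/a_{\min}$, a Taylor argument on $[0,1]$ bounds $v_{\max}$ by a constant independent of $b$. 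I will treat $b_i<1$ separately: there $v\le$ its value at $0$ and the relation $u_i(b_i)\le b_i\max v$ closes the estimate.

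\textbf{Step 3: second and third derivatives.} With $0\le u\le Cx$ and $0\le u'\le C$ established uniformly, the ODE gives
\[
|u_i''|\le a_i^{-1}\bigl(\mu_iC+(q_i+\delta_\ell)Cx+q_iCx\bigr),
\]
which is bounded on $[0,b_i]$ except for the growth in $x$. To remove the $x$-dependence, I will use the sharper bound $u_i\le x+c$ from the $\psi$-comparison together with $u_j\ge u_j(b_i)-C|\cdot|$, so that $q_i(u_j-u_i)-\delta_\ell u_i$ stays bounded on $[0,b_i]$. This uses $u_j-u_i$ bounded, which follows from a max-principle for the difference. Differentiating the ODE once more gives $|u_i'''|\le C$ from the bounds on $u',u''$. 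Finally, I take $C$ as the maximum over $\ell=1,2$, which is finite because only $\delta_1,\delta_2$ enter.
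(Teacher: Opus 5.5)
Your Steps~1 and~2 follow the paper's outline: the minimum argument for $u\geq0$ and $v\geq0$, the comparison $u_i\le x+\mu_{\max}/\delta_1$, the maximum principle reducing everything to $v_1(0),v_2(0)$, and a near-origin estimate split by the size of $b_i$. Step~3, however, has a genuine gap. You bound $u_i''$ from the original ODE by claiming that $q_i(u_j-u_i)-\delta_\ell u_i$ is bounded on $[0,b_i]$, because $u_i\le x+c$ and $u_j-u_i$ is bounded. Neither ingredient is uniform in $b$. Fix $b_j$ and let $b_i\to\infty$. On $(b_j,b_i)$ you have $u_j(x)=u_j(b_j)+x-b_j$, and $u_i$ solves $a_iu_i''+\mu_iu_i'-(q_i+\delta_\ell)u_i=-q_iu_j$. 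Its solutions with the given boundary data are $\frac{q_i}{q_i+\delta_\ell}x+O(1)$: an affine particular solution plus bounded exponential modes. Hence $\delta_\ell u_i$ and $u_j-u_i=\frac{\delta_\ell}{q_i+\delta_\ell}x+O(1)$ both grow linearly up to $x=b_i$. The combination stays bounded only because the linear terms cancel exactly, via $q_i\frac{\delta_\ell}{q_i+\delta_\ell}=\delta_\ell\frac{q_i}{q_i+\delta_\ell}$. No maximum principle for $u_j-u_i$ can give a $b$-independent bound.

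The missing idea is to bound $u_i''=v_i'$ through the \emph{differentiated} system, which involves only the already bounded $v$'s. Start from $a_iv_i''+\mu_iv_i'=(q_i+\delta_\ell)v_i-q_iv_j$ and use the integrating factor $e^{\gamma_ix}$ with $\gamma_i=\mu_i/a_i$, together with the initial value $v_i'(0)=-\gamma_iv_i(0)$. This gives $|v_i'(x)|\le\gamma_iC+(2q_i+\delta_2)C/\mu_i$ on $[0,b_i]$, and the same equation then bounds $v_i''=u_i'''$.

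Step~2 also needs two repairs before it closes. First, your bound on $|u_i''|$ contains $\mu_{\max}v_{\max}/a_{\min}$, so a Taylor expansion over all of $[0,1]$ yields nothing when $\mu_{\max}/a_{\min}$ is large. Either restrict to an interval of length at most $a_{\min}/(2\mu_{\max})$, or, as the paper does, absorb the drift term with the integrating factor to get $|v_i(x)-e^{-\gamma_ix}v_i(0)|\le Kx$ on $[0,\min\{1,b_i\}]$. Second, in the small-barrier case, $u_i(b_i)\le b_i\max v$ is an upper bound on $u$ and says nothing about how large $\max v$ can be. What closes this case is the boundary condition $v_i(b_i-)=1$. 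It gives $e^{-\gamma_ib_i}v_i(0)\le1+Kb_i$, and hence $v_i(0)\le e^{\gamma_i}(1+K)$ for $b_i\le1$.
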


\begin{proof}
Fix $b,\ell$ and write $u_i=u_i^{(\ell),b}$.
A negative minimum of $u$ cannot occur at zero or in a linear extension, and the maximum-minimum argument excludes it in the ODE region. Hence $u_i\geq0$.
Set $C_0=\max_k\mu_k/\delta_1$. Comparison with $x+C_0$ gives
$$
0\leq u_i(x)\leq x+C_0,\qquad x\geq0.
$$
Indeed, $u_i-x-C_0$ is negative at zero, has zero derivative at $b_i$, and its ODE has nonnegative right-hand side because $\delta_\ell(x+C_0)-\mu_i\geq0$.
A positive maximum is therefore excluded by the same argument.

Put $p_i=u_i'$ and extend $p_i=1$ for $x\geq b_i$.
Differentiating the ODE and taking its limit at zero gives
\begin{equation}
\label{eq:existence-derivative-system}
\begin{aligned}
&a_ip_i''+\mu_ip_i'+q_i(p_j-p_i)-\delta_\ell p_i=0,
&&0<x<b_i,\\
&p_i(b_i)=1,\qquad a_ip_i'(0)+\mu_ip_i(0)=0.
\end{aligned}
\end{equation}
Using $u_i\geq0$ and $u_i(0)=0$, we have $p_i(0)\geq0$.
The minimum argument applied to \eqref{eq:existence-derivative-system} now gives $p_i\geq0$.

We next bound $p_i(0)$ uniformly. 
Let $\gamma_i=\mu_i/a_i>0$.
The original ODE gives, for $0\leq x\leq\min\{1,b_i\}$,
$$
p_i(x)=e^{-\gamma_i x}p_i(0)
+\frac1{a_i}\int_0^x e^{-\gamma_i(x-y)}
\bigl[(q_i+\delta_\ell)u_i(y)-q_i u_j(y)\bigr]\,dy.
$$
Using  $0\leq u_k(y)\leq1+C_0$ for $0\leq y\leq1$, we have
$$
\left|p_i(x)-e^{-\gamma_i x}p_i(0)\right|
\leq \frac{(2q_i+\delta_2)(1+C_0)}{a_i}\,x
\leq Kx,\qquad 0\leq x\leq\min\{1,b_i\},
$$
where $K$ is independent of $b,\ell,i$.
If $b_i\leq1$, using $p_i(b_i)=1$ gives
$$
p_i(0)\leq e^{\gamma_i b_i}(1+Kb_i)
\leq e^{\gamma_i}(1+K).
$$
If $b_i>1$, integrating $e^{-\gamma_i x}p_i(0)\leq p_i(x)+Kx$ over $[0,1]$ gives
$$
\frac{1-e^{-\gamma_i}}{\gamma_i}\,p_i(0)
\leq \int_0^1p_i(x)\,dx+\frac K2
=u_i(1)+\frac K2
\leq1+C_0+\frac K2,
$$
and hence
$$
p_i(0)\leq
\frac{\gamma_i}{1-e^{-\gamma_i}}
\left(1+C_0+\frac K2\right).
$$
Taking the maximum of these bounds over $i=1,2$ gives $p_i(0)\leq C$, uniformly in $b,\ell,i$.
The maximum argument then gives $p_i\leq\max\{1,p_1(0),p_2(0)\}\leq C$.
Integrating $p_i$ from zero proves \eqref{eq:existence-uniform-bounds}.

Integrating \eqref{eq:existence-derivative-system} gives, for $0\leq x\leq b_i$,
$$
|p_i'(x)|\leq e^{-\gamma_i x}|p_i'(0)|
+\frac{C}{\gamma_i}(1-e^{-\gamma_i x})\leq C,
$$
where the last bound uses $p_i'(0)=-\gamma_i p_i(0)$.
The same ODE then bounds $p_i''$. 
This proves the remaining estimates.
\end{proof}

\begin{lemma}[Sign for small barriers]
\label{lem:existence-small}
Let $C$ be as in Lemma~\ref{lem:existence-uniform} and set
$$
r=\min_{i=1,2}\frac{\mu_i}{2C(q_i+\delta_2)}>0.
$$
For each $i\in S$, $0<b_i\leq r$ implies $\Gamma_i(b)<0$ for every $b_j>0$.
\end{lemma}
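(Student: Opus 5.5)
We use the boundary identity \eqref{eq:existence-boundary-identity} together with the uniform estimates of Lemma~\ref{lem:existence-uniform}. Fix $i\in S$, $0<b_i\leq r$, an arbitrary $b_j>0$, and $\ell\in\{1,2\}$, and write $u_k=u_k^{(\ell),b}$. By \eqref{eq:existence-boundary-identity},
$$
a_i u_i''(b_i-)=-\mu_i+(q_i+\delta_\ell)u_i(b_i)-q_iu_j(b_i).
$$
By Lemma~\ref{lem:existence-uniform}, $u_j(b_i)\geq0$, so the last term can be dropped from above. It also gives $u_i(b_i)\leq Cb_i$, and $\delta_\ell\leq\delta_2$. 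Hence
$$
a_iu_i''(b_i-)\leq-\mu_i+(q_i+\delta_2)Cb_i\leq-\mu_i+(q_i+\delta_2)Cr\leq-\mu_i+\tfrac{\mu_i}{2}=-\tfrac{\mu_i}{2}<0,
$$
where we used $r\leq\mu_i/(2C(q_i+\delta_2))$ by the definition of $r$.

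The point $b_i$ is the endpoint of the ODE region for $u_i$. The bound $u_i\leq Cx$ holds for all $x\geq0$ and all barrier configurations, including $b_j<b_i$ and $b_j=b_i$. Likewise $u_j\geq0$ holds globally, including on the affine extension of $u_j$ when $b_j<b_i$. Both inequalities are therefore valid at $x=b_i$ in every ordering of the barriers. The bound is uniform in $\ell$ because $C$ in Lemma~\ref{lem:existence-uniform} is independent of $b$ and $\ell$.

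Multiplying by the weights $\omega_\ell>0$ and summing over $\ell$ gives
$$
\Gamma_i(b)=\sum_{\ell}\omega_\ell u_i^{(\ell),b\,\prime\prime}(b_i-)\leq-\frac{\mu_i}{2a_i}\sum_\ell\omega_\ell=-\frac{\mu_i}{2a_i}<0.
$$

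The only step needing care is checking that the bounds hold uniformly in $b_j$ and in the barrier ordering; Lemma~\ref{lem:existence-uniform} already supplies this. The argument is otherwise a direct estimate.
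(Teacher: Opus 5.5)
Your proof is correct and follows the paper's own argument: the boundary identity \eqref{eq:existence-boundary-identity} combined with $u_j\geq0$, $u_i(b_i)\leq Cb_i$ and $\delta_\ell\leq\delta_2$ gives $a_i\bigl(u_i^{(\ell),b}\bigr)''(b_i-)\leq-\mu_i/2$ for each $\ell$, and the nonnegative weights $\omega_\ell$ then give $\Gamma_i(b)<0$. Your check that these bounds hold at $x=b_i$ in every barrier ordering, including on the affine extension of $u_j$, is a detail the paper leaves implicit.
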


\begin{proof}
By \eqref{eq:existence-boundary-identity} and \eqref{eq:existence-uniform-bounds},
$$
a_i\bigl(u_i^{(\ell),b}\bigr)''(b_i-)
\leq-\mu_i+C(q_i+\delta_2)b_i\leq-\frac{\mu_i}{2}<0.
$$
The conclusion follows as $\omega_1,\omega_2\geq0$.
\end{proof}

\begin{lemma}[Sign for large barriers]
\label{lem:existence-large}
For each $i\in S$, there is $L_i>0$ such that
$$
b_i\geq L_i,\qquad 0<b_j\leq b_i
\quad\Longrightarrow\quad \Gamma_i(b)>0.
$$
\end{lemma}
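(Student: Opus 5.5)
We argue by contradiction and compactness, working with the derivative $p_i=(u_i^{(\ell),b})'$ of Lemma~\ref{lem:existence-uniform}. Since $(u_i^{(\ell),b})''(b_i-)=p_i'(b_i-)$, we have $\Gamma_i(b)=\sum_\ell\omega_\ell p_i'(b_i-)$. It therefore suffices to show that, for each $\ell$, $p_i'(b_i-)$ is bounded below by a positive constant once $b_i$ is large and $b_j\le b_i$.

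Suppose the claim fails for some $i$. Then there are $b^n$ with $b^n_i\to\infty$, $0<b^n_j\le b^n_i$ and $\Gamma_i(b^n)\le 0$. Passing to a subsequence, for some $\ell$ we have $\liminf_n p_i'(b_i^n-)\le0$ for $u^{(\ell),b^n}$ (we can treat both $\ell$ at once).

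\textbf{Step 1 (shift and pass to the limit).} Shift coordinates by $y=x-b_i^n$, so $y\in(-b_i^n,0]$. Set $d_n=b_i^n-b_j^n\in[0,b_i^n)$ and pass to a further subsequence with $d_n\to d\in[0,\infty]$.
\begin{itemize}
\item By Lemma~\ref{lem:existence-uniform}, the shifted $p^n_k$ are bounded in $[0,C]$. So are their first two derivatives on each ODE interval; the interval for $p_j$ is $y<-d_n$, and $p_j^n\equiv1$ for $y\ge -d_n$.
\item Arzel\`a--Ascoli gives $C^1_{\rm loc}$ convergence on $(-\infty,0]$ to a limit $p=(p_1,p_2)$. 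Here $p_i$ is $C^2$ on $(-\infty,0]$ and solves $a_ip_i''+\mu_ip_i'+q_i(p_j-p_i)-\delta_\ell p_i=0$ there.
\item The component $p_j$ solves its ODE on $(-\infty,-d)$ and equals $1$ on $[-d,0]$. If $d=\infty$, then $p_j\equiv1$ and the system reduces to a scalar equation for $p_i$ with source $q_i$.
\item The limits satisfy $0\le p_k\le C$, $p_i(0)=1$, and $p_i'(0-)\le0$. The last uses the uniform bound on the third derivative of $u_i$, i.e. on $p_i''$, which makes $p_i'(b_i-)$ pass to the limit.
\end{itemize}

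\textbf{Step 2 (bound $\sup p\le1$).} Let $M=\sup_{k,\,y\le0}p_k(y)$ and suppose $M>1$.
\begin{itemize}
\item If $M$ is attained at some $y_0$ in an ODE region, the maximum-minimum argument of Lemma~\ref{lem:existence-fixed-boundary} gives $0\le-\delta_\ell M<0$, a contradiction. It cannot be attained at $y=0$ or in the constant piece, because there the value is $1$.
\item If $M$ is only approached as $y\to-\infty$, translate once more along a maximizing sequence. Using the same uniform bounds, we obtain a bounded entire solution of the coupled constant-coefficient system on $\mathbb R$ that attains its maximum $M>0$ in the interior, which is again impossible.
\end{itemize}
Hence $p_k\le1$ everywhere. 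In particular $p_i\le1=p_i(0)$, which forces $p_i'(0-)\ge0$, and together with Step 1 we get $p_i'(0-)=0$.

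\textbf{Step 3 (contradiction at the barrier).} Since $y=0$ is a maximum of $p_i$ approached from the left with $p_i'(0-)=0$, Taylor's formula gives $p_i''(0-)\le0$. On the other hand, the ODE at $y=0$ gives
$$a_ip_i''(0-)=(q_i+\delta_\ell)p_i(0)-q_ip_j(0)-\mu_ip_i'(0-)=q_i+\delta_\ell-q_ip_j(0)\ge\delta_\ell>0,$$
because $p_j(0)\le1$. This contradiction shows that $\liminf p_i'(b_i^n-)>0$ for each $\ell$, so $\Gamma_i(b^n)>0$ for large $n$, and $L_i$ exists.

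The main obstacle is Step 2, and in particular the case where the supremum escapes to $-\infty$. This case needs the second translation and compactness argument, together with a check that the limiting entire system still satisfies the maximum principle; this holds because the zero-order coefficient $-\delta_\ell<0$ and the coupling is cooperative. A second delicate point is passing $p_i'(b_i-)$ to the limit uniformly in $d_n$, including $d_n\to\infty$ or $d_n\to0$. This relies on the third-derivative bound of Lemma~\ref{lem:existence-uniform} and on the continuity across the diagonal established in Lemma~\ref{lem:existence-continuity}.
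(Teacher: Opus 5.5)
Your argument is correct, and it reaches the contradiction by a more unified route than the paper.

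\textbf{The paper's route.} The paper splits into the cases $d_n\to d<\infty$ and $d_n\to\infty$. In the finite case it shifts at the \emph{smaller} barrier $s_n=b_j^{(n)}$. It then builds the exponential supersolution $v_n=C(e^{-\kappa x}+e^{-\kappa(s_n-x)})$ to show that the limit decays like $Ce^{\kappa z}$ at $-\infty$; this is what licenses the maximum argument for $\bar p_k-1$ on a half-line. Afterwards it must continue $\bar p_i$ across $[0,d]$ by the scalar ODE, identify $\bar p_i(d)=1$ through a Lipschitz estimate, and pass $(p_i^{(n)})'(b_i^{(n)}-)$ to the limit by continuous dependence. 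In the infinite case it uses the explicit solution with roots $\lambda_\pm$ to obtain the limiting slope $(1-c)\lambda_+>0$.

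\textbf{Your route.} You shift at the \emph{larger} barrier, so the barrier sits at $y=0$ for every $n$. The uniform bound on $u_i'''$ from Lemma~\ref{lem:existence-uniform} then gives $C^1$ convergence of $p_i^n$ up to $y=0$. This delivers $p_i(0)=1$ and $p_i'(0-)=\lim(p_i^n)'(b_i^n-)$ directly. You replace the exponential barrier by a second translation along a maximizing sequence. This is a Liouville-type argument that uses only the translation invariance of the constant-coefficient system and the sign $-\delta_\ell<0$.

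\textbf{Comparison.} Your version treats $d=0$, $0<d<\infty$ and $d=\infty$ at once and avoids all explicit computation. The paper's version yields quantitative information in exchange: exponential decay of the marginal values away from the barriers, and the explicit limiting slope when the barriers separate.

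\textbf{Two small remarks.}
\begin{itemize}
\item Your appeal to Lemma~\ref{lem:existence-continuity} is unnecessary. The uniform bounds of Lemma~\ref{lem:existence-uniform} already hold on the diagonal, so $d_n\to0$ needs nothing extra.
\item The $C^1_{\rm loc}$ convergence you state on $(-\infty,0]$ holds for $p_i$ but not for $p_j$. The function $p_j^n$ has a corner at $y=-d_n$ whenever $(u_j^{(\ell),b})''(b_j-)\neq0$. For $p_j$ you only get locally uniform convergence on $(-\infty,0]$, and $C^1$ convergence away from $-d$. That is all your argument actually uses.
\end{itemize}
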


\begin{proof}
It suffices to prove the assertion for each component $\bigl(u_i^{(\ell),b}\bigr)''(b_i-)$ and then take the larger of the two thresholds. 
Fix $i,\ell$. 
Suppose that there are barriers $b^{(n)}$ such that
$$
b_i^{(n)}\to\infty,\qquad
0<b_j^{(n)}\leq b_i^{(n)},\qquad
\bigl(u_i^{(\ell),b^{(n)}}\bigr)''(b_i^{(n)}-)\leq0.
$$
Write $s_n=b_j^{(n)}$, $d_n=b_i^{(n)}-b_j^{(n)}$, and $p_k^{(n)}=\bigl(u_k^{(\ell),b^{(n)}}\bigr)'$.
These derivatives satisfy \eqref{eq:existence-derivative-system}.
On $s_n<x<b_i^{(n)}$, $p_j^{(n)}=1$ and
\begin{equation}
\label{eq:existence-intermediate-derivative}
a_i(p_i^{(n)})''+\mu_i(p_i^{(n)})'
-(q_i+\delta_\ell)p_i^{(n)}+q_i=0.
\end{equation}
After taking a subsequence, either $d_n\to d<\infty$ or $d_n\to\infty$.

First suppose that $d_n\to d<\infty$. 
Then $s_n\to\infty$.
Choose $\kappa>0$ sufficiently small that $a_k\kappa^2+\mu_k\kappa<\delta_\ell$ for $k=i,j$, and set
$$
v_n(x)=C\bigl(e^{-\kappa x}+e^{-\kappa(s_n-x)}\bigr),
\qquad 0\leq x\leq s_n,
$$
where $C$ is the uniform bound in Lemma~\ref{lem:existence-uniform}. 
At the endpoints,
$$
v_n(0)=v_n(s_n)=C(1+e^{-\kappa s_n})
\geq\max\{p_k^{(n)}(0),p_k^{(n)}(s_n)\},
\qquad k=i,j.
$$
Moreover,
$$
\begin{aligned}
a_kv_n''+\mu_kv_n'-\delta_\ell v_n
=C(a_k\kappa^2-\mu_k\kappa-\delta_\ell)e^{-\kappa x}
+C(a_k\kappa^2+\mu_k\kappa-\delta_\ell)
e^{-\kappa(s_n-x)}
\leq0.
\end{aligned}
$$
The differences $p_k^{(n)}-v_n$ thus have nonpositive endpoint values and satisfy the corresponding coupled differential inequalities similar to \eqref{eq:existence-derivative-system} with nonnegative right-hand sides. 
The maximum-minimum argument excludes a positive maximum, giving
$$
0\leq p_k^{(n)}(x)
\leq C\bigl(e^{-\kappa x}+e^{-\kappa(s_n-x)}\bigr),
\qquad 0\leq x\leq s_n,\quad k=i,j.
$$

By Lemma~\ref{lem:existence-uniform}, there exists a constant $C$, independent of $n$, such that
$$
|p_k^{(n)}(x)|+|(p_k^{(n)})'(x)|+|(p_k^{(n)})''(x)|
\leq C,\qquad 0\leq x\leq s_n,\quad k=i,j,
$$
where derivatives at the endpoints are one-sided.
After the variable shift $z=x-s_n$, the common ODE interval becomes $[-s_n,0]$. 
As $s_n\to\infty$, every fixed interval $[-R,0]$ is contained in $[-s_n,0]$ for all sufficiently large $n$.
The bounds above imply that the shifted functions and their first derivatives are uniformly bounded and equicontinuous on $[-R,0]$.
The Arzel\`a-Ascoli theorem and a diagonal subsequence argument therefore give functions $\bar p_i,\bar p_j$ such that
$$
p_k^{(n)}(s_n+\cdot)\longrightarrow\bar p_k
\quad\text{in }C^1([-R,0])
\quad\text{for every }R>0,\qquad k=i,j.
$$
Writing the derivative equations in integral form and passing to the limit shows that $\bar p_i,\bar p_j$ are $C^2$ on $(-\infty,0)$ and satisfy
$$
a_k\bar p_k''+\mu_k\bar p_k'
+q_k(\bar p_{3-k}-\bar p_k)-\delta_\ell\bar p_k=0,
\qquad z<0,\quad k=i,j.
$$
Moreover, $p_j^{(n)}(s_n)=1$ gives $\bar p_j(0)=1$.

If $d>0$, extend $\bar p_j$ by $\bar p_j(z)=1$ for $0\leq z\leq d$.
Define $\bar p_i$ on $[0,d]$ as the solution of
$$
a_i\bar p_i''+\mu_i\bar p_i'
-(q_i+\delta_\ell)\bar p_i+q_i=0
$$
with initial value and derivative at zero equal to the left limits already constructed. 
The shifted functions $p_i^{(n)}(s_n+\cdot)$ satisfy the same scalar equation on $[0,d_n]$.
Continuous dependence of the scalar linear ODE on the initial data therefore gives
$$
p_i^{(n)}(s_n+\cdot)\longrightarrow\bar p_i
\quad\text{in }C^1([0,r]),
\qquad 0<r<d.
$$
To identify the value at $d$, use $p_i^{(n)}(s_n+d_n)=1$ and the uniform bound on $(p_i^{(n)})'$:
$$
\left|1-p_i^{(n)}(s_n+z)\right|
\leq C(d_n-z),\qquad 0\leq z\leq d_n.
$$
For each fixed $z<d$, letting $n\to\infty$ gives $|1-\bar p_i(z)|\leq C(d-z)$.
Then letting $z\uparrow d$ gives $\bar p_i(d)=1$.
If $d=0$, the same estimate at $z=0$ gives
$|1-p_i^{(n)}(s_n)|\leq Cd_n\to0$, and thus $\bar p_i(0)=1$.

Finally, for each fixed $z\leq0$, the preceding exponential bound gives, for all sufficiently large $n$,
$$
0\leq p_k^{(n)}(s_n+z)
\leq C\left(e^{-\kappa(s_n+z)}+e^{\kappa z}\right),
\qquad k=i,j.
$$
Then letting $n\to\infty$ gives
$$
0\leq\bar p_k(z)\leq Ce^{\kappa z},
\qquad z\leq0,\quad k=i,j,
$$
and hence $\lim_{z\to-\infty}\bar p_k(z)=0$.
If $d=0$, both components equal one at zero.
For $d>0$, continuity of $\bar p_i,\bar p_i'$ at zero and
$\bar p_j(0)=1$ give
$$
\bar p_i''(0-)
=\frac{-\mu_i\bar p_i'(0)+(q_i+\delta_\ell)\bar p_i(0)-q_i}{a_i}
=\bar p_i''(0+),
$$
and thus $\bar p_i$ is $C^2$ across zero.

The maximum argument applied to $\bar p_k-1$ gives $0\leq\bar p_i\leq1$ on $(-\infty,d]$ and $0\leq\bar p_j\leq1$ on $(-\infty,0]$.
Hence $\bar p_i'(d-)\geq0$.
If $\bar p_i'(d-)=0$, the ODE at $d$ give $a_i\bar p_i''(d-)=\delta_\ell$. Then a Taylor expansion gives
$$
\bar p_i(d-h)=1+\frac{\delta_\ell}{2a_i}h^2+o(h^2)>1
$$
for small $h>0$, a contradiction. 
Thus $\bar p_i'(d-)>0$.
Continuous dependence for \eqref{eq:existence-intermediate-derivative}, together with the
convergence at $z=0$, gives
$$
(p_i^{(n)})'(b_i^{(n)}-)\longrightarrow\bar p_i'(d-)>0.
$$
This contradicts the assumed nonpositive values.

Now suppose that $d_n\to+\infty$. 
Put
$$
c=\frac{q_i}{q_i+\delta_\ell},\qquad
\lambda_\pm=
\frac{-\mu_i\pm\sqrt{\mu_i^2+4a_i(q_i+\delta_\ell)}}{2a_i},
\qquad \lambda_-<0<\lambda_+.
$$
The scalar equation \eqref{eq:existence-intermediate-derivative} gives
$$
p_i^{(n)}(b_i^{(n)}+y)
=c+A_ne^{\lambda_+y}+B_ne^{\lambda_-y},
\qquad -d_n\leq y\leq0,
$$
where $A_n+B_n=1-c$ because $p_i^{(n)}(b_i^{(n)})=1$.
Evaluating at $y=-d_n$ gives
$$
B_n=
\frac{p_i^{(n)}(s_n)-c-(1-c)e^{-\lambda_+d_n}}
{e^{-\lambda_-d_n}-e^{-\lambda_+d_n}}
\longrightarrow0,
$$
as $p_i^{(n)}(s_n)$ is uniformly bounded.
Consequently,
$$
(p_i^{(n)})'(b_i^{(n)}-)
=(1-c)\lambda_++(\lambda_--\lambda_+)B_n
\longrightarrow(1-c)\lambda_+>0,
$$
which is again a contradiction.

Thus each component $\bigl(u_i^{(\ell),b}\bigr)''(b_i-)$ is positive for a sufficiently large $b_i$, uniformly over $0<b_j\leq b_i$. 
Therefore, their weighted sum $\Gamma_i$ is positive.
\end{proof}

\begin{proof}[Proof of Theorem~\ref{thm:existence-free-boundaries}]
Choose $B>\max\{r,L_1,L_2\}$, where $r$ and $L_i$ are supplied by Lemmas~\ref{lem:existence-small} and \ref{lem:existence-large}.
For every $t\in[r,B]$, these lemmas give
$$
\Gamma_1(r,t)<0<\Gamma_1(B,t),\qquad
\Gamma_2(t,r)<0<\Gamma_2(t,B).
$$
By Lemma~\ref{lem:existence-continuity}, $\Gamma$ is continuous on $[r,B]^2$. 
The Poincar\'e-Miranda theorem therefore gives $b^*\in[r,B]^2$ with $\Gamma(b^*)=0$.
\end{proof}

\subsection{Verification of the variational inequalities}
\label{subsec:verification-variational-inequalities}

Fix a free boundary pair $b^*$ from Theorem~\ref{thm:existence-free-boundaries}, and write $u_i^{(\ell)}=u_i^{(\ell),b^*}$.
Let $V_i=V_i^{b^*}$ and $f_i=f_i^{b^*}$ be given by \eqref{eq:special-ansatz}.
We now verify the remaining inequalities in \eqref{eq:special-remaining-inequalities} and identify the regions generated by $V$.

\begin{theorem}[Verification of the variational inequalities]
\label{thm:vi-verification}
For each $i\in S$, $V_i\in C^2((0,+\infty))$ and
\begin{equation}
\label{eq:vi-second-derivative-sign}
V_i''(x)\leq0,\qquad 0\leq x\leq b_i^*.
\end{equation}
Moreover,
\begin{equation}
\label{eq:vi-regions}
\{x>0:V_i'(x)>1\}=(0,b_i^*),\qquad
\{x>0:V_i'(x)=1\}=[b_i^*,\infty),
\end{equation}
and
\begin{equation}
\label{eq:vi-action-inequality}
(\mathcal LV)_i(x)+f_{i,s}(x,0)\leq0,
\qquad x\geq b_i^*.
\end{equation}
Consequently, $(V,f)$ satisfies \eqref{eq:verification-hjb}, and its waiting and payout regions are those of $\Xi^{b^*}$ in \eqref{eq:special-barrier-law}.
\end{theorem}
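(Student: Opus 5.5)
The plan has three parts: regularity, concavity on $[0,b_i^*]$, and then the regions and the action inequality, which both follow from concavity. Regularity is immediate. By Lemma~\ref{lem:existence-fixed-boundary}, $V_i=\sum_\ell\omega_\ell u_i^{(\ell)}$ is $C^1$ on $[0,\infty)$, $C^3$ on $(0,b_i^*)$, and affine on $[b_i^*,\infty)$. The free-boundary condition $\Gamma_i(b^*)=0$ says $V_i''(b_i^*-)=0=V_i''(b_i^*+)$, so $V_i\in C^2((0,\infty))$.

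For \eqref{eq:vi-second-derivative-sign}, set $g_i^{(\ell)}=(u_i^{(\ell)})''$ and $w_i=V_i''=\sum_\ell\omega_\ell g_i^{(\ell)}$. Differentiating \eqref{eq:existence-derivative-system} once more shows that $g^{(\ell)}$ solves the same coupled system, with $g_j^{(\ell)}\equiv0$ where $x\ge b_j^*$. The boundary data are
\[
g_i^{(\ell)}(0)=-\gamma_i p_i^{(\ell)}(0)\le0,\qquad w_i(0)\le0,\qquad w_i(b_i^*)=0 .
\]
Since the two components carry different discount rates, $w$ does not solve a single system. It solves
\[
a_iw_i''+\mu_iw_i'+q_i(w_j-w_i)-\delta_1w_i=(\delta_2-\delta_1)\,\omega_2\,g_i^{(2)} .
\]
The intended argument is to exclude a positive interior maximum of $w$ over both regimes and the ODE intervals. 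At such a maximum the left-hand side is $\le -\delta_1 w_i<0$, so it suffices to know $g_i^{(2)}\ge -c\,w_i$ there, or an equivalent sign relation.

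I expect this step to be the main obstacle. My first attempt will be to prove that the impatient component $u^{(2)}$ is concave, $g^{(2)}\le0$. The motivation is that $\Gamma_i(b^*)=0$ forces $g_i^{(1)}(b_i^*)$ and $g_i^{(2)}(b_i^*)$ to have opposite signs. The comparison $u^{(2)}\le u^{(1)}$ (a max-principle consequence of $\delta_2>\delta_1$), together with \eqref{eq:existence-boundary-identity}, should show that $g_i^{(2)}(b_i^*)\le0$. If that works, I then rewrite the equation with $\delta_2$ in place of $\delta_1$, so that the right-hand side involves $g^{(1)}$ instead, and combine the two representations at the maximum point. If a clean sign for a single component is unavailable, the fallback is a direct maximum-principle argument on the pair $(w_1,w_2)$. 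That argument would use $w_i(0)\le0$, $w_i(b_i^*)=0$, and the fact that $w_j\equiv0$ beyond $b_j^*$, and handle separately the coupled interval and the interval between the two barriers; on the latter $w_i$ solves a scalar equation with a forcing term.

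Once concavity holds, the regions follow. Since $V_i'(b_i^*)=1$ and $V_i''\le0$ on $(0,b_i^*)$, we get $V_i'\ge1$ there. Strict inequality $V_i'>1$ on $(0,b_i^*)$ comes from real analyticity of $V_i$ on each subinterval; the ODEs have constant coefficients. If $V_i'=1$ at some point inside, then $V_i''\equiv0$ on a subinterval ending at $b_i^*$, hence by analyticity on the whole subinterval. Plugging this into the ODE gives a contradiction with the boundary behaviour at $0$, namely $w_i(0)=-\sum_\ell\omega_\ell\gamma_ip_i^{(\ell)}(0)<0$, or with the neighbouring interval. On $[b_i^*,\infty)$ we have $V_i'=1$ by construction, which gives \eqref{eq:vi-regions}.

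For \eqref{eq:vi-action-inequality}, define for $x\ge b_i^*$
\[
H(x):=(\mathcal LV)_i(x)+f_{i,s}(x,0)=\mu_i+q_i\bigl(V_j(x)-V_i(x)\bigr)-\sum_\ell\omega_\ell\delta_\ell u_i^{(\ell)}(x).
\]
By the $C^2$ fit, $H(b_i^*)$ equals the left limit of the waiting-region expression, which is zero. Its derivative is
\[
H'(x)=q_i\bigl(V_j'(x)-1\bigr)-\sum_\ell\omega_\ell\delta_\ell ,
\]
and $H''=q_iV_j''\le0$ by concavity of $V_j$ on $[0,b_j^*]$; beyond $b_j^*$, $V_j$ is affine. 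So $H$ is concave on $[b_i^*,\infty)$. Differentiating the ODE for $V_i$ at $b_i^*-$ shows $H'(b_i^*+)=-a_iV_i'''(b_i^*-)$. This is $\le0$ because $V_i''\le0$ to the left of $b_i^*$ and $V_i''(b_i^*)=0$. A concave function with $H(b_i^*)=0$ and $H'(b_i^*+)\le0$ is nonpositive on $[b_i^*,\infty)$, which is \eqref{eq:vi-action-inequality}. Together with the equalities already built into \eqref{eq:special-auxiliary-system}, this yields \eqref{eq:verification-hjb}, and the regions are those of $\Xi^{b^*}$.
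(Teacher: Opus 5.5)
\paragraph{The gap.} The gap is the concavity step $V_i''\le0$, on which both your region argument and your action-inequality argument depend. The action-inequality argument itself is fine: concavity of $H$ with $H(b_i^*)=0$ and $H'(b_i^*{+})=-a_iV_i'''(b_i^*{-})\le0$ is essentially the paper's argument.

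The route you propose for concavity points the wrong way.
\begin{itemize}
\item In your $\delta_1$-representation the forcing is $\omega_2\Delta g_i^{(2)}$. At a positive maximum of $w$ you therefore need a \emph{lower} bound on $g_i^{(2)}$, so proving $g^{(2)}\le0$ would not help.
\item Moreover $g^{(2)}\le0$ is false. The derivative comparison gives $D_i=p_i^{(1)}-p_i^{(2)}>0$ on $(0,b_i^*)$ with $D_i(b_i^*)=0$. Hence $D_i'(b_i^*{-})\le0$, that is, $g_i^{(1)}(b_i^*{-})\le g_i^{(2)}(b_i^*{-})$. Together with $\omega_1g_i^{(1)}(b_i^*{-})+\omega_2g_i^{(2)}(b_i^*{-})=0$, this forces $g_i^{(1)}(b_i^*{-})\le0\le g_i^{(2)}(b_i^*{-})$.
\item Both inequalities are in fact strict. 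If $D_i'(b_i^*{-})=0$, the $D_i$-equation at $b_i^*{-}$ gives $a_iD_i''(b_i^*{-})=-\Delta-q_iD_j(b_i^*)<0$, which makes $D_i<0$ just left of $b_i^*$. So the impatient component is strictly convex at the barrier, and the sign you expect from $u^{(2)}\le u^{(1)}$ is reversed.
\item Combining the $\delta_1$- and $\delta_2$-representations at the maximum point gives nothing. They differ exactly by the identity $\Delta w_i=\Delta(\omega_1g_i^{(1)}+\omega_2g_i^{(2)})$.
\item Your fallback, a direct maximum principle for $(w_1,w_2)$ using only $w_i(0)\le0$ and $w_i(b_i^*)=0$, fails for the same reason: $w$ solves an inhomogeneous system whose forcing has no a priori sign.
\end{itemize}

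\paragraph{The missing idea.} You need to prove that the \emph{patient} component is concave, and then use the $\delta_2$-representation. The steps are:
\begin{enumerate}
\item Apply the maximum principle to $U=u^{(1)}-u^{(2)}$, whose forcing is $-\Delta u_i^{(2)}\le0$, to get $U\ge0$.
\item Apply it to $D=U'$, whose forcing is $-\Delta p_i^{(2)}<0$, to get $D_i>0$ on $(0,b_i^*)$.
\item By $\Gamma_i(b^*)=0$, this gives $g_i^{(1)}(b_i^*{-})=\omega_2D_i'(b_i^*{-})\le0$. Also $g_i^{(1)}(0)=-\gamma_ip_i^{(1)}(0)\le0$.
\item Since $g^{(1)}$ solves the homogeneous coupled system with rate $\delta_1$ (extended by $0$ beyond $b_j^*$), the maximum principle yields $g^{(1)}\le0$.
\item Now $a_iw_i''+\mu_iw_i'+q_i(w_j-w_i)-\delta_2w_i=-\omega_1\Delta g_i^{(1)}\ge0$, and the maximum principle gives $w\le0$.
\end{enumerate}

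\paragraph{Finishing strictness.} Once concavity holds, your strictness argument for $V_i'>1$ can be completed, with the second case made explicit.
\begin{itemize}
\item If $b_i^*\le b_j^*$, analyticity on $(0,b_i^*)$ propagates $V_i'\equiv1$ down to $0$. This contradicts $V_i''(0)=-\gamma_iV_i'(0)<0$.
\item If $b_j^*<b_i^*$, differentiate the $V_i$-equation on a subinterval of $(b_j^*,b_i^*)$ where $V_i'\equiv1$ and $V_j'=1$. This gives $\sum_\ell\omega_\ell\delta_\ell p_i^{(\ell)}=0$, which is impossible because the sum is at least $\delta_1V_i'=\delta_1$.
\end{itemize}
The paper obtains strictness differently, from $g^{(1)}\le0$ and $D_i>0$.
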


We first establish two lemmas. 
Set
$$
\Delta=\delta_2-\delta_1>0,\qquad
p_i^{(\ell)}=(u_i^{(\ell)})',\qquad
D_i=p_i^{(1)}-p_i^{(2)}.
$$
The affine extensions give $p_i^{(\ell)}=1$ and $D_i=0$ for $x\geq b_i^*$.

\begin{lemma}[Comparison of the different discount components]
\label{lem:vi-discount-comparison}
For each $i\in S$,
$$
u_i^{(1)}(x)\geq u_i^{(2)}(x)\geq0,\qquad x\geq0,
$$
and
\begin{equation}
\label{eq:vi-discount-order}
p_i^{(1)}(x)>p_i^{(2)}(x)>0,
\qquad 0<x<b_i^*.
\end{equation}
\end{lemma}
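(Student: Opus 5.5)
We argue with the maximum–minimum principle for the coupled systems, applied to the differences of the two discount components. The nonnegativity $u_i^{(2)}\geq0$ and $p_i^{(2)}\geq0$ is already contained in Lemma~\ref{lem:existence-uniform}. It remains to compare the components and then upgrade the inequalities to strict ones.

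\emph{Step 1: $u_i^{(1)}\geq u_i^{(2)}$.} Set $w_i=u_i^{(1)}-u_i^{(2)}$. Then $w_i(0)=0$, $w_i'(b_i^*-)=0$, and $w_i$ is constant for $x\geq b_i^*$. On $(0,b_i^*]$,
$$
a_iw_i''+\mu_iw_i'+q_i(w_j-w_i)-\delta_1w_i=-\Delta\,u_i^{(2)}\leq0 .
$$
At a negative minimum $-m$ of $w$, attained at some $x_0\in(0,b_k^*]$, we have $w_k'(x_0)=0$ and $w_k''(x_0)\geq0$; the argument at $x_0=b_k^*$ is the same left-sided Taylor argument as in Lemma~\ref{lem:existence-fixed-boundary}. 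Hence the left side is at least $\delta_1m>0$, contradicting the sign of the right side. So $w\geq0$.

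\emph{Step 2: $D_i\geq0$.} Differentiating the ODEs as in \eqref{eq:existence-derivative-system} gives, on $(0,b_i^*)$,
$$
a_iD_i''+\mu_iD_i'+q_i(D_j-D_i)-\delta_1D_i=-\Delta\,p_i^{(2)}\leq0,
$$
with $D_i(b_i^*)=0$ and $D_i\equiv0$ beyond $b_i^*$. The boundary condition at zero is Robin: $a_iD_i'(0)+\mu_iD_i(0)=0$. A negative minimum at $x=0$ would force $D_i'(0)=-\gamma_iD_i(0)>0$, so it is not a minimum. An interior negative minimum is excluded exactly as in Step 1. Hence $D\geq0$.

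\emph{Step 3: strict positivity $p_i^{(2)}>0$ on $(0,b_i^*)$.} This is the expected obstacle, since positivity must be propagated through the coupling and across the intermediate interval between the barriers. Suppose $p_i^{(2)}(x_0)=0$ at an interior point. Because $p^{(2)}\geq0$, $x_0$ is a zero minimum, so $p_i^{(2)\prime}(x_0)=0$ and $p_i^{(2)\prime\prime}(x_0)\geq0$. The derivative ODE then gives $q_ip_j^{(2)}(x_0)\leq0$, hence $p_j^{(2)}(x_0)=0$, and also $p_i^{(2)\prime\prime}(x_0)=0$.
\begin{itemize}
\item If $x_0\geq b_j^*$, then $p_j^{(2)}(x_0)=1$, which is a contradiction.
\item Otherwise both components vanish together with their first derivatives at $x_0$. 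Uniqueness for the linear coupled ODE system on the common interval forces $p^{(2)}\equiv0$ there, which contradicts $p_k^{(2)}(b_k^*)=1$ after continuation.
\end{itemize}
The same reasoning at $x_0=0$ would give $p_i^{(2)}(0)=0$ with $p_i^{(2)\prime}(0)=0$ from the Robin condition; this is not needed, since only the open interval is asserted.

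\emph{Step 4: strict positivity $D_i>0$ on $(0,b_i^*)$.} We use the same strong-maximum/uniqueness argument, now with the inhomogeneity. At an interior zero of $D_i$ we have $D_i'=0$ and $D_i''\geq0$. The ODE then yields
$$
q_iD_j(x_0)\leq-\Delta p_i^{(2)}(x_0)<0
$$
by Step 3, contradicting $D_j\geq0$. This gives \eqref{eq:vi-discount-order}.
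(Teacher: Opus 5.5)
Your overall route is the paper's. You apply the minimum principle to $w_i=u_i^{(1)}-u_i^{(2)}$ and to $D_i=w_i'$. You rule out an interior zero of $p_i^{(2)}$ by uniqueness for the coupled initial value problem, and an interior zero of $D_i$ by the strictly negative inhomogeneity $-\Delta p_i^{(2)}$. Steps~1, 3 and~4 are correct; the paper merely proves $p_i^{(\ell)}>0$ first.

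The gap is the treatment of $x=0$ in Step~2. From the Robin condition, $D_i(0)<0$ indeed forces $D_i'(0)=-\gamma_iD_i(0)>0$. But a positive derivative at the \emph{left} endpoint means $D_i(x)>D_i(0)$ for small $x>0$. So $x=0$ is then a strict local minimum of $D_i$ on $[0,\infty)$, the opposite of what you claim. The Robin condition here has the wrong sign for a boundary minimum principle: the outward normal derivative $-D_i'(0)=\gamma_iD_i(0)$ has the same sign as $D_i(0)$. It therefore cannot exclude a negative minimum at the boundary. Since $D_i'(0)\neq0$ there, the equation at $0$ yields no second-order contradiction either.

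The repair uses your Step~1, exactly as the paper does. Since $w_i\geq0$ and $w_i(0)=0$, you get $D_i(0)=w_i'(0)=\lim_{x\downarrow0}w_i(x)/x\geq0$. Together with $D_i(b_i^*)=0$ and $D_i\equiv0$ for $x\geq b_i^*$, any negative minimum of $D$ must be interior, and it is excluded as in Step~1. With this change your argument is complete.
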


\begin{proof}
Lemma~\ref{lem:existence-uniform} gives $u_i^{(\ell)}\geq0$ and $p_i^{(\ell)}\geq0$.
We first exclude an interior zero of $p_i^{(\ell)}$.
If $p_i^{(\ell)}(x_0)=0$ for some $0<x_0<b_i^*$, then \eqref{eq:existence-derivative-system} gives
$$
(p_i^{(\ell)})'(x_0)=0,\qquad
0=a_i(p_i^{(\ell)})''(x_0)+q_ip_j^{(\ell)}(x_0).
$$
Both terms in the last equality are nonnegative, thus $p_j^{(\ell)}(x_0)=0$.
Using $p_j^{(\ell)}=1$ for $x\geq b_j^*$, we must have $x_0<b_j^*$, and then $(p_j^{(\ell)})'(x_0)=0$ as well.
Uniqueness for the coupled initial value problem with initial at $(p_i^{(\ell)}(x_0),(p_i^{(\ell)})'(x_0),p_j^{(\ell)}(x_0),(p_j^{(\ell)})'(x_0))=(0,0,0,0)$ forces both components $p_i^{(\ell)}$ to vanish on $(0,\min\{b_1^*,b_2^*\})$, which gives a contradiction. 
Thus $p_i^{(\ell)}>0$ on $(0,b_i^*)$.

Put $U_i=u_i^{(1)}-u_i^{(2)}$.
Subtracting the two equations in \eqref{eq:special-fixed-boundary} gives
$$
a_iU_i''+\mu_iU_i'+q_i(U_j-U_i)-\delta_1U_i
=-\Delta u_i^{(2)}\leq0,\qquad 0<x<b_i^*.
$$
Here $U_i(0)=0$, $U_i'(b_i^*{-})=0$, and $U_i$ is constant after $b_i^*$.
The similar minimum argument in Lemma~\ref{lem:existence-fixed-boundary} therefore gives $U_i\geq0$.

Using $D_i=U_i'$, we have $D_i(0)\geq0$ and $D_i(b_i^*)=0$.
Differentiating the equation for $U_i$ gives
\begin{equation}
\label{eq:vi-derivative-difference}
a_iD_i''+\mu_iD_i'+q_i(D_j-D_i)-\delta_1D_i
=-\Delta p_i^{(2)}<0,\qquad 0<x<b_i^*.
\end{equation}
The same minimum argument gives $D_i\geq0$, and we exclude an interior zero because the right-hand side is strictly negative.
This proves \eqref{eq:vi-discount-order}.
\end{proof}

\begin{lemma}[Signs of the second derivatives]
\label{lem:vi-second-derivatives}
For each $i\in S$,
$$
(u_i^{(1)})''(x)\leq0,\qquad V_i''(x)\leq0,
\qquad 0\leq x\leq b_i^*.
$$
\end{lemma}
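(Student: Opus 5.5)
The plan is to apply the maximum--minimum argument of Lemma~\ref{lem:existence-fixed-boundary} to the second derivatives. Write $w_i^{(\ell)}=(u_i^{(\ell)})''=(p_i^{(\ell)})'$ on $(0,b_i^*)$ and extend $w_i^{(\ell)}=0$ for $x>b_i^*$, consistent with $p_i^{(\ell)}=1$ there. By Lemma~\ref{lem:existence-fixed-boundary}, $u_i^{(\ell)}\in C^3((0,b_i^*))$, so we may differentiate \eqref{eq:existence-derivative-system} once more. This gives
$$
a_i(w_i^{(\ell)})''+\mu_i(w_i^{(\ell)})'+q_i\bigl(w_j^{(\ell)}-w_i^{(\ell)}\bigr)-\delta_\ell w_i^{(\ell)}=0,\qquad 0<x<b_i^*,
$$
with $w_j^{(\ell)}=0$ on the part of $(0,b_i^*)$ lying beyond $b_j^*$. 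On that part the equation for $w_i^{(\ell)}$ is the scalar one obtained from \eqref{eq:existence-intermediate-derivative}. A jump of $w_j^{(\ell)}$ at $b_j^*$ is harmless: the extremum argument only uses the inequality $w_j^{(\ell)}(x_0)\le m$ at the extremal point.

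\textbf{Boundary signs.} At $x=0$, the relation $a_ip_i'(0)+\mu_ip_i(0)=0$ gives $w_i^{(\ell)}(0)=-\gamma_ip_i^{(\ell)}(0)\le0$. At $x=b_i^*$ I combine two facts. First, Lemma~\ref{lem:vi-discount-comparison} gives $D_i>0$ on $(0,b_i^*)$ and $D_i(b_i^*)=0$, hence $D_i'(b_i^*-)\le0$, i.e.\ $w_i^{(1)}(b_i^*-)\le w_i^{(2)}(b_i^*-)$. Second, the free-boundary condition \eqref{eq:special-free-boundary} reads $\omega_1w_i^{(1)}(b_i^*-)+\omega_2w_i^{(2)}(b_i^*-)=0$. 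Since $\omega_1,\omega_2>0$, these together force
$$
w_i^{(1)}(b_i^*-)\le0\le w_i^{(2)}(b_i^*-).
$$

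\textbf{Step 1: $w^{(1)}\le0$.} Suppose $m=\max_{k,x}w_k^{(1)}>0$. Since both endpoint values are $\le0$ and the extensions vanish, the maximum is attained at an interior point $x_0\in(0,b_k^*)$. There $(w_k^{(1)})'(x_0)=0$, $(w_k^{(1)})''(x_0)\le0$ and $w_j^{(1)}(x_0)\le m$, so the equation yields $0\le-\delta_1m<0$, a contradiction. Hence $(u_i^{(1)})''\le0$ on $[0,b_i^*]$.

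\textbf{Step 2: $V''\le0$.} Set $\Psi_i=V_i''=\omega_1w_i^{(1)}+\omega_2w_i^{(2)}$ and apply the operator with the \emph{larger} rate $\delta_2$. The $\ell=2$ part is annihilated, and the $\ell=1$ part produces $(\delta_1-\delta_2)\omega_1w_i^{(1)}$, so
$$
a_i\Psi_i''+\mu_i\Psi_i'+q_i(\Psi_j-\Psi_i)-\delta_2\Psi_i=-\Delta\,\omega_1w_i^{(1)}\ge0,
$$
by Step 1. The boundary data are $\Psi_i(0)\le0$ and $\Psi_i(b_i^*-)=\Gamma_i(b^*)=0$, and the extension is $\Psi_j=0$. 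At a putative positive interior maximum the left side is $\le-\delta_2m<0$, which contradicts the nonnegative right side. Thus $V_i''\le0$ on $[0,b_i^*]$.

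The step I expect to be the main obstacle is obtaining the sign of $w_i^{(1)}$ at the barrier. Neither component separately satisfies a high-contact condition there, so the sign has to be extracted from $D_i'(b_i^*-)\le0$ in Lemma~\ref{lem:vi-discount-comparison} together with the weighted condition $\Gamma_i(b^*)=0$. A second point to check is that using $\delta_2$ rather than $\delta_1$ in Step 2 is exactly what makes the right-hand side sign-definite. Finally, the extremum argument must be rechecked on the intermediate interval between $b_1^*$ and $b_2^*$, where the coupling term is zero.
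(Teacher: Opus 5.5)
Your proposal is correct and follows the paper's proof essentially step for step. It uses the same boundary signs: $(u_i^{(1)})''(b_i^*-)\le0$ is extracted from $D_i'(b_i^*-)\le0$ together with $\Gamma_i(b^*)=0$, which the paper records as the identity $(u_i^{(1)})''(b_i^*-)=(1-\theta)D_i'(b_i^*-)$. It then applies the same maximum argument to the twice-differentiated $\delta_1$-system, and the same $\delta_2$-operator to $V_i''$ with right-hand side $-\theta\Delta(u_i^{(1)})''\ge0$.

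The only detail to tighten is the one you flagged yourself, at an interior interface $x_0=b_j^*<b_i^*$. There $w_i^{(\ell)}$ is only $C^1$, so the bound $w_j^{(\ell)}(x_0)\le m$ is not enough. At a maximum at $x_0$ you should use the one-sided second derivative and the one-sided limit of the equation; the paper uses right limits.
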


\begin{proof}
On $[0,b_i^*]$, set $c_i=(u_i^{(1)})''$, with one-sided derivatives at the endpoints, and put $c_i=0$ for $x>b_i^*$.
By Lemma~\ref{lem:existence-fixed-boundary}, $c_i$ and $V_i''$ are
$C^1$ on $(0,b_i^*)$ and smooth away from $b_j^*$.
The free boundary condition and the affine extension also give $V_i''(b_i^*{-})=0=V_i''(b_i^*{+})$, thus  $V_i\in C^2((0,+\infty))$.

At zero, \eqref{eq:existence-derivative-system} gives
$$
c_i(0)=-\frac{\mu_i}{a_i}p_i^{(1)}(0)\leq0,
\qquad
V_i''(0)=-\frac{\mu_i}{a_i}V_i'(0)\leq0.
$$
Moreover, $D_i>0$ on $(0,b_i^*)$ and $D_i(b_i^*)=0$ imply $D_i'(b_i^*{-})\leq0$.
Using \eqref{eq:special-free-boundary}, we obtain
$$
c_i(b_i^*{-})=(1-\theta)D_i'(b_i^*{-})\leq0.
$$

Differentiating the fixed boundary equations twice gives
\begin{equation}
\label{eq:vi-second-derivative-system}
\begin{aligned}
&a_ic_i''+\mu_ic_i'+q_i(c_j-c_i)-\delta_1c_i=0,\\
&a_i(V_i'')''+\mu_i(V_i'')'
+q_i(V_j''-V_i'')-\delta_2V_i''
=-\theta\Delta c_i,
\end{aligned}
\end{equation}
for $0<x<b_i^*$, $x\neq b_j^*$.
The extensions $c_j=V_j''=0$ are used when $x>b_j^*$.
At an internal interface $b_j^*<b_i^*$, both $c_i$ and $V_i''$ are $C^1$, and we use the right limits of these equations.
If at a maximum there, their first derivatives vanish and their
right second derivatives are nonpositive, and therefore the same maximum argument applies.

As $c_i$ has nonpositive endpoint values, the first equation and the maximum argument give $c_i\leq0$.
The second equation then has a nonnegative right-hand side.
Together with $V_i''(0)\leq0$ and $V_i''(b_i^*)=0$, the same maximum argument gives $V_i''\leq0$.
\end{proof}

\begin{proof}[Proof of Theorem~\ref{thm:vi-verification}]
Lemma~\ref{lem:vi-second-derivatives} gives \eqref{eq:vi-second-derivative-sign} and the stated regularity.
Using $V_i'(b_i^*)=1$,
$$
V_i'(x)-1=-\int_x^{b_i^*}V_i''(y)\,dy\geq0,
\qquad 0<x<b_i^*.
$$
Suppose that $V_i'(x_0)=1$ for some $x_0\in(0,b_i^*)$.
Then $V_i'=1$ and $V_i''=0$ on $(x_0,b_i^*)$.
On each subinterval not containing $b_j^*$, the second equation in \eqref{eq:vi-second-derivative-system} gives
$$
0\geq q_iV_j''=-\theta\Delta c_i\geq0,
$$
and therefore $c_i=0$.
Continuity gives $c_i=0$ throughout $(x_0,b_i^*)$.
Using $V_i''=\theta c_i+(1-\theta)(u_i^{(2)})''=0$, both $(u_i^{(\ell)})''$ vanish there.
Then $p_i^{(\ell)}(b_i^*)=1$ gives $p_i^{(1)}=p_i^{(2)}=1$ on this interval, contradicting Lemma~\ref{lem:vi-discount-comparison}.
Thus $V_i'>1$ on $(0,b_i^*)$.
The affine extension gives $V_i'=1$ for $x\geq b_i^*$, proving \eqref{eq:vi-regions}.

It remains to prove \eqref{eq:vi-action-inequality}.
Set $\bar\delta=\theta\delta_1+(1-\theta)\delta_2$.
The weighted fixed boundary equations give $(\mathcal LV)_i+f_{i,s}(\cdot,0)=0$ on $(0,b_i^*)$ and, by continuity, also at $b_i^*$.
Differentiating these equations once and taking the left limit at $b_i^*$ gives
$$
q_i\bigl(V_j'(b_i^*)-1\bigr)-\bar\delta
=-a_iV_i'''(b_i^*{-})\leq0,
$$
where the last inequality follows from $V_i''\leq0$ and $V_i''(b_i^*)=0$.
For $x\geq b_i^*$, the affine extensions and the boundary equality give
$$
\begin{aligned}
(\mathcal LV)_i(x)+f_{i,s}(x,0)
&=\int_{b_i^*}^x
\left[q_i\bigl(V_j'(y)-1\bigr)-\bar\delta\right]dy\\
&\leq-a_i(x-b_i^*)V_i'''(b_i^*{-})\leq0,
\end{aligned}
$$
because $V_j'$ is non-increasing on $[0,+\infty)$.
This proves \eqref{eq:vi-action-inequality}.

Combining the waiting region equality, \eqref{eq:vi-regions}, and \eqref{eq:vi-action-inequality} gives
$$
\max\bigl\{(\mathcal LV)_i(x)+f_{i,s}(x,0),\,
1-V_i'(x)\bigr\}=0,\qquad x>0.
$$
Finally, \eqref{eq:special-fixed-boundary} and the ansatz give
$$
\begin{aligned}
&f_{i,s}(x,s)+(\mathcal Lf(\cdot,s))_i(x)=0,
&&0<x\leq b_i^*,\quad s\geq0,\\
&f_{i,x}(x,s)=\beta(s),
&&x\geq b_i^*,\quad s\geq0.
\end{aligned}
$$
At $b_i^*$, $f_{i,xx}$ is evaluated from the waiting region.
Thus \eqref{eq:verification-hjb} holds.
Including the absorbing boundary in the waiting region identifies
the resulting law as $\Xi^{b^*}$.
\end{proof}

The admissibility of $\Xi^{b^*}$ and the remaining conditions of Theorem~\ref{thm:verification} are verified in the next subsection.

\subsection{Admissibility and equilibrium existence}
\label{subsec:admissibility-equilibrium-existence}

Fix a free boundary pair $b^*$ from Theorem~\ref{thm:existence-free-boundaries}, and retain $u_i^{(\ell)}$,$V_i$, and $f_i$ from the preceding subsection.
By \eqref{eq:vi-regions}, the law generated by $V$ is $\widehat\Xi=\Xi^{b^*}$.
For $(x,i)\in E$, write
$$
\widehat\xi=\xi^{x,i,\widehat\Xi},\qquad
\widehat X=X^{x,i,\widehat\Xi},\qquad
\widehat\tau=\tau^{x,i,\widehat\Xi},\qquad
\bar b=\max\{b_1^*,b_2^*\}.
$$

\begin{theorem}[Existence of an equilibrium barrier law]
\label{thm:equilibrium-barrier-existence}
An equilibrium singular control law exists.
More precisely, every solution $b^*$ of \eqref{eq:special-free-boundary} defines an equilibrium barrier law $\widehat\Xi=\Xi^{b^*}$, and
\begin{equation}
\label{eq:equilibrium-barrier-representation}
\begin{aligned}
f_i(x,s)&=\mathbb E_{x,i}\!\left[
\int_{[0,\widehat\tau]}\beta(s+t)\,d\widehat\xi_t
\right],\qquad s\geq0,\\
V_i(x)&=f_i(x,0)=J(x,i;\widehat\xi),
\qquad (x,i)\in E.
\end{aligned}
\end{equation}
\end{theorem}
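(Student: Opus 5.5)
The plan is to apply Theorem~\ref{thm:verification} with $V=V^{b^*}$, $f=f^{b^*}$ and $\widehat\Xi=\Xi^{b^*}$. Theorem~\ref{thm:vi-verification} already supplies condition (b) and identifies the waiting and payout regions, so three things remain: the regularity in (a), the growth bound and transversality in (c), and admissibility of $\Xi^{b^*}$ in the sense of Definition~\ref{def:admissible-law}. The representation \eqref{eq:equilibrium-barrier-representation} is then exactly \eqref{eq:verification-representation}. Existence follows from Theorem~\ref{thm:existence-free-boundaries}, which produces at least one such $b^*$.

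\textbf{Regularity and growth.} By the ansatz, $f_i(x,s)=\sum_\ell\omega_\ell e^{-\delta_\ell s}u_i^{(\ell)}(x)$. Lemma~\ref{lem:existence-fixed-boundary} gives $u_i^{(\ell)}\in C^1([0,\infty))$, smooth on $(0,b_i^*]$ with left derivatives at $b_i^*$; this yields the required $C^{0,1}$, $C^{1,1}$ and $C^{2,1}$ regularity on $\overline{W_i}$. The identities $f_i(0,s)=0$ and $V_i=f_i(\cdot,0)$ are immediate, and $V_i\in C^2$ comes from Theorem~\ref{thm:vi-verification}. Lemma~\ref{lem:existence-uniform} gives $0\le u_i^{(\ell)}(x)\le Cx$ and $|(u_i^{(\ell)})'|\le C$. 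Hence $|V_i'|\le C$ and $|f_{i,s}|\le\delta_2 Cx$. For $x\le b_i^*$ we have $(\mathcal LV)_i=-f_{i,s}(\cdot,0)$. For $x>b_i^*$, $(\mathcal LV)_i=\mu_i+q_i(V_j-V_i)$, which is bounded by $C(1+x)$. So \eqref{eq:verification-growth} holds with $p=1$.

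\textbf{Admissibility.} This is the main step. Fix the initial state $(x,i)$. If $x>b_i^*$, pay $x-b_i^*$ at time $0$. Between regime jumps, solve the one-sided Skorokhod problem for Brownian motion with constant drift reflected at $b_{\varepsilon_t}^*$. The explicit solution $\xi_t=\sup_{r\le t}(Y_r-b^*)^+$ gives existence and uniqueness of the strong solution on each inter-jump interval. At a jump time $T_k$ to regime $j$, pay $(X_{T_k-}-b_j^*)^+$, which is the minimal lump sum required by \eqref{eq:state-reflection}. The chain has finitely many jumps on compacts a.s., so pasting the intervals gives a unique strong solution up to $\widehat\tau$. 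The state stays in $\overline{W^{b^*}}$, $X\ge0$, and $\Delta\xi_t\le X_{t-}$.

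\textbf{Finiteness and transversality.} For finiteness of $J$, note $J\le\mathbb E[\xi_{\widehat\tau}]$ fails to be useful directly. Instead, bound $\widehat X\le\bar b$ after time $0$, so that $\widehat\xi_t\le x+\bar b+\sum_k|\text{increments}|$. A cleaner route is the finite-horizon identity \eqref{eq:verification-finite-horizon}: the generalized Itô argument gives $\mathbb E\int_{[0,\rho_T]}\beta(s+t)\,d\widehat\xi_t\le f_i(x,s)\le C x$, and monotone convergence then gives $J<\infty$. That identity only uses (a) and (b) together with moment bounds. The moment bounds hold because $\mathbb E[\widehat\xi_T^m]<\infty$ on finite horizons, using $\widehat X\le\max(x,\bar b)$ and $\widehat\xi_T\le x+|\mu|T+\sup_{t\le T}|\int\sigma\,dB|$.

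For transversality, $\widehat X_{T\wedge\widehat\tau}\le\max(x,\bar b)$, so
\[
|f_{\varepsilon}(\widehat X_{T\wedge\widehat\tau},s+T\wedge\widehat\tau)|\le C\max(x,\bar b)\,e^{-\delta_1(s+T)}
\]
on $\{T<\widehat\tau\}$, which tends to $0$. On $\{\widehat\tau\le T\}$ the term vanishes because $f(0,\cdot)=0$. Thus \eqref{eq:verification-transversality} holds, and Theorem~\ref{thm:verification} concludes the proof.
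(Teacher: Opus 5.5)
Your proposal is correct and follows the paper's proof. Like the paper, you combine Theorems~\ref{thm:existence-free-boundaries} and \ref{thm:vi-verification} with the explicit pathwise Skorokhod construction, the bounds of Lemma~\ref{lem:existence-uniform} (growth with $p=1$), and $0\le\widehat X_t\le\bar b$ with $f_i(x,s)\le Cxe^{-\delta_1 s}$ for transversality, then invoke Theorem~\ref{thm:verification}. The only local difference is in proving $J<\infty$: the paper applies It\^o's formula to $e^{-\delta_1 t}\widehat X_t$ to get $J\le x+\max_k\mu_k/\delta_1$, whereas you reuse \eqref{eq:verification-finite-horizon} with $f\ge0$ to get $J\le V_i(x)\le Cx$, which is legitimate because that identity needs only (a), (b) and the moment bounds you check directly.
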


We verify the remaining conditions of Theorem~\ref{thm:verification} in the following lemma.

\begin{lemma}[Admissibility and the remaining verification conditions]
\label{lem:admissibility-verification-conditions}
The law $\widehat\Xi$ is admissible, and $(V,f)$ satisfies conditions (a) and (c) of Theorem~\ref{thm:verification}, with $p=1$ in
\eqref{eq:verification-growth}.
\end{lemma}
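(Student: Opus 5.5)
We verify, in order, the regularity and growth conditions (a) and (c) on $(V,f)$, the well-posedness of the Skorokhod problem \eqref{eq:state-reflection} for $\Xi^{b^*}$, the finiteness of $J$, and the transversality condition \eqref{eq:verification-transversality}.

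\textbf{Regularity.} By Lemma~\ref{lem:existence-fixed-boundary} each $u_i^{(\ell)}$ is $C^1$ on $[0,\infty)$, $C^3$ on $(0,b_i^*)$, smooth on the pieces determined by $b_1^*,b_2^*$, and affine beyond $b_i^*$. Hence $f_i(x,s)=\sum_\ell\omega_\ell e^{-\delta_\ell s}u_i^{(\ell)}(x)$ has the regularity required in (a). The $C^2$ property of $V_i$ on $(0,\infty)$ is Theorem~\ref{thm:vi-verification}. We also have $f_i(0,s)=0$ and $V_i=f_i(\cdot,0)$ directly from \eqref{eq:special-fixed-boundary}.

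\textbf{Growth, $p=1$.} Lemma~\ref{lem:existence-uniform} gives $0\le u_i^{(\ell)}\le Cx$ and $0\le (u_i^{(\ell)})'\le C$. It also bounds $(u_i^{(\ell)})''$ on $[0,b_i^*]$, and this second derivative vanishes beyond $b_i^*$. Therefore
\[
|V_i'|\le C,\qquad |f_{i,s}(x,s)|\le \delta_2 Cx,\qquad |(\mathcal LV)_i(x)|\le C'(1+x),
\]
uniformly in $s$. So \eqref{eq:verification-growth} holds with $p=1$.

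\textbf{Admissibility of the law.} The state process lives in $[0,\bar b]$ after the initial lump sum $(x-b_i^*)^+$. Between regime switches, the dynamics are a Brownian motion with constant drift reflected downward at a fixed level $b_k^*$. This one-sided Skorokhod problem has an explicit unique solution $\xi_t=\sup_{r\le t}(Y_r-b_k^*)^+$, pathwise in the driving path $Y$. Because $\varepsilon$ has a.s. finitely many jumps on bounded intervals, we concatenate these solutions across switching times. At a switch into regime $k$ with $X_{t-}>b_k^*$, we add the lump sum $X_{t-}-b_k^*$, as the jump rule in \eqref{eq:state-reflection} prescribes. This gives a unique strong solution up to $\widehat\tau$; strong existence and uniqueness follow from the pathwise construction. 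The constraints $X\ge0$ and $\Delta\xi_t\le X_{t-}$ are immediate.

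\textbf{Finite payoff.} We need $J<\infty$, and $\beta\le1$ means it suffices to bound $\mathbb E[\widehat\xi_\infty]$ suitably. The cleanest route is to use \eqref{eq:verification-finite-horizon} from the proof of Theorem~\ref{thm:verification}. That identity used only (a), (b) and the It\^o computation, so it holds for $(V,f)$ by Theorem~\ref{thm:vi-verification}. It gives
\[
\mathbb E\int_{[0,\rho_T]}\beta(s+t)\,d\widehat\xi_t\le f_i(x,s)\le C\max\{x,\bar b\},
\]
since $f\ge0$. Monotone convergence as $T\to\infty$ then yields finiteness.

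\textbf{Transversality (main obstacle).} We must show
\[
\mathbb E\bigl|f_{\varepsilon_{\rho_T}}(\widehat X_{\rho_T},s+\rho_T)\bigr|\to0 .
\]
For $t>0$ we have $\widehat X_t\le\bar b$, so $0\le f\le e^{-\delta_1(s+T)}C\bar b$ on $\{\widehat\tau>T\}$. On $\{\widehat\tau\le T\}$ the term is $f(0,\cdot)=0$. Hence the expectation is at most $C\bar b e^{-\delta_1 T}\to0$. The only delicate points are that the initial jump is handled, since $\widehat X_0\le\bar b$ after time $0$, and that localization in the It\^o step is legitimate. The latter follows from the boundedness of $\widehat X$ on $(0,\infty)$ together with $\mathbb E\widehat\xi_T<\infty$, which comes from the moment bound in the proof of Theorem~\ref{thm:verification}.
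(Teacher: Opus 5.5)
Your proposal is correct. Apart from one step it matches the paper: the regularity, the growth bounds with $p=1$ from Lemma~\ref{lem:existence-uniform}, the pathwise reflection concatenated across regime switches, and the transversality bound $C\bar b\,e^{-\delta_1(s+T)}$ on $\{\widehat\tau>T\}$ are exactly what the paper does.

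The difference is how you show $J<\infty$. The paper argues directly. It applies It\^o's formula to $e^{-\delta_1 t}\widehat X_t$ on $[0-,\rho_T]$ and uses $\widehat X\ge0$ and the bounded volatilities to get $\mathbb E_{x,i}\int_{[0,\rho_T]}e^{-\delta_1 t}\,d\widehat\xi_t\le x+\max_k\mu_k/\delta_1$. It then uses $\beta(t)\le e^{-\delta_1 t}$. That bound involves only the state dynamics, so admissibility of $\Xi^{b^*}$ is obtained without reference to $(V,f)$ or to Theorem~\ref{thm:vi-verification}.

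You instead reuse the finite-horizon identity \eqref{eq:verification-finite-horizon} together with $f\ge0$. This gives the sharper bound $J(x,i;\widehat\xi)\le V_i(x)\le Cx$, and with transversality it already yields the representation of $f$. The route is valid, but note two things about where that identity comes from in the paper:
\begin{itemize}
\item It is derived under the standing hypothesis that $\widehat\Xi$ is admissible.
\item Its localization is removed with the moment bound, which is stated for $\xi\in\mathcal D(x,i)$, i.e.\ for controls already known to have finite payoff.
\end{itemize}
So you should say explicitly that neither ingredient uses condition (b) of Definition~\ref{def:admissible-control}. In this model that is easy to check:
\begin{itemize}
\item The identity $\widehat\xi_{\rho_T}=x+\int_0^{\rho_T}\mu_{\varepsilon_t}\,dt+\int_0^{\rho_T}\sigma_{\varepsilon_t}\,dB_t-\widehat X_{\rho_T}$ gives $\mathbb E_{x,i}\widehat\xi_{\rho_T}\le x+T\max_k\mu_k$ directly.
\item $f$ and $f_x$ are bounded on $[0,\bar b]$, which contains $\widehat X_t$ for $t\ge0$, so the Brownian and regime-jump integrals are true martingales.
\end{itemize}
Note also that the identity uses your growth bound (c) and the Skorokhod properties of $\widehat\xi$, not only (a) and (b).

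With that remark your argument is complete. The paper's route is more elementary and keeps admissibility logically separate from the verification machinery.
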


\begin{proof}
The assertions are immediate when $x=0$.
For $x>0$, the initial payment and surplus are
$$
\Delta\widehat\xi_0=(x-b_i^*)^+,\qquad
\widehat X_0=x\wedge b_i^*.
$$
Between successive regime jumps, suppose that the current regime is $k$ and the surplus immediately after any payment at time $r$ is $y\in(0,b_k^*]$.
Until the next regime jump or bankruptcy, set
$$
\begin{aligned}
Y_t&=y+\mu_k(t-r)+\sigma_k(B_t-B_r),\\
L_t&=\sup_{r\leq v\leq t}(Y_v-b_k^*)^+,\\
\widehat X_t&=Y_t-L_t,\qquad
\widehat\xi_t=\widehat\xi_r+L_t.
\end{aligned}
$$
This is the unique Skorokhod reflection at the upper barrier $b_k^*$, stopped when the surplus reaches zero.
At a regime jump $t<\widehat\tau$, use the new regime $\varepsilon_t$ and set
\begin{equation}
\label{eq:admissibility-switch-payment}
\Delta\widehat\xi_t
=(\widehat X_{t-}-b_{\varepsilon_t}^*)^+,\qquad
\widehat X_t=\widehat X_{t-}\wedge b_{\varepsilon_t}^*.
\end{equation}
The regime process has finitely many jumps on every bounded time interval.
Concatenating these reflections and payments therefore gives a unique strong solution up to $\widehat\tau$.
After $\widehat\tau$, keep $\widehat X=0$ and $\widehat\xi$ constant.
The resulting control $\widehat\xi$ is adapted, non-decreasing, and c\`adl\`ag.

By construction,
\begin{equation}
\label{eq:admissibility-state-bound}
0\leq\widehat X_t\leq b_{\varepsilon_t}^*\leq\bar b,
\qquad t\geq0,
\end{equation}
and
$$
\int_0^{t\wedge\widehat\tau}
\mathbf1_{\{\widehat X_r<b_{\varepsilon_r}^*\}}
\,d\widehat\xi_r^c=0.
$$
Each jump is the smallest payment that brings the surplus below or to the current barrier, and $0\leq\Delta\widehat\xi_t\leq\widehat X_{t-}$.
Thus the pair satisfies \eqref{eq:state-reflection}.

To prove that the payoff is finite, put $\rho_T=T\wedge\widehat\tau$.
Integration $d(e^{-\delta_1 t}\widehat X_{t})$ on $[0-,\rho_T]$ gives
$$
\mathbb E_{x,i}\int_{[0,\rho_T]}e^{-\delta_1t}\,d\widehat\xi_t
=x+\mathbb E_{x,i}\int_0^{\rho_T}
e^{-\delta_1t}
\bigl(\mu_{\varepsilon_t}-\delta_1\widehat X_t\bigr)\,dt
-\mathbb E_{x,i}\!\left[
e^{-\delta_1\rho_T}\widehat X_{\rho_T}\right]
\leq x+\frac{\max_{k\in S}\mu_k}{\delta_1}.
$$
The Brownian integral has zero expectation because the volatilities are bounded and $\rho_T\leq T$.
Using $\beta(t)\leq e^{-\delta_1t}$, monotone convergence gives
\begin{equation}
\label{eq:admissibility-payoff-bound}
J(x,i;\widehat\xi)
\leq x+\frac{\max_{k\in S}\mu_k}{\delta_1}<\infty.
\end{equation}
Hence $\widehat\xi\in\mathcal D(x,i)$ for every $(x,i)\in E$, and $\widehat\Xi$ is admissible.

Lemma~\ref{lem:existence-fixed-boundary}, the ansatz, and Theorem~\ref{thm:vi-verification} give the regularity required in (a).
The boundary conditions follow from $u_i^{(\ell)}(0)=0$ and $V_i=f_i(\cdot,0)$.
Furthermore, Lemma~\ref{lem:existence-uniform} gives a constant $C>0$ such that
\begin{equation}
\label{eq:admissibility-candidate-bounds}
\begin{aligned}
|V_i'(x)|+|V_i''(x)|\leq C,\quad
0\leq f_i(x,s)\leq Cxe^{-\delta_1s},\quad
|f_{i,s}(x,s)|\leq Cxe^{-\delta_1s},
\quad x,s\geq0,\; i\in S.
\end{aligned}
\end{equation}
Here $V_i''(0)$ is the right derivative, and derivatives of $u_i^{(\ell)}$ at $b_i^*$ are taken from the left.
In particular, $0\leq V_i(x)\leq Cx$, and thus
$$
|(\mathcal LV)_i(x)|
\leq a_i|V_i''(x)|+\mu_i|V_i'(x)|
+q_i\bigl(|V_j(x)|+|V_i(x)|\bigr)
\leq C(1+x).
$$
Consequently, for every $T>0$,
$$
\max_{i\in S}\left\{
|V_i'(x)|+|(\mathcal LV)_i(x)|
+\sup_{0\leq s\leq T}|f_{i,s}(x,s)|
\right\}\leq C(1+x),\qquad x>0,
$$
which proves \eqref{eq:verification-growth} with $p=1$.

Finally, fix $s\geq0$.
On $\{\widehat\tau\leq T\}$, the terminal value $f_{\varepsilon_{\rho_T}}(\widehat X_{\rho_T},s+\rho_T)$ is zero.
On $\{\widehat\tau>T\}$, $\rho_T=T$, and \eqref{eq:admissibility-state-bound} and \eqref{eq:admissibility-candidate-bounds} give
$$
\mathbb E_{x,i}\!\left[
\left|f_{\varepsilon_{\rho_T}}
(\widehat X_{\rho_T},s+\rho_T)\right|
\right]
\leq C\bar b\,e^{-\delta_1(s+T)}\longrightarrow0.
$$
This proves \eqref{eq:verification-transversality} and completes
the verification of (c).
\end{proof}

\begin{proof}[Proof of Theorem~\ref{thm:equilibrium-barrier-existence}]
Theorem~\ref{thm:existence-free-boundaries} supplies a positive free boundary pair $b^*$.
Then Theorem~\ref{thm:vi-verification} verifies (b) of Theorem~\ref{thm:verification} and identifies the generated law as $\Xi^{b^*}$.
Lemma~\ref{lem:admissibility-verification-conditions} gives admissibility
and conditions (a) and (c).
Therefore, by Theorem~\ref{thm:verification}, $\Xi^{b^*}$ is an equilibrium control law, and it satisfies \eqref{eq:equilibrium-barrier-representation}.
\end{proof}

\section{Numerical analysis of equilibrium barriers}
\label{sec:numerical-illustrations}

We examine the economic implications of the two-regime model in Section~\ref{sec:special-case}. 
We first illustrate the equilibrium dividend law and the value of retained cash, then study shareholder preferences, regime transitions, and operating cash flows.
Unless stated otherwise, parameters not varied in a figure remain at their baseline values. The parameters are illustrative rather than empirical, and all comparative statics concern the displayed ranges.

\subsection{A baseline example}
\label{subsec:numerical-baseline}

Table~\ref{tab:numerical-baseline} gives the baseline parameters and equilibrium barriers. 
Surplus $X$ denotes available cash reserves, dividends record $\xi$ distributions to shareholders, and the regime $\varepsilon$ describes persistent changes in operating conditions. 
This interpretation follows the cash-reserve model of \citet{jiang2012optimal}, where cash flow also follows a regime-modulated Brownian motion.

\begin{table}[htbp]
  \centering
  \caption{Baseline parameters and equilibrium barriers.}
  \label{tab:numerical-baseline}
  \small
  \renewcommand{\arraystretch}{1.15}
  \begin{tabularx}{\linewidth}{@{}ll>{\raggedright\arraybackslash}X@{}}
    \toprule
    Parameter & Baseline value & Economic interpretation \\
    \midrule
    $\mu_1,\mu_2$ & $0.12,\ 0.06$
      & Mean net cash inflow in each regime \\
    $\sigma_1,\sigma_2$ & $0.25,\ 0.25$
      & Cash-flow volatility in each regime \\
    $q_1,q_2$ & $0.35,\ 0.25$
      & Transition intensity out of each regime \\
    $\delta_1,\delta_2$ & $0.04,\ 0.12$
      & Discount rates of the two shareholder groups \\
    $\theta$ & $0.55$
      & Evaluation weight of the more patient shareholders \\
    \midrule
    $\bar\delta$ & $0.076$
      & Initial instantaneous discount rate \\
    $\kappa,p$ & $0.60,\ 0.4167$
      & Switching speed and stationary probability of regime 1 \\
    $b_1^*,b_2^*$ & $0.806783,\ 0.858651$
      & Equilibrium dividend barriers \\
    \bottomrule
  \end{tabularx}
  \par\smallskip
  \begin{minipage}{\linewidth}
    \footnotesize
    \textit{Notes:} Time is measured in years, and cash is expressed in a
    normalized monetary unit. The derived quantities are
    $\bar\delta=\theta\delta_1+(1-\theta)\delta_2$,
    $\kappa=q_1+q_2$, and $p=q_2/(q_1+q_2)$.
  \end{minipage}
\end{table}

Regime 1 has twice the mean net cash inflow of regime 2, with the same volatility. 
Their expected durations are $1/q_1\approx2.86$ and $1/q_2=4$
years. 
We observe that the barrier of the less profitable and more persistent regime 2 is higher:
$$
b_1^*=0.806783<b_2^*=0.858651.
$$
For reserves between these barriers, the firm retains cash in regime 2 and pays the excess over $b_1^*$ in regime 1. 
The ordering may reflect the combined effects of several factors rather than a single factor, which will be demonstrated in the numerical experiments presented later. 
\begin{figure}[htbp]
  \centering

  \def\PanelAFile{surplus_path.png}
  \def\PanelBFile{dividends_path.png}

  \captionsetup[subfigure]{
    font=small,
    labelfont=normalfont,
    textfont=normalfont,
    justification=centering
  }

  \begin{subfigure}[t]{0.98\linewidth}
    \centering
    \IfFileExists{\PanelAFile}{%
      \includegraphics[width=\linewidth]{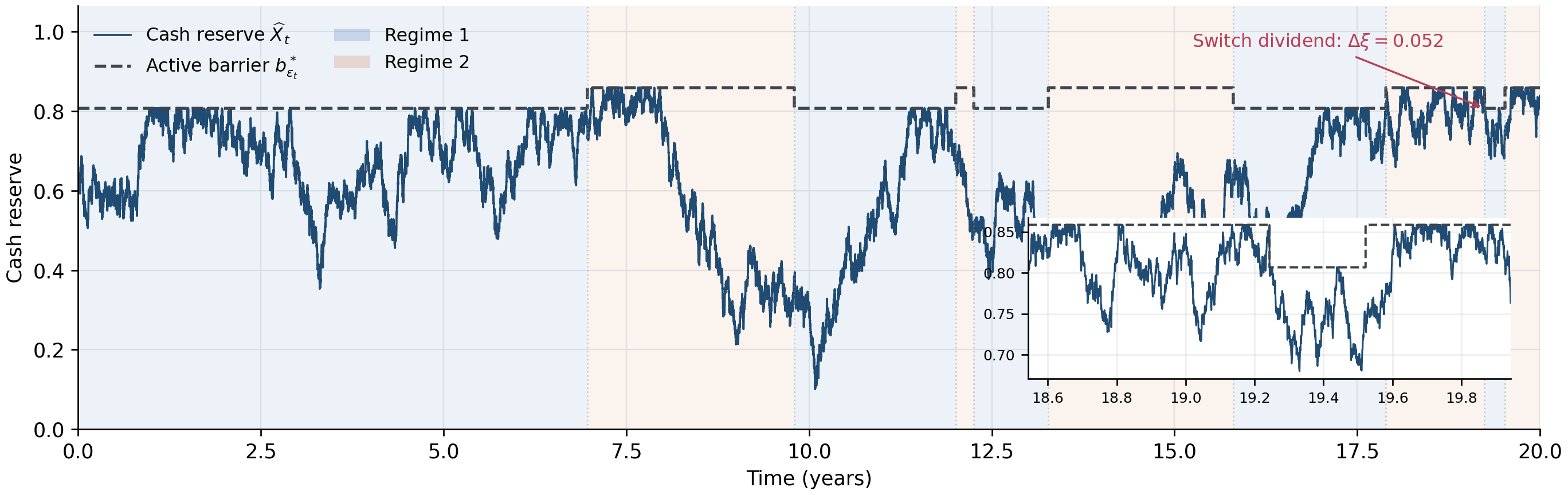}%
    }{%
      \fbox{\parbox[c][0.20\textheight][c]{0.92\linewidth}{%
        \centering First panel
      }}%
    }
    \caption{Cash reserves and the active barrier.}
    \label{fig:numerical-sample-path-reserves}
  \end{subfigure}

  \par\medskip

  \begin{subfigure}[t]{0.98\linewidth}
    \centering
    \IfFileExists{\PanelBFile}{%
      \includegraphics[width=\linewidth]{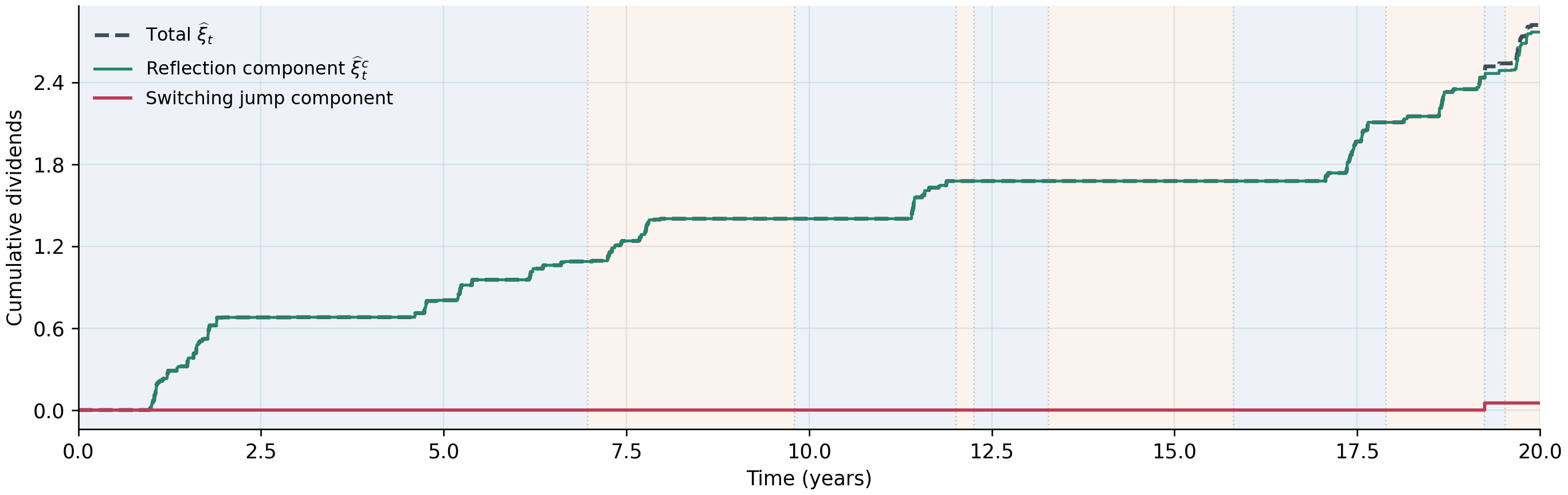}%
    }{%
      \fbox{\parbox[c][0.20\textheight][c]{0.92\linewidth}{%
        \centering Second panel
      }}%
    }
    \caption{Cumulative dividends.}
    \label{fig:numerical-sample-path-dividends}
  \end{subfigure}

  \caption{An illustrative controlled sample path.}
  \label{fig:numerical-sample-path}
\end{figure}

Figure~\ref{fig:numerical-sample-path} shows the equilibrium singular control law along a twenty-year sample path. 
In Figure~\ref{fig:numerical-sample-path}\subref{fig:numerical-sample-path-reserves}, the firm retains cash below the barrier and distributes further accumulation through downward reflection at the barrier. 
The continuous dividend component satisfies
$$
\int_0^{t\wedge\widehat\tau}
\mathbf 1_{\{\widehat X_s<b_{\varepsilon_s}^*\}}
\,d\widehat\xi_s^c=0.
$$
Comparing Figure~\ref{fig:numerical-sample-path}\subref{fig:numerical-sample-path-reserves} and \subref{fig:numerical-sample-path-dividends} makes the reflection mechanism visible. 
Continuous singular payments increase at the barrier and remain unchanged while cash is strictly below it.

A regime switch before bankruptcy can also trigger a lump-sum dividend:
$$
\Delta\widehat\xi_t
=\bigl(\widehat X_{t-}-b_{\varepsilon_t}^*\bigr)^+.
$$
The annotated switch near year~19 lowers the barrier from $b_2^*$ to $b_1^*$, producing a dividend of approximately $0.052$ and a corresponding downward jump in cash. 
Figure~\ref{fig:numerical-sample-path}\subref{fig:numerical-sample-path-dividends} separates this payment from continuous reflection payments. 
An upward barrier change always allows the cash retention;
a downward change produces a dividend payment only when cash exceeds the new barrier. 
Such switching payments also arise in the time-consistent model
of \citet{jiang2012optimal}. 

\begin{figure}[htbp]
  \centering

  \def\PanelAFile{equilibrium_value.png}
  \def\PanelBFile{marginal_value_of_cash.png}
  \def\PanelCFile{continuation_premium.png}

  \captionsetup[subfigure]{
    font=small,
    labelfont=normalfont,
    textfont=normalfont,
    justification=centering
  }

  \begin{subfigure}[t]{0.32\linewidth}
    \centering
    \IfFileExists{\PanelAFile}{%
      \includegraphics[width=\linewidth]{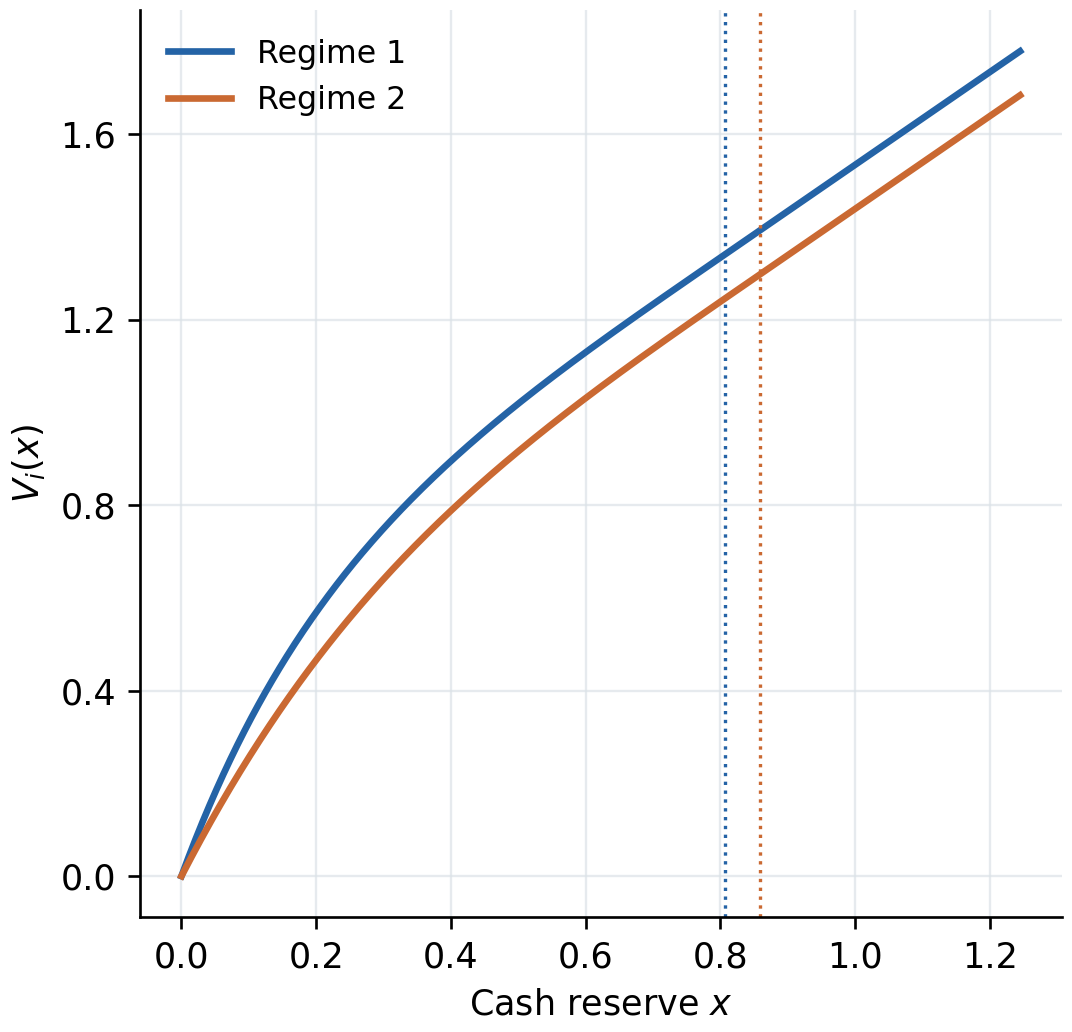}%
    }{%
      \fbox{\parbox[c][0.20\textheight][c]{0.92\linewidth}{%
        \centering First panel
      }}%
    }
    \caption{Equilibrium values $V_i(x)$.}
    \label{fig:numerical-value-functions-value}
  \end{subfigure}\hfill
  \begin{subfigure}[t]{0.32\linewidth}
    \centering
    \IfFileExists{\PanelBFile}{%
      \includegraphics[width=\linewidth]{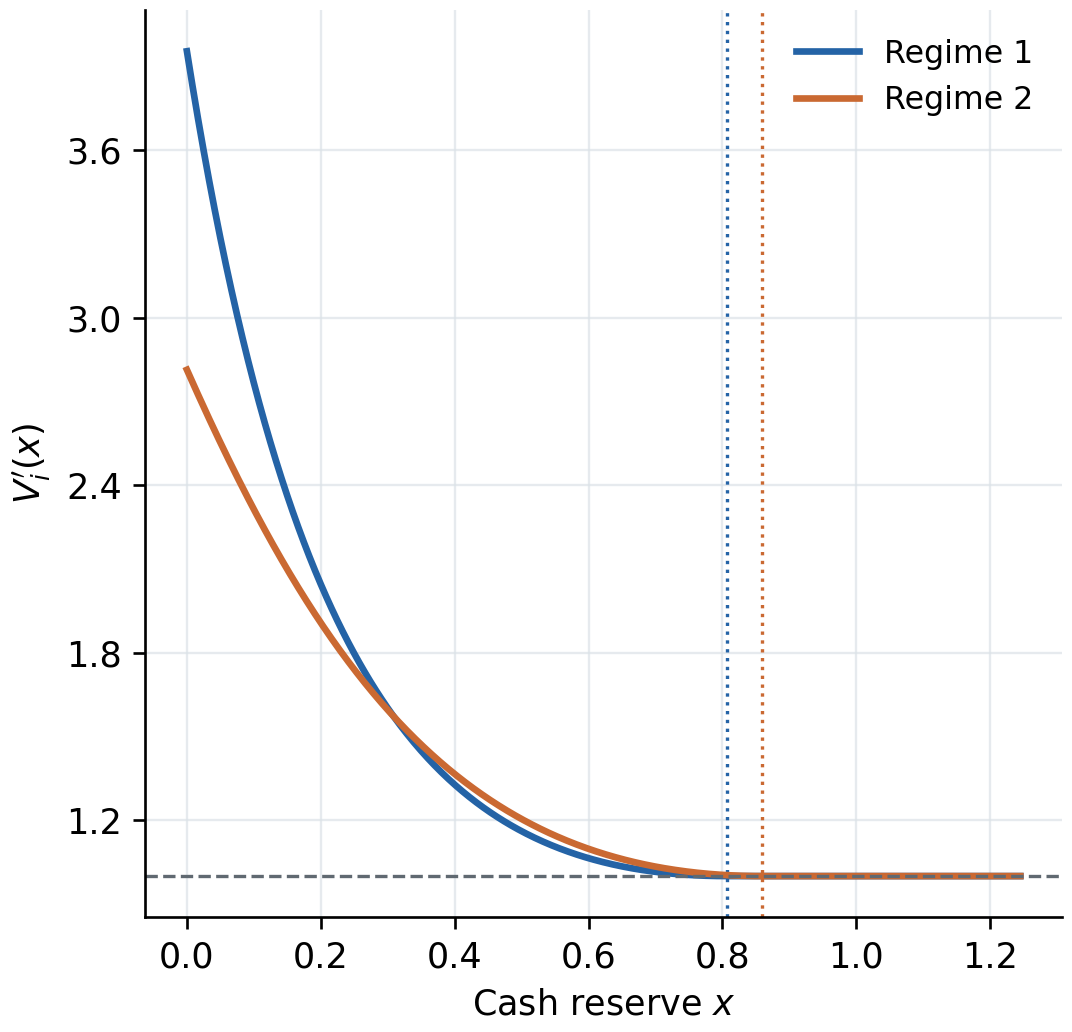}%
    }{%
      \fbox{\parbox[c][0.20\textheight][c]{0.92\linewidth}{%
        \centering Second panel
      }}%
    }
    \caption{Marginal values $V_i'(x)$.}
    \label{fig:numerical-value-functions-marginal}
  \end{subfigure}\hfill
  \begin{subfigure}[t]{0.32\linewidth}
    \centering
    \IfFileExists{\PanelCFile}{%
      \includegraphics[width=\linewidth]{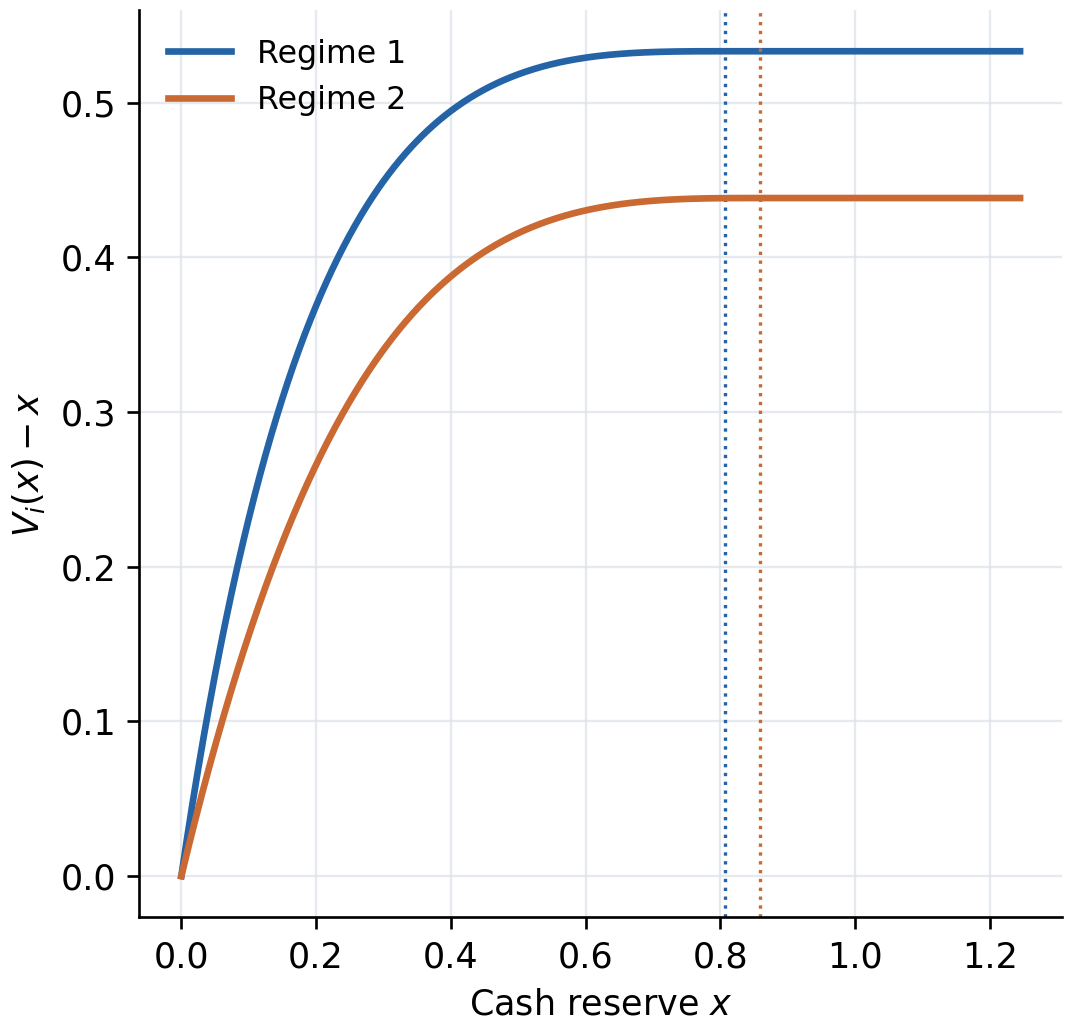}%
    }{%
      \fbox{\parbox[c][0.20\textheight][c]{0.92\linewidth}{%
        \centering Third panel
      }}%
    }
    \caption{Continuation premiums $V_i(x)-x$.}
    \label{fig:numerical-value-functions-premium}
  \end{subfigure}

  \caption{Equilibrium values.
    The subfigures show $V_i(x)$, $V_i'(x)$, and $V_i(x)-x$.
    Vertical dotted lines identify the corresponding barriers.}
  \label{fig:numerical-value-functions}
\end{figure}

Figure~\ref{fig:numerical-value-functions}\subref{fig:numerical-value-functions-value} shows that the equilibrium value increases with initial cash and is higher in regime 1 at the displayed ranges. In Figure~\ref{fig:numerical-value-functions}\subref{fig:numerical-value-functions-marginal}, the marginal value decreases toward one:
$$
V_i'(x)>1\quad\text{for }0<x<b_i^*,
\qquad
V_i'(x)=1\quad\text{for }x\geq b_i^*.
$$
Under the equilibrium law, retaining an additional unit of cash below the barrier gains more than one unit of expected discounted dividends.
At the barrier, its marginal continuation value equals its immediate payout value. 
\citet{bolton2011unified} similarly show that liquidity can preserve operating opportunities and have a marginal value exceeding one. 
Their model also includes investment, financing, and risk management while ours focuses on dividends.

Figure~\ref{fig:numerical-value-functions}\subref{fig:numerical-value-functions-premium} shows the continuation premium $V_i(x)-x$ relative to distributing all initial cash as dividends and subsequently going bankrupt. 
It increases below the barrier and is constant above it,  as $V_i(x)-x=V_i(b_i^*)-b_i^*,\qquad x\geq b_i^*$.
The plateaus are approximately $0.53$ and $0.44$ in regimes 1 and 2.

\subsection{Shareholder patience and preference heterogeneity}
\label{subsec:numerical-shareholder-preferences}

The discount function
$$
\beta(s)=\theta e^{-\delta_1s}+(1-\theta)e^{-\delta_2s}
$$
aggregates two shareholder groups' preferences with different patience.
Using $\delta_1<\delta_2$, the first group is more patient, and $\theta$ is its weight in management's evaluation.
\citet{zhao2014dividend} obtain mixtures of exponential discounting by aggregating shareholders' returns with weights equal to their shares. 
Therefore, $\theta$ can be explained as the shareholding proportion of the more patient shareholder group.
\citet{ebert2020weighted} provide an economic foundation for weighted discounting and diversity in time preferences.

\begin{figure}[htbp]
  \centering
  \IfFileExists{Shareholder_weights_and_dividend_barriers.png}{%
    \includegraphics[width=0.7\linewidth]{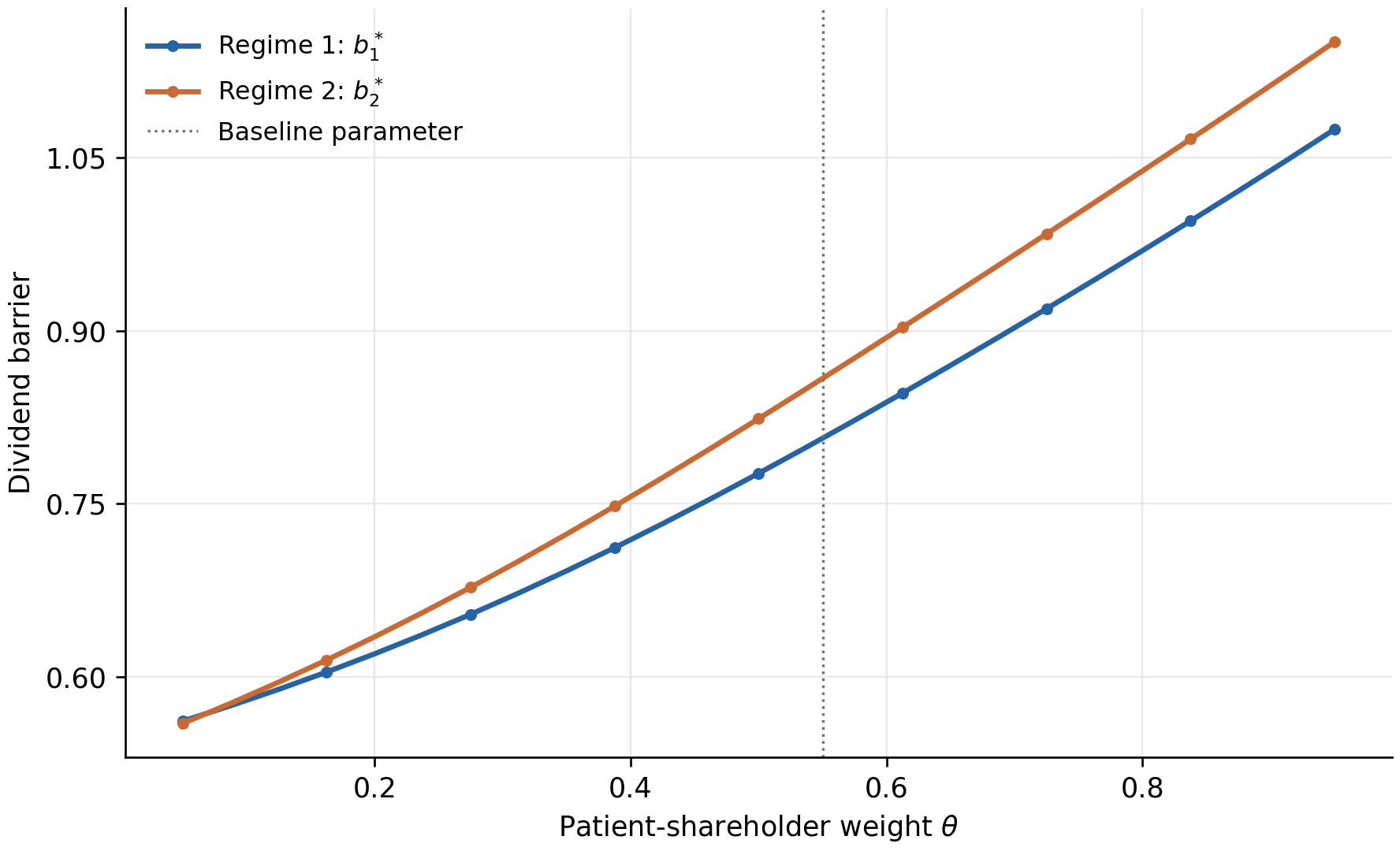}%
  }{%
    \fbox{\parbox[c][0.20\textheight][c]{0.92\linewidth}{%
      \centering
      Shareholder weights and dividend barriers\\[0.5em]
      \texttt{\detokenize{Shareholder_weights_and_dividend_barriers.png}}%
    }}%
  }
  \caption{Shareholder weights and dividend barriers. The discount rates and operating parameters remain fixed as $\theta$ varies. The dotted line marks the baseline weight $\theta=0.55$.}
  \label{fig:numerical-shareholder-weights}
\end{figure}

Figure~\ref{fig:numerical-shareholder-weights} shows that both barriers increase with the weight of patient shareholders, from approximately $0.56$ in both regimes to $1.07$ and $1.15$, respectively. 
A higher weight on future dividends expands the waiting regions and increases cash retention. \citet[Example~4.4]{zhao2014dividend} report a similar response: a greater weight on the lower discount rate raises the dividend threshold. 
Their dividend rate is bounded, whereas our model allows singular payments.

Changing $\theta$ also changes initial average impatience. 
To separate preference dispersion, Figure~\ref{fig:numerical-preference-heterogeneity} fixes $\theta=0.55$ and $\bar\delta=-\beta'(0)=\theta\delta_1+(1-\theta)\delta_2=0.076$ and varies $d=\delta_2-\delta_1$ through
\begin{equation}
\label{eq:numerical-rate-spread}
\delta_1=\bar\delta-(1-\theta)d,
\qquad
\delta_2=\bar\delta+\theta d,
\qquad
0\leq d<\frac{\bar\delta}{1-\theta}.
\end{equation}
The mean discount rate remains fixed, while its variance is
$\theta(1-\theta)d^2$.
This is the mean-preserving spread interpretation of greater group diversity in \citet{rothschild1978increasing} and \citet{ebert2020weighted}.
\begin{figure}[htbp]
  \centering

  \def\PanelAFile{Equal_initial_discount_rates.png}
  \def\PanelBFile{Barriers_under_fixed_initial_discounting.png}

  \captionsetup[subfigure]{
    font=small,
    labelfont=normalfont,
    textfont=normalfont,
    justification=centering
  }

  \begin{subfigure}[t]{0.485\linewidth}
    \centering
    \IfFileExists{\PanelAFile}{%
      \includegraphics[width=\linewidth]{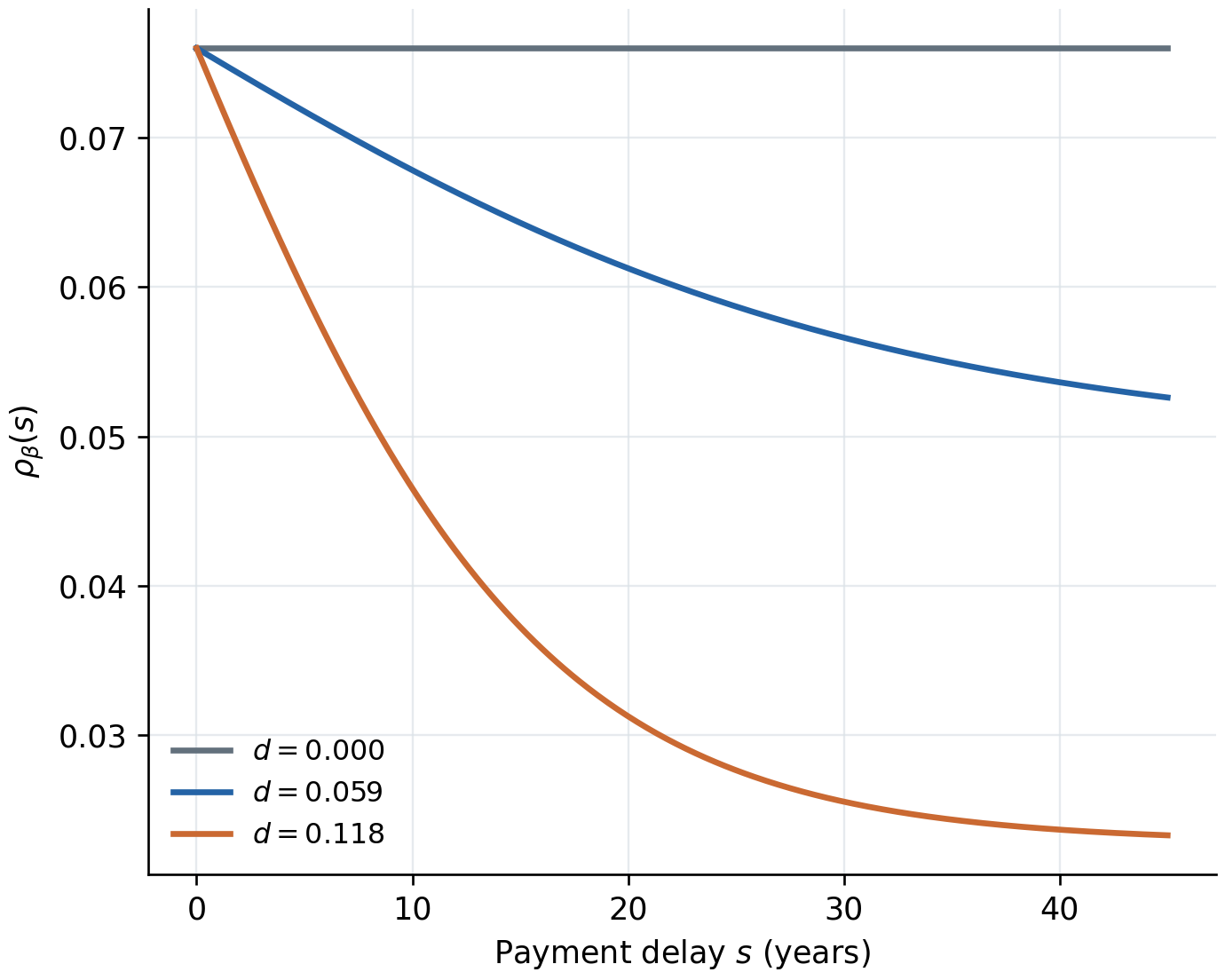}%
    }{%
      \fbox{\parbox[c][0.20\textheight][c]{0.92\linewidth}{%
        \centering First panel
      }}%
    }
    \caption{Instantaneous discount rates $\rho_\beta(s)$.}
    \label{fig:numerical-preference-heterogeneity-rates}
  \end{subfigure}\hfill
  \begin{subfigure}[t]{0.485\linewidth}
    \centering
    \IfFileExists{\PanelBFile}{%
      \includegraphics[width=\linewidth]{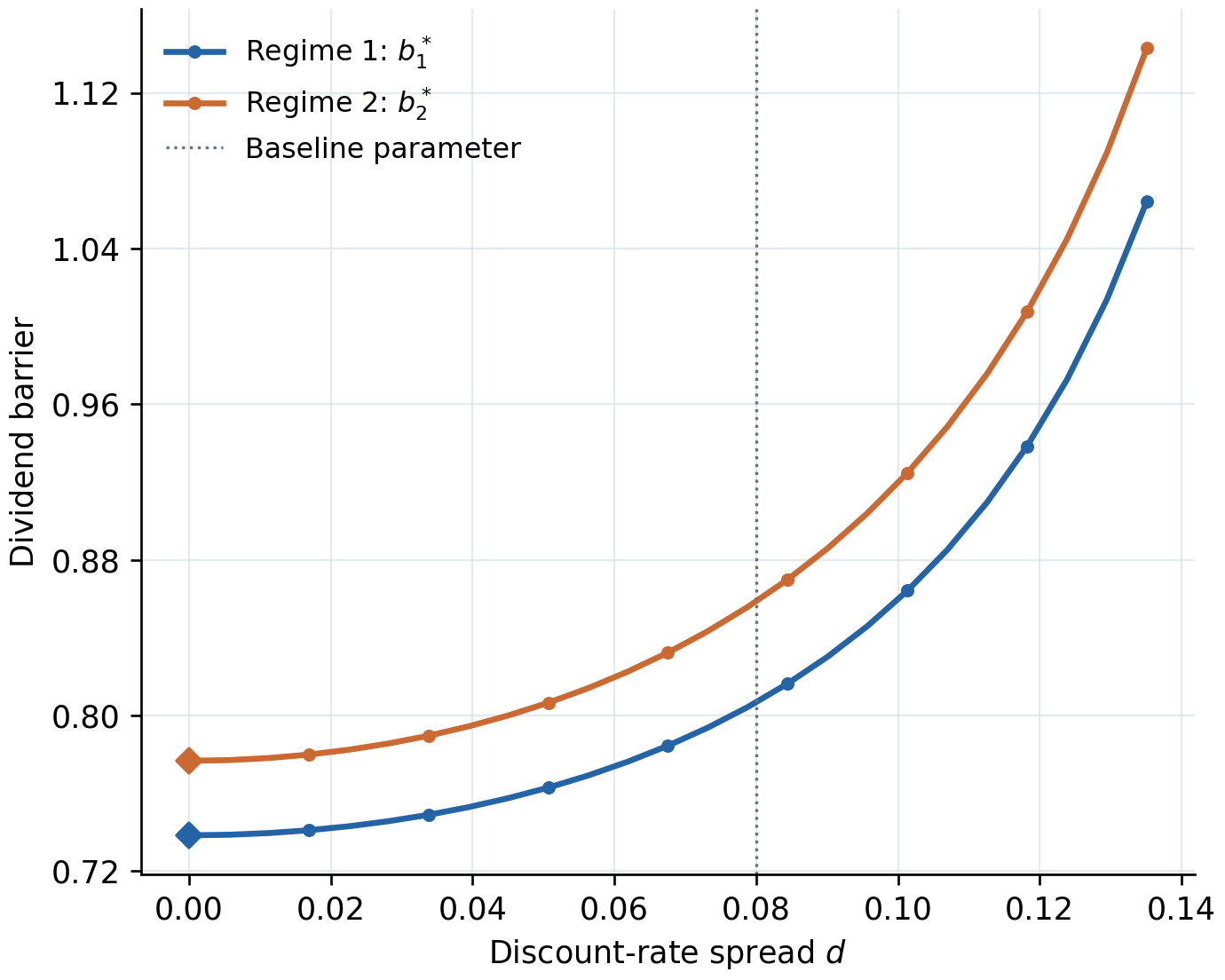}%
    }{%
      \fbox{\parbox[c][0.20\textheight][c]{0.92\linewidth}{%
        \centering Second panel
      }}%
    }
    \caption{Equilibrium dividend barriers.}
    \label{fig:numerical-preference-heterogeneity-barriers}
  \end{subfigure}

  \caption{Preference heterogeneity at a fixed initial discount rate.
    Subfigure (a) compares instantaneous discount-rate profiles.
    Subfigure (b) shows barriers as the spread varies; diamonds mark the exponential benchmark $d=0$, and the dotted line identifies
    the baseline spread $d=0.08$.}
  \label{fig:numerical-preference-heterogeneity}
\end{figure}

Figure~\ref{fig:numerical-preference-heterogeneity}\subref{fig:numerical-preference-heterogeneity-rates} plots the instantaneous discount rate
$$
\rho_\beta(s)
=-\frac{\beta'(s)}{\beta(s)}
=\frac{\theta\delta_1e^{-\delta_1s}
+(1-\theta)\delta_2e^{-\delta_2s}}
{\theta e^{-\delta_1s}+(1-\theta)e^{-\delta_2s}}.
$$
All curves start at $\bar\delta$.
The rate is constant when $d=0$ and declines toward $\delta_1$ when $d>0$, because the patient component gains weight at longer delays.
In particular,
$$
\rho_\beta(0)=\bar\delta,
\qquad
-\rho_\beta'(0)=\theta(1-\theta)d^2.
$$
With $\theta$ and $\bar\delta$ fixed, a larger $d$ also gives a lower $\rho_\beta(s)$ at every fixed delay $s>0$.
Convexity of the exponential in the discount rate gives $\beta(s)\geq e^{-\bar\delta s}$, strictly for $s>0$ and $d>0$.
Thus, equal initial impatience can coexist with different valuations of later dividends.

Figure~\ref{fig:numerical-preference-heterogeneity}\subref{fig:numerical-preference-heterogeneity-barriers} shows that both barriers increase with the spread.
The exponential benchmark $d=0$ gives barriers of approximately  $(0.738,0.777)$, compared with $(0.807,0.859)$ at the baseline spread $d=0.08$, increases of about $9\%$ and $11\%$.
This increase is intuitively consistent with Figure~\ref{fig:numerical-preference-heterogeneity}\subref{fig:numerical-preference-heterogeneity-rates}: a larger $d$ lowers instantaneous discount rates at positive delays and places greater weight on later dividends, making cash retention to sustain operations more attractive.
Using the mean discount rate would therefore understate both barriers for these parameters.

\subsection{Regime persistence and transition expectations}
\label{subsec:numerical-regime-transitions}

We separate switching speed from stationary regime composition by writing
$$
q_1=\kappa(1-p),\qquad q_2=\kappa p,
$$
where $\kappa=q_1+q_2$ and $p=q_2/(q_1+q_2)$.
The stationary distribution of regimes is $(p,1-p)$. 
At fixed $p$, increasing $\kappa$ shortens both regime durations. 
At fixed $\kappa$, increasing $p$ lowers the departure intensity from regime 1 and raises the transition intensity from regime 2 into regime 1.

\begin{figure}[htbp]
  \centering

  \def\PanelAFile{Fixed_stationary_regime_composition.png}
  \def\PanelBFile{Fixed_switching_speed.png}

  \captionsetup[subfigure]{
    font=small,
    labelfont=normalfont,
    textfont=normalfont,
    justification=centering
  }

  \begin{subfigure}[t]{0.485\linewidth}
    \centering
    \IfFileExists{\PanelAFile}{%
      \includegraphics[width=\linewidth]{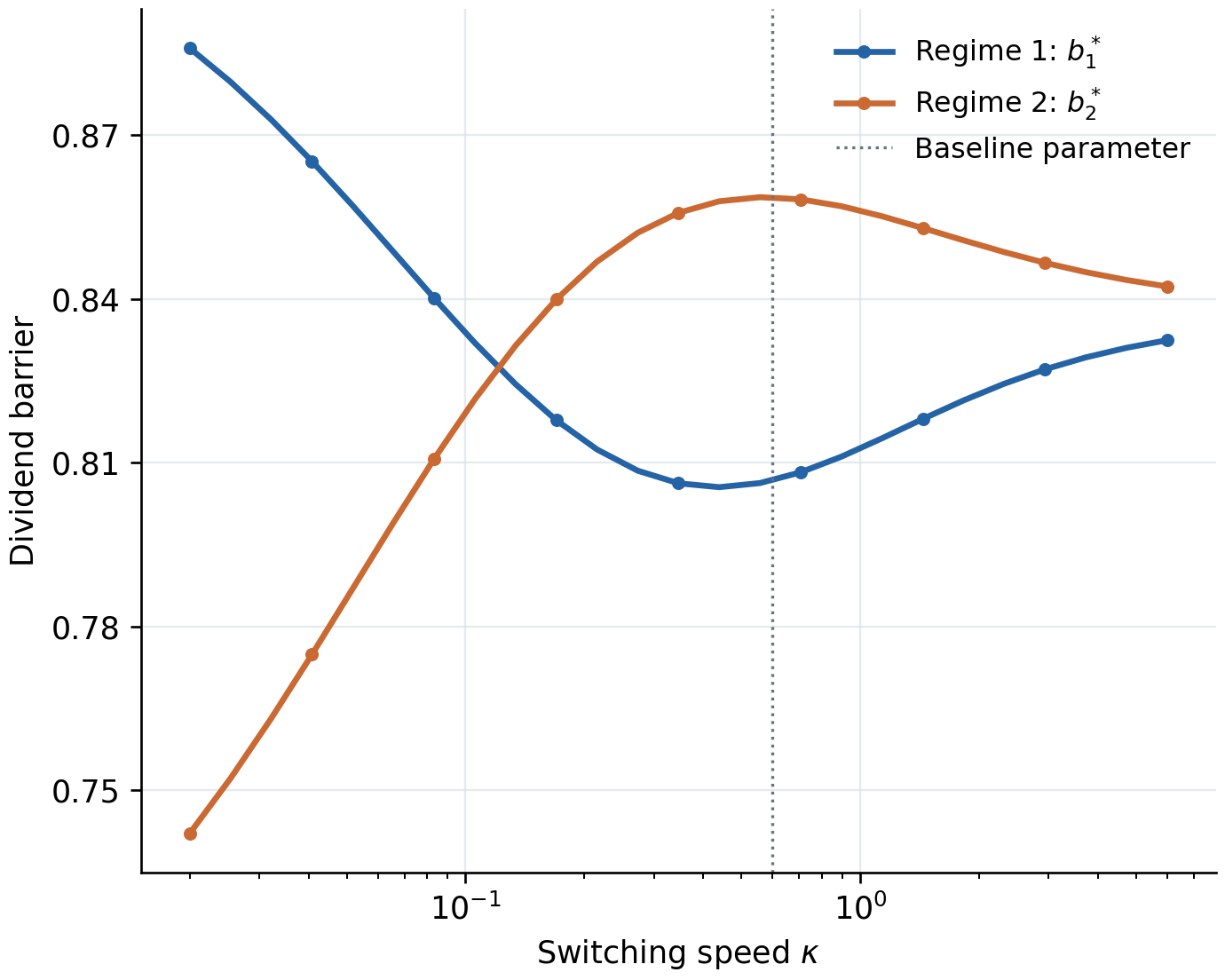}%
    }{%
      \fbox{\parbox[c][0.20\textheight][c]{0.92\linewidth}{%
        \centering First panel
      }}%
    }
    \caption{Switching speed $\kappa$.}
    \label{fig:numerical-regime-transitions-speed}
  \end{subfigure}\hfill
  \begin{subfigure}[t]{0.485\linewidth}
    \centering
    \IfFileExists{\PanelBFile}{%
      \includegraphics[width=\linewidth]{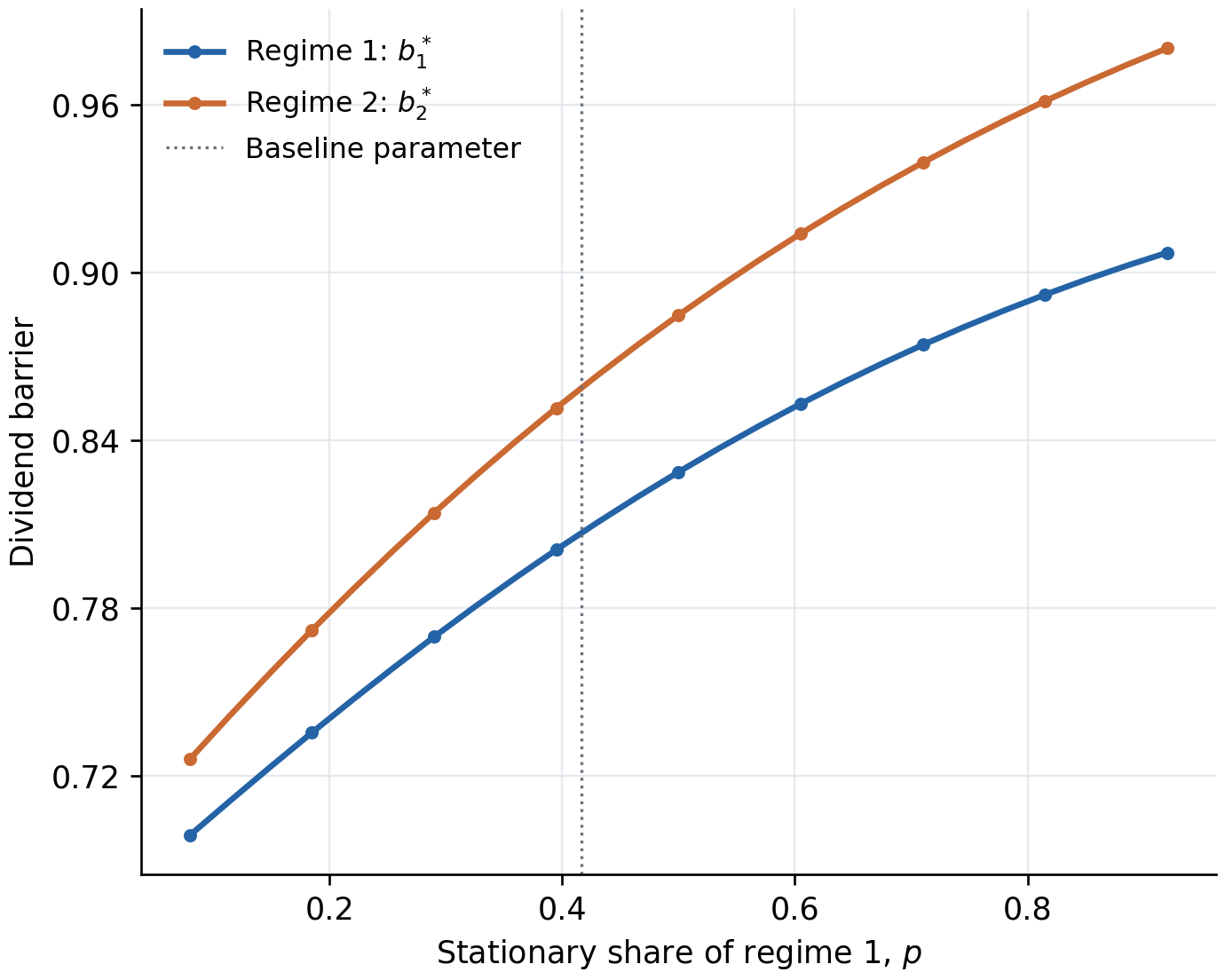}%
    }{%
      \fbox{\parbox[c][0.20\textheight][c]{0.92\linewidth}{%
        \centering Second panel
      }}%
    }
    \caption{Stationary probability $p$.}
    \label{fig:numerical-regime-transitions-probability}
  \end{subfigure}

  \caption{Regime transitions and dividend barriers.
    Subfigure (a) varies switching speed at fixed stationary composition.
    Subfigure (b) varies the stationary probability of regime 1 at fixed
    switching speed. Dotted lines mark the baseline parameters.}
  \label{fig:numerical-regime-transitions}
\end{figure}

Figure~\ref{fig:numerical-regime-transitions}\subref{fig:numerical-regime-transitions-speed} shows non-monotone responses to switching speed. The regime-1 barrier first decreases and then increases; the regime-2 barrier first increases and then decreases.
Their ordering reverses: $b_1^*>b_2^*$ under very slow switching, but $b_1^*<b_2^*$ at the baseline. 
At the largest displayed speed, the barriers are close, at approximately $0.83$ and $0.84$. 
This suggests smaller policy differences under faster switching, which is consistent with the economic intuition that fast switching shortens the expected stay in either regime, reducing the influence of the initial regime on cash retention.

The initial decline in $b_1^*$ and rise in $b_2^*$ may reflect the value of preserving profitable operating opportunities.
Faster switching shortens the firm's stay in regime 1, making cash retention to benefit from the higher profitability of regime 1 less attractive.
In regime 2, earlier access to regime 1 encourages the firm to retain cash and avoid bankruptcy before that opportunity arrives.
The later reversal may reflect precautionary cash retention.
In regime 1, the firm anticipates a more imminent transition to the less profitable regime 2.
It may therefore retain more cash to respond to lower inflows after the switch, protecting subsequent operating opportunities and raising $b_1^*$.
In regime 2, low inflows become more temporary as the firm returns more quickly to regime 1, which may reduce the need for a large reserve and lower $b_2^*$.
These findings complement \citet{jiang2012optimal}.

Figure~\ref{fig:numerical-regime-transitions}\subref{fig:numerical-regime-transitions-probability} shows that both barriers increase with $p$. 
At fixed $\kappa$, a higher $p$ prolongs regime 1 and shortens the wait to enter it from regime 2. 
Using $\mu_1>\mu_2$, cash-flow prospects improve from either initial regime. 
The higher barriers suggest that better prospects encourage cash retention to support future operations and dividends.

\begin{figure}[htbp]
  \centering
  \IfFileExists{Preference_heterogeneity_and_regime_switching.png}{%
    \includegraphics[width=0.98\linewidth]{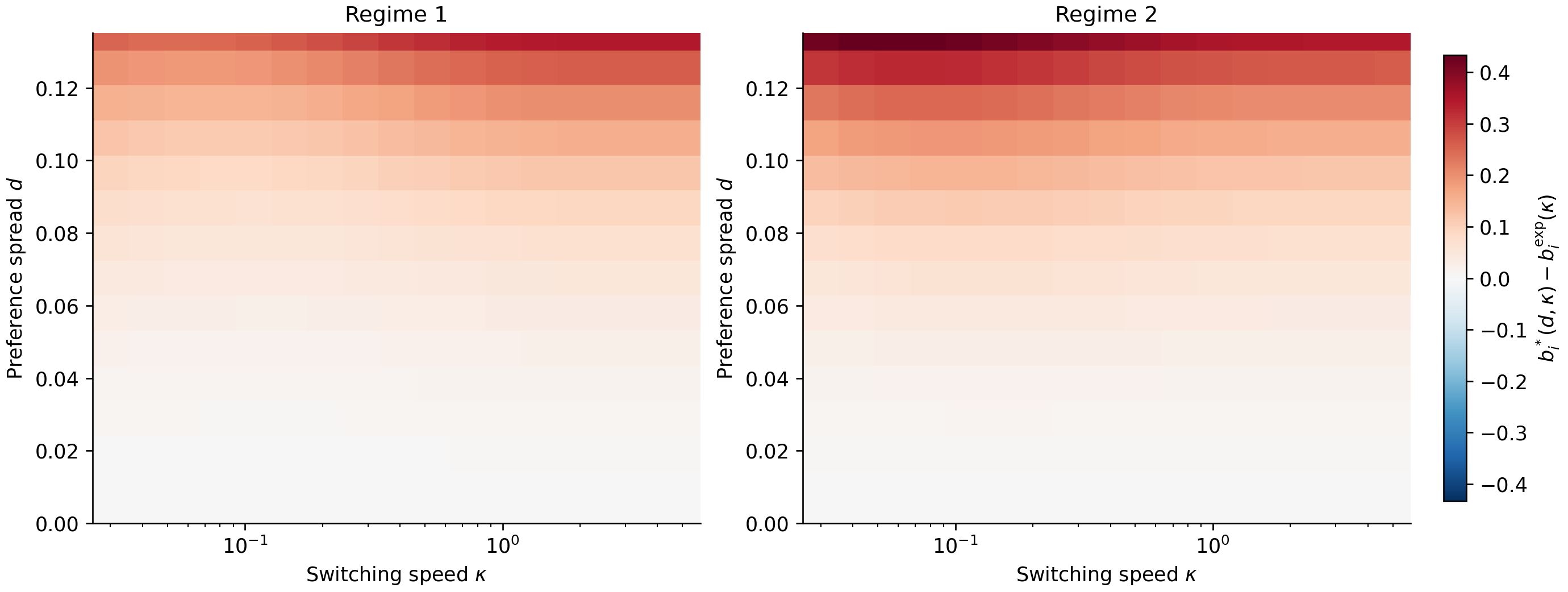}%
  }{%
    \fbox{\parbox[c][0.20\textheight][c]{0.92\linewidth}{%
      \centering
      Preference heterogeneity and regime switching\\[0.5em]
      \texttt{\detokenize{Preference_heterogeneity_and_regime_switching.png}}%
    }}%
  }
  \caption{Preference heterogeneity and regime switching. The heatmaps report the barrier differences relative to exponential discounting over discount-rate spreads and switching speeds. Both panels use the same color scale.}
  \label{fig:numerical-interaction}
\end{figure}

Figure~\ref{fig:numerical-interaction} combines preference dispersion and switching speed. 
The panels show
$$
\Delta b_i(d,\kappa)
=b_i^*(d,\kappa)-b_i^{\mathrm{exp}}(\kappa),
$$
where the exponential benchmark has the same cash-flow parameters and transition intensities, with discount rate $\bar\delta=0.076$.
The discount rates follow \eqref{eq:numerical-rate-spread}, while $\theta$ and $p$ remain at their baseline values.
The differences are zero at $d=0$ and positive at the displayed points with $d>0$.

Greater dispersion $d$ increases these barrier differences. 
At the largest spread, they are approximately $0.25$ and $0.42$ in regimes~1 and~2 under slow switching, compared with approximately $0.35$ in both regimes under faster switching. 
Faster switching thus increases the effect in regime 1 and reduces it in regime 2, bringing the two responses closer together.

Under slow switching, longer stays make future cash flows more
dependent on the initial regime, as such  preference dispersion can affect the barriers differently. 
Faster switching weakens this dependence and gives more similar operating prospects from either initial regime.
This may explain the closer responses. 
Additionally, as in Figure~\ref{fig:numerical-preference-heterogeneity}, dispersion raises barriers relative to exponential discounting even when initial average impatience is fixed.

\subsection{Cash-flow profitability and volatility}
\label{subsec:numerical-cash-flow}

Figures~\ref{fig:numerical-profitability} and~\ref{fig:numerical-volatility} vary mean net cash inflows and cash-flow volatility, respectively. 
A change in one regime can affect both barriers because payout decisions also depend on operating conditions after a regime transition.

\begin{figure}[htbp]
  \centering
  \captionsetup[subfigure]{font=small,labelfont=normalfont,textfont=normalfont,justification=centering}
  \begin{subfigure}[t]{0.485\linewidth}
    \centering
    \IfFileExists{Regime1_profitability.png}{%
      \includegraphics[width=0.98\linewidth]{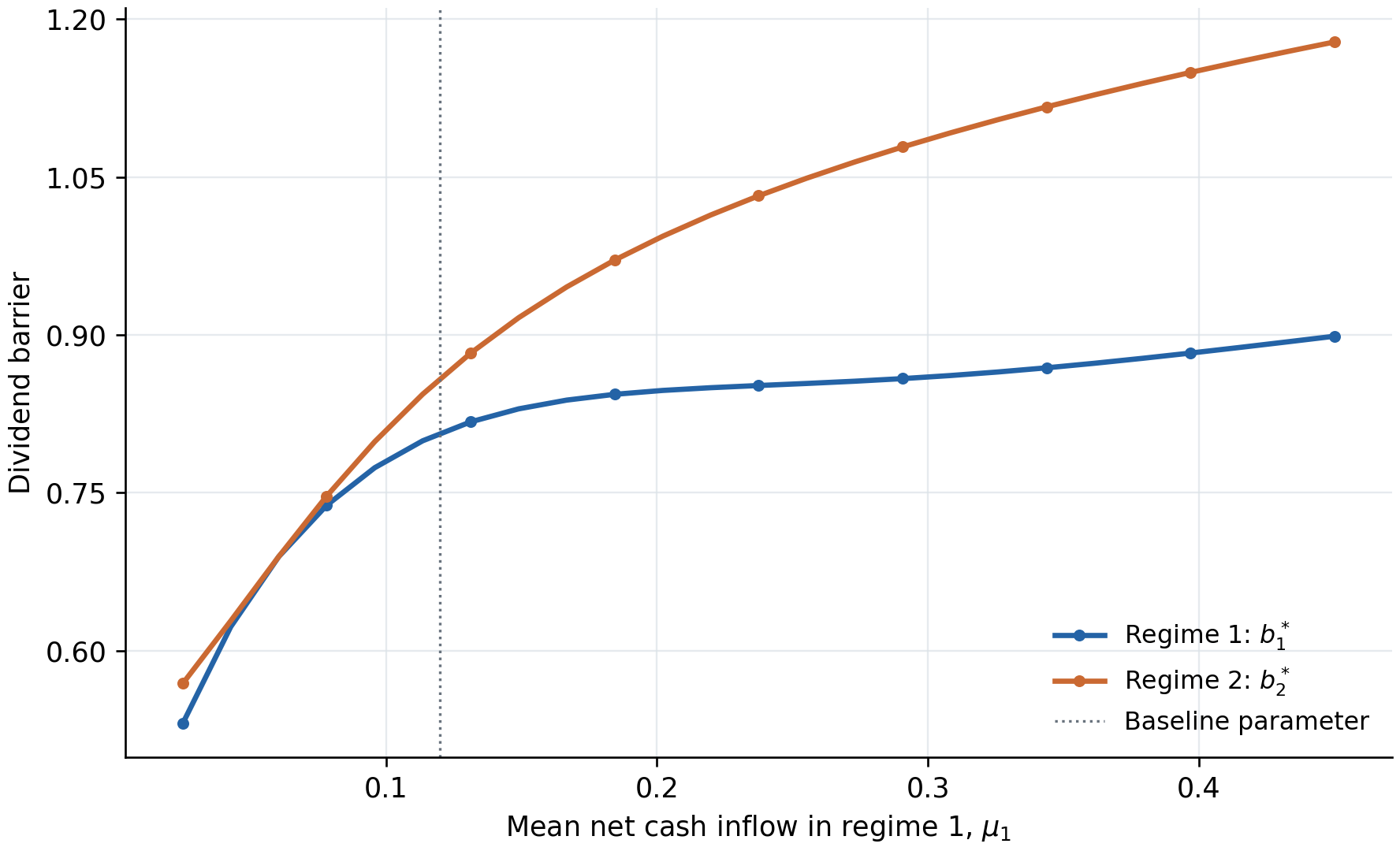}%
    }{%
      \fbox{\parbox[c][0.20\textheight][c]{0.92\linewidth}{%
        \centering
        Profitability and dividend barriers\\[0.5em]
        \texttt{\detokenize{Regime1_profitability.png}}%
      }}%
    }
    \caption{Regime 1: varying $\mu_1$.}
    \label{fig:numerical-profitability-mu1}
  \end{subfigure}\hfill
  \begin{subfigure}[t]{0.485\linewidth}
    \centering
    \IfFileExists{Regime2_profitability.png}{%
      \includegraphics[width=0.98\linewidth]{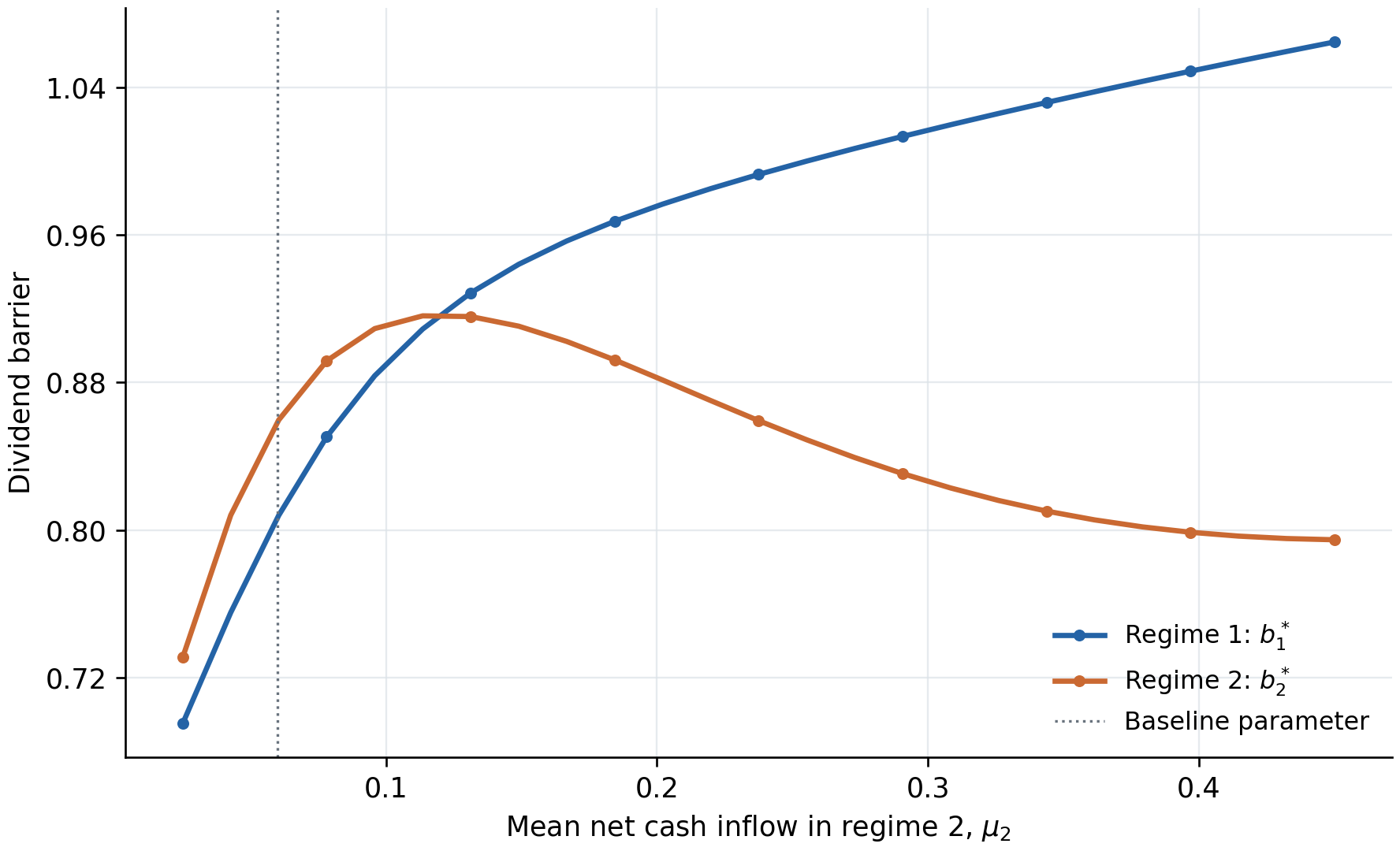}%
    }{%
      \fbox{\parbox[c][0.20\textheight][c]{0.92\linewidth}{%
        \centering
        Profitability and dividend barriers\\[0.5em]
        \texttt{\detokenize{Regime2_profitability.png}}%
      }}%
    }
    \caption{Regime 2: varying $\mu_2$.}
    \label{fig:numerical-profitability-mu2}
  \end{subfigure}
  \caption{Profitability and equilibrium dividend barriers.
    Subfigure (a) varies $\mu_1$, and subfigure (b) varies $\mu_2$.
    All remaining parameters are held at their baseline values.
    Vertical dotted lines mark the baseline value of the varied parameter.}
  \label{fig:numerical-profitability}
\end{figure}

Figure~\ref{fig:numerical-profitability}\subref{fig:numerical-profitability-mu1} shows that both barriers increase with $\mu_1$ over the displayed range.
The barrier of regime 1 rises from approximately $0.53$ to $0.90$, rapidly at lower drift levels and more slowly thereafter. 
The barrier of regime 2 rises from approximately $0.57$ to $1.18$, with substantial increases also at higher $\mu_1$. 
The response of $b_2^*$, despite unchanged $\mu_2$ and $\sigma_2$, shows how the prospect of entering a more profitable regime affects cash retention.

Figure~\ref{fig:numerical-profitability}\subref{fig:numerical-profitability-mu2} shows a different response to $\mu_2$. 
The barrier of regime 1 rises from approximately $0.70$ to $1.06$. 
The barrier of regime 2 first rises from approximately $0.73$ to $0.92$, peaks around $\mu_2=0.12$, and then declines to approximately $0.80$. 
The barriers cross near $\mu_2=0.12$; beyond this point, further increases lower $b_2^*$ and raise $b_1^*$.

Two competing considerations can help interpret these responses. Higher mean cash inflows make reserves easier to replenish, which can reduce the incentive to maintain a large cash reserve. 
At the same time, higher mean cash inflows improve future dividend opportunities, making it more attractive to retain cash that helps the firm continue operating. 
The rising barriers in Figure~\ref{fig:numerical-profitability}\subref{fig:numerical-profitability-mu1} are consistent with the
latter consideration. 
The declining part of $b_2^*$ in Figure~\ref{fig:numerical-profitability}\subref{fig:numerical-profitability-mu2} is consistent with the former consideration once profitability of regime 2 is sufficiently high. 
Meanwhile, the continued increase in $b_1^*$ suggests that preserving cash to reach the more profitable regime remains attractive.

\citet[Section~5.1.1, Table~2]{jiang2012optimal} also find that both barriers initially increase with drift and subsequently decrease in their numerical example. 
Thus, profitability need not have a monotone effect on dividend barriers. 
Here, both barriers increase with $\mu_1$, whereas increasing $\mu_2$ raises $b_1^*$ and produces a non-monotone response in $b_2^*$. 
These findings apply to the examined ranges; in particular, $b_1^*$ may decline at values of $\mu_1$ beyond those displayed.

\begin{figure}[htbp]
  \centering
  \captionsetup[subfigure]{font=small,labelfont=normalfont,textfont=normalfont,justification=centering}
  \begin{subfigure}[t]{0.485\linewidth}
    \centering
    \IfFileExists{Regime1_volatility.png}{%
      \includegraphics[width=0.98\linewidth]{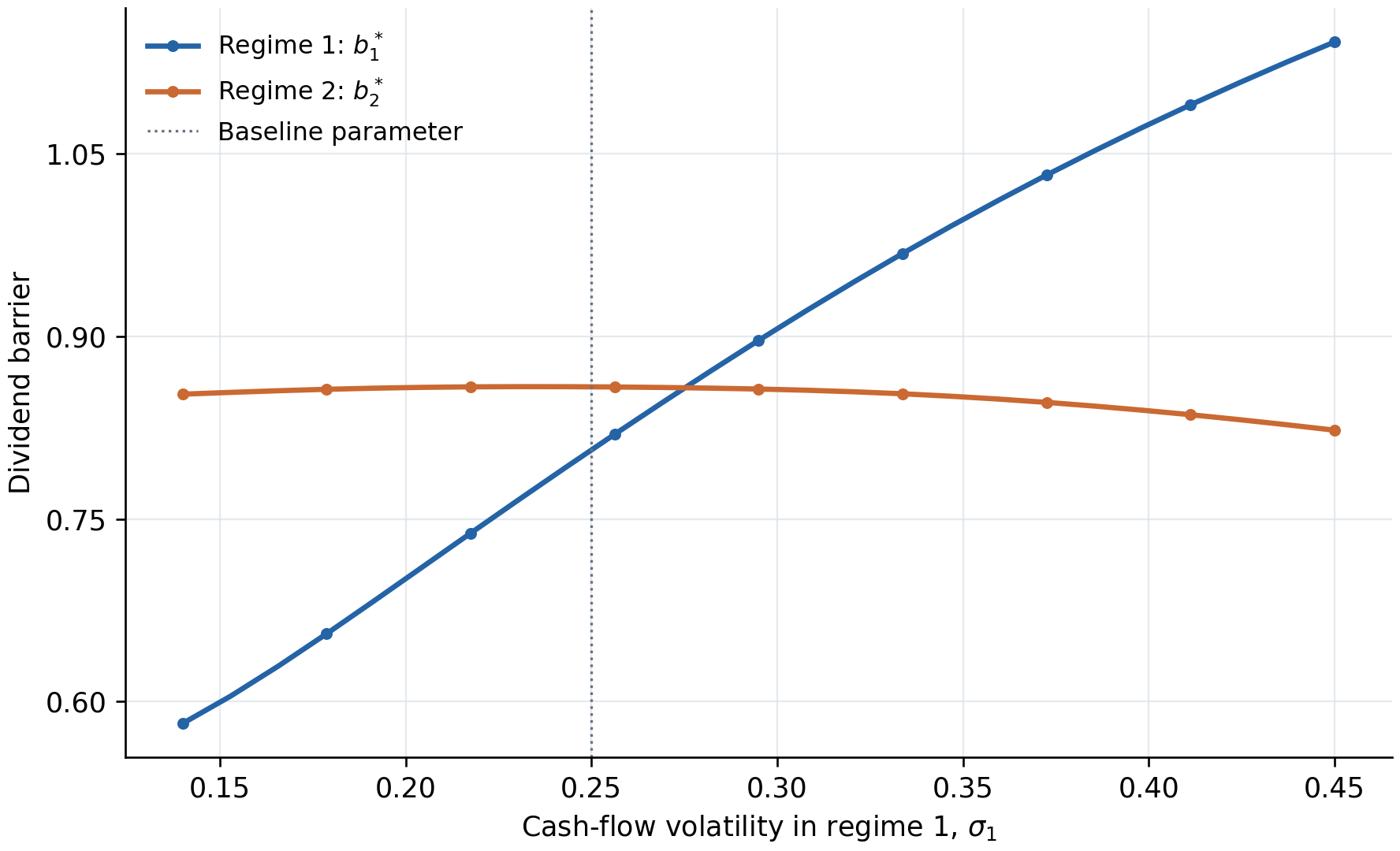}%
    }{%
      \fbox{\parbox[c][0.20\textheight][c]{0.92\linewidth}{%
        \centering
        Volatility and dividend barriers\\[0.5em]
        \texttt{\detokenize{Regime1_volatility.png}}%
      }}%
    }
    \caption{Regime 1: varying $\sigma_1$.}
    \label{fig:numerical-volatility-sigma1}
  \end{subfigure}\hfill
  \begin{subfigure}[t]{0.485\linewidth}
    \centering
    \IfFileExists{Regime2_volatility.png}{%
      \includegraphics[width=0.98\linewidth]{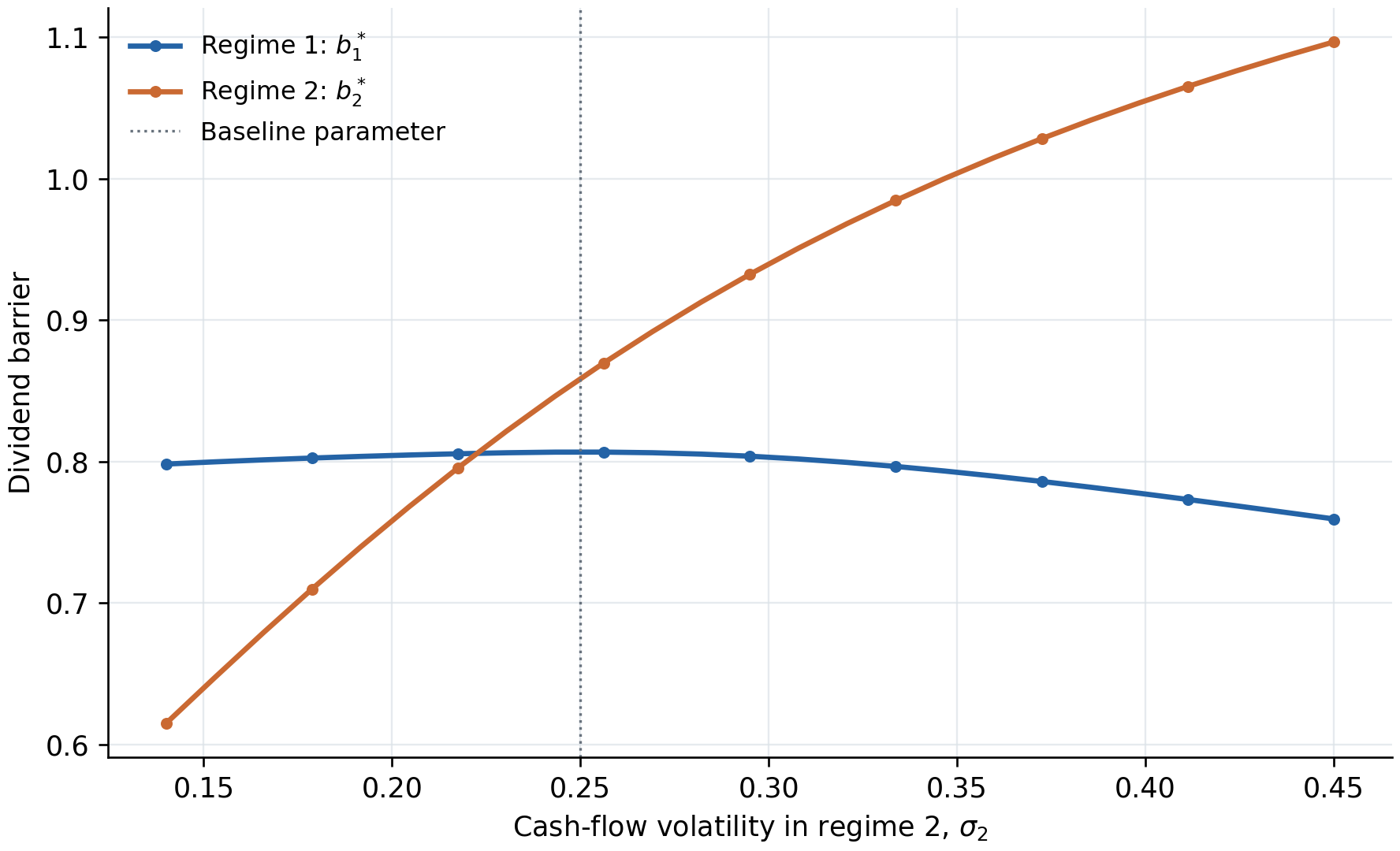}%
    }{%
      \fbox{\parbox[c][0.20\textheight][c]{0.92\linewidth}{%
        \centering
        Volatility and dividend barriers\\[0.5em]
        \texttt{\detokenize{Regime2_volatility.png}}%
      }}%
    }
    \caption{Regime 2: varying $\sigma_2$.}
    \label{fig:numerical-volatility-sigma2}
  \end{subfigure}
  \caption{Cash-flow volatility and equilibrium dividend barriers.
    Subfigure (a) varies $\sigma_1$, and Subfigure (b) varies $\sigma_2$.
    All remaining parameters are held at their baseline values.
    Vertical dotted lines mark the baseline value of the varied parameter.}
  \label{fig:numerical-volatility}
\end{figure}

Figure~\ref{fig:numerical-volatility}\subref{fig:numerical-volatility-sigma1} shows that increasing $\sigma_1$ raises the barrier in regime 1 substantially, from approximately $0.58$ to $1.14$. 
The response of $b_2^*$ is smaller: it rises slightly, is almost flat near the baseline, and then declines. 
The barriers cross at approximately $\sigma_1=0.28$.
Figure~\ref{fig:numerical-volatility}\subref{fig:numerical-volatility-sigma2} shows a similar pattern with the regimes reversed. 
Increasing $\sigma_2$ raises $b_2^*$ from approximately $0.62$ to $1.10$, while $b_1^*$ first rises slightly to about $0.81$ and then falls to approximately $0.76$. The barriers cross near $\sigma_2=0.22$.
In both experiments, the larger barrier adjustment occurs in the regime whose cash flows become more uncertain.

These increases are consistent with precautionary
cash retention: larger reserves can absorb adverse shocks and preserve future dividend opportunities. 
\citet{opler1999determinants} find that firms with riskier cash flows hold higher cash ratios, which provides evidence for this interpretation. 

\citet[Section~5.1.1, Table~2]{jiang2012optimal} report increases in both barriers at four volatility values, with a smaller change in the regime whose volatility is unchanged. 
These observations do not establish general monotonicity.
Our figures also show a smaller change in the regime whose volatility is unchanged, although its barrier is nonmonotone.

\bibliographystyle{apalike}
\bibliography{wpref}
\end{document}